\newtheorem{theorem}{Theorem}[section]
\newtheorem{lemma}[theorem]{Lemma}
\newtheorem{corollary}[theorem]{Corollary}
\newtheorem{definition}[theorem]{Definition}
\newtheorem{remark}[theorem]{Remark}
\DeclareMathOperator{\Int}{int}
\DeclareMathOperator{\dist}{d}
\DeclareMathOperator{\diam}{diam}
\DeclareMathOperator{\htop}{h_{top}}
\DeclareMathOperator{\Rec}{Rec}
\DeclareMathOperator{\End}{End}
\DeclareMathOperator{\Per}{Per}
\title[Entropy in hyperspace]{On recurrence and entropy in hyperspace of continua in dimension one}
\subjclass{37E25, 54F16.}
\keywords{Graph map, topological entropy, recurrence, hyperspace map.}
\author[D. Jeli\'c]{Domagoj Jeli\'c}
\address[D. Jeli\'c]
{Faculty of Science, University of Split, Ru\dj era Bo\v{s}kovi\'ca 33, 21000 Split, Croatia
}
\author[P. Oprocha]{Piotr Oprocha}
\address[P. Oprocha]{AGH University of Science and Technology, Faculty of Applied
	Mathematics, al.
	Mickiewicza 30, 30-059 Krak\'ow, Poland
	-- and --
	Centre of Excellence IT4Innovations - Institute for Research and Applications of Fuzzy Modeling, University of Ostrava, 30. dubna 22, 701 03 Ostrava 1, Czech Republic.}
\date{\today}
\begin{document}
\begin{abstract}
	We show that if $G$ is a topological graph, and $f$ is continuous map, then the induced map $\tilde{f}$	
	acting on the hyperspace $C(G)$ of all connected subsets of $G$ by natural formula $\tilde{f}(C)=f(C)$ carries the same entropy as $f$.
	This is well known that it does not hold on the larger hyperspace of all compact subsets. Also negative examples were given for the hyperspace $C(X)$ on some continua $X$, including dendrites.
	
	Our work extends previous positive results obtained first for much simpler case of compact interval by completely different tools.
\end{abstract}
\maketitle
\section{Introduction}

	Whenever we are given a selfmap $f$ of a compact metric space $X$,
we can associate with it induced mappings $\overline{f}$ and $\tilde{f}$ on the hyperspace $2^X$ of compact subsets of $X$ and the hyperspace $C(X)$ of continua in $X$, respectively, both defined in a natural way.
	If $2^X$ is endowed with the so-called \emph{Hausdorff metric} (see below) then $\overline{f}$ and its restriction $\tilde{f}$ are continuous. Both of them also have richer dynamics than $f$, which can be viewed as their subsystem.
 	Then a natural question which arises immediately,  is to what extent the dynamical properties of the base map $f$ and these maps are similar. 
 	The first study of this relation between \emph{individual} and \emph{collective} dynamics dates back to 1975 and was done by Bauer and Sigmund in~\cite{Bauer}.
 	Since then, a lot of progress regarding this topic has been made by many authors (e.g. see~\cite{KOCSF, Banks, Lampart}).
  	In the present paper, we will be interested in the relation between topological entropies of the base map $f$ and of its induced map on continua $\tilde{f}$.
	The main aim of this paper is proving the following theorem.

\begin{theorem}\label{tm:entropy}
	Let $(G,f)$ be dynamical system on a topological graph and let $(C(G),\tilde{f})$ be the dynamical system induced by $f$ on the hyperspace of all connected sets. Then $\htop(f)=\htop(\tilde{f}).$
\end{theorem}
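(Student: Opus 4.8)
The plan is to prove the two inequalities separately; the inequality $\htop(f)\le\htop(\tilde f)$ is routine, while $\htop(\tilde f)\le\htop(f)$ carries the whole difficulty, so we may assume $\htop(f)<\infty$ (otherwise the first inequality already gives the claim). For the easy direction, observe that $x\mapsto\{x\}$ embeds $G$ into $C(G)$ isometrically with respect to the Hausdorff metric $d_H$ and commutes with the dynamics, since $\tilde f(\{x\})=f(\{x\})=\{f(x)\}$. Its image, the set of singletons, is a closed $\tilde f$-invariant copy of $(G,f)$ inside $(C(G),\tilde f)$, so monotonicity of entropy under subsystems yields $\htop(f)\le\htop(\tilde f)$.

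For the reverse inequality I would exploit the combinatorial rigidity of subcontinua of a graph. Fix a finite vertex set and edge set for $G$. Every subcontinuum $D\subseteq G$ is itself a subgraph, and an elementary analysis of how $D$ meets each edge shows that $D$ has a uniformly bounded number of free (dangling) endpoints: each edge contributes at most two dangling ends, so this number is at most a constant depending only on $G$. Consequently $D$ is determined by its \emph{combinatorial type} --- which vertices and whole edges it contains, and from which endpoints its dangling arcs emanate --- drawn from a fixed finite set of types, together with the positions in $G$ of its finitely many free endpoints. This gives a decomposition of $C(G)$ into finitely many finite-dimensional cells, and, crucially, a subcontinuum is pinned down up to Hausdorff distance $\epsilon$ once its type and the $\epsilon$-approximate positions of its free endpoints are known. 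It is exactly this bounded complexity that fails for dendrites, which explains the known negative examples there.

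To bound the entropy I would estimate, for fixed $\epsilon>0$, the maximal cardinality of an $(n,\epsilon)$-separated set of subcontinua and compare it with the corresponding quantity $\Sep(n,\epsilon',f)$ for points of $G$. Encoding the orbit $(\,f^k(C)\,)_{0\le k<n}$ by the sequence of combinatorial types of $f^k(C)$ together with the $\epsilon'$-approximate positions of their free endpoints, I would show that two subcontinua with the same code are $(n,\epsilon)$-close, and then count the admissible codes. Each free endpoint of $f^k(C)$ is a point of $G$ lying on the orbit of $C$, so the metric part of the code amounts to tracking a bounded number of point-orbits at scale $\epsilon'$; the goal is to dominate the number of codes by a bounded power of $\Sep(n,\epsilon',f)$ times a factor that is subexponential in $n$.

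The main obstacle is that the free endpoints of $f^k(C)$ are \emph{not} the $f$-images of the free endpoints of $f^{k-1}(C)$: applying $f$ to a subcontinuum can annihilate old endpoints and create new ones at the folds (turning points) of $f$, exactly as the endpoints of $f^k([a,b])$ are interior extrema of $f^k$ rather than $f(a),f(b)$ in the interval model. Thus one cannot simply push a fixed finite set of marked points forward along the orbit, and both the metric positions of the dynamically generated endpoints and the evolution of the combinatorial types must be controlled simultaneously. The key to overcoming this is that the proliferation of new endpoints is governed by the growth of the number of folds of $f^n$, whose exponential rate is precisely $\htop(f)$ by Misiurewicz--Szlenk-type results for one-dimensional maps, while the finitely many combinatorial types, drawn from a fixed finite alphabet and constrained by how $f$ acts on subgraphs, contribute no extra exponential growth. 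Assembling these two facts into the clean bound $\limsup_n\frac1n\log(\text{number of codes})\le\htop(f)$ is the delicate heart of the proof.
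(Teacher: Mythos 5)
Your easy direction ($\htop(f)\le\htop(\tilde f)$ via the singleton embedding) is correct and is exactly what the paper does. The reverse inequality, however, is only a strategy sketch, and its core step does not go through as stated. First, the Misiurewicz--Szlenk identification of $\htop(f)$ with the exponential growth rate of the number of folds of $f^n$ is a theorem about \emph{piecewise monotone} maps; a general continuous graph map need not be piecewise monotone, its iterates can have infinitely many turning points, and no bound of the form ``the number of free endpoints of $f^n(C)$ grows at rate $\htop(f)$'' is available in this generality. Second, even granting such a bound, the count of admissible codes is not carried out: the sequence of combinatorial types alone ranges a priori over $T^n$ words (an exponential factor with rate $\log T$ which you assert, but do not show, collapses), and the free endpoints of $f^k(C)$ are of the form $f^k(x_k)$ for points $x_k\in C$ that \emph{change with} $k$, so they are not forward orbits of a bounded set of marked points and cannot be charged to $\Sep(n,\epsilon',f)$ by ``tracking a bounded number of point-orbits.'' You acknowledge this yourself by calling the assembly of these estimates ``the delicate heart of the proof''---that heart is missing, so the proposal does not constitute a proof.

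For comparison, the paper takes an entirely different route that avoids counting altogether. It first proves a structure theorem for $\Rec(\tilde f)$ (Theorem~\ref{thm:rec-cont}): every recurrent element of $C(G)$ is a singleton over a recurrent point of $f$, a periodic continuum, or one of the special continua $\mathcal{R}$ attached to circumferential sets (almost 1--1 extensions of irrational rotations). By the variational principle, $\htop(\tilde f)$ is computed over $\omega$-limit sets of recurrent continua; periodic continua contribute zero entropy, the circumferential pieces contribute zero by Lemma~\ref{lem:circ} (a semiconjugacy onto the induced map of a rotation on $C(\mathbb{S}^1)$ with uniformly bounded fibers over recurrent continua), and what remains is the copy of $(G,f)$ on singletons, yielding $\htop(\tilde f)\le\htop(f)$. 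If you want to salvage your counting approach, you would at a minimum have to restrict to piecewise monotone maps and then justify an entropy-approximation argument for the induced systems, which is itself nontrivial.
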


	The analogous results to the one stated have been obtained for simpler spaces, first for compact intervals and later for topological trees by Matviichuk (\cite{MatviiInterval, Matviichuk}).
	A simplified version of Theorem~\ref{tm:entropy} was proved for the highly restricted case of transitive graph maps in~\cite{KOCSF} (cf. \cite{KOCSF}) but the proof for general graph maps has not been provided so far.
	It is important to note that this property holds essentially due to dimension one, 
as for higher dimensional spaces there are simple examples of base maps having zero entropy whose induced maps on continua have infinite entropy. 
	Even for some one-dimensional spaces like dendrites, such examples have been constructed (see~\cite{Dendrite1} or~\cite{Dendrite2}).
	It is also well known that the analogous statement for the induced maps of hyperspace of all compact subsets does not hold, since for each map $f$ having positive topological entropy, the entropy of $\overline{f}$ is infinite~\cite{KOCSF}. On the other hand, if we consider the space of all probability invariant measures in weak* topology, then the entropy does not increase \cite{Bauer}.

		Usually, when one passes from interval or tree ambient to topological graphs, they encounter various difficulties which emerge as a consequence of existence of circles.
	The first is the fact that if two different points of a graph are given, generally there is no unique minimal arc connecting them, unlike for trees.
	The second is the phenomenon of irrational rotation and its extensions which act by far differently than any tree map.
	
%	These circumstances made us use different tools than the ones presented in the proofs of the analogous statements for simpler spaces.
%	We use the well known fact that topological entropy is concentrated on the set of recurrent points of a system.
%	Each nondegenerated recurrent continuum which is not acted upon solely by certain extensions of irrational rotation is shown to be periodic.
%	We prove it essentially by observing \emph{$\omega$-limit points}, i.e. limit points of trajectories, in graph maps which had been very well researched by Blokh in his Decomposition Theorem presented in~\cite{B1}.
%	Each such continuum is proved to contain all the $\omega$-limit points of its elements under some iterate of the base map.
%	Then, by a standard method of collapsing invariant sets, we shrink the parts of those continua which carry all those $\omega$-limit points to a singleton.
%	This way we prove Lemma~\ref{lem:periodic}.
%	The remaining continua are then observed separately in Lemma~\ref{lem:circ}.
	
	To prove the main theorem, we first investigate the structure of recurrent points of $\tilde f$. As a first important step, we provide full characterization of the set $\Rec(\tilde f)$
	in Theorem~\ref{thm:rec-cont}. We obtain that the most of nondegenerate recurrent continua are periodic, except possibly some of them related to irrational rotations. This allows us to localize maps with positive entropy, which by variational principle reveals the value of topological entropy.
	
 The paper is organized as follows: In Section~\ref{sec:preliminaries} we recall basic definitions and some properties of graph maps and hyperspace dynamics.
	Section~\ref{sec:recurrent-vs-limitsets} is devoted to some auxiliary results used in Section~\ref{sec:characterization} where we provide characterization of the set of recurrent continua.
	Finally, in Section~\ref{sec:proof} we prove the main result of this paper.

\section{Preliminaries}\label{sec:preliminaries}

\subsection{Topological graphs}
	A \emph{topological graph} (or short \emph{graph}) is a %nondegenerate 
	continuum $G$ 
	 such that there is a one dimensional simplical complex $K$ whose geometric carrier $|K|$ is homeomorphic to $G$.
	 If $G$ is a singleton then we call it a degenerate graph.
	
	If $G$ is a graph, then there is a finite set $V\subset G$ such that each connected component of $G\setminus V$ is homeomorphic to an open interval.
	We can assume that $V$ is minimal subset of $G$ with this property, i.e. that for each $v\in V$ and each connected neighborhood $U$ of $v$ in $G$, such that $U\cap V=\{v\},$
	$U\setminus \{v\}$ has either one or at least three components.  
	Having said that, each element of $V$ will be called \emph{vertex} and
	closure in $G$ of each component of $G\setminus V$ will be called an \emph{edge} of $G$.
	Notice that each edge is either homeomorphic to a closed interval or to the circle $\mathbb{S}^1$.
	Edges homeomorphic to $\mathbb{S}^1$ will be called \emph{loops} of $G$.
	A \emph{branching point} of $G$ is a point in $G$ having no neighborhood homeomorphic to an interval.
	It is clear that each branching point is an element of $V$.
	Elements of $V$ different from branching points will be called \emph{endpoints} of $G$.
	We denote the set of all endpoints of $G$ by $\End(G)$.
	Moreover, every nondegenerate subcontinuum of $G$ will be called a \emph{subgraph}.

\begin{remark}\label{rem:ambient}
	From this moment on we identify $G$ with a geometric carrier of simplicial complex defining it.
	Strictly speaking, we assume that graph $G$ is a subset of $\mathbb{R}^3,$ its edges lie on straight lines
	and on each edge we have a metric assigning to it the length one.
		Moreover, the metric $d$ on $G$ now can be chosen in a way that $d(x,y)$ equals the length of the shortest path between $x$ and $y.$
		Induced topology matches the original subspace topology of $G$.
	\end{remark}
%---- basic notation-- rewrite  ----

\subsection{Dynamical systems}
	A (discrete, topological) \emph{dynamical system} is a pair $(X,f)$ where $X$ is a compact metric space and $f\colon X\to X$ is a continuous map.
	We will often identify dynamical system $(X,f)$ with the map $f$.
	Any continuous map $f\colon G\to G$, where $G$ is a graph, will simply be called a \emph{graph map}.
	For a point $x\in X$, we define its \emph{orbit} as the set $\mathcal{O}(x)=\{f^n(x)\colon n\geq 0\}$ and its \emph{trajectory} as the sequence $\left(f^n(x)\right)_{n\geq 0}$.
	Analogously, the \emph{orbit of a set} $A\subset X$ is the set $\mathcal{O}(A)=\bigcup_{n=0}^{\infty}f^n(A).$
	A point $x\in X$ is said to be \emph{periodic} if there is some $p>0$ such that $f^p(x)=x.$
	The smallest such $p$ is called the \emph{period} of $x$.
	We denote the set of all periodic points of $f$ by $\Per(f)$.
	A point $x\in X$ is said to be \emph{eventually periodic} if there is some $n\geq 0$ such that $f^n(x)\in\Per(f).$ 
	Equivalently, a point is eventually periodic if and only if it has finite orbit.

	If $(X,f)$ and $(Y,g)$ are two dynamical systems, a \emph{semi-conjugacy} between $f$ and $g$ is any surjective continuous map $\phi\colon X\to Y$ such that $\phi\circ f=g\circ \phi$.
	If in addition $\phi$ is bijective, and hence a homeomorphism, then it is a \emph{conjugacy} between $f$ and $g$ and we say that these two dynamical systems are \emph{conjugate}.

	A set $A\subset X$ is said to be {\emph{$f$-invariant}} if $f(A)\subset A$ and {\emph{strongly $f$-invariant}} if $f(A)=A.$
	A closed invariant set $M\subset X$ without closed and invariant proper subset is called \emph{minimal}.
	It is well known that closed invariant set $M\subset X$ is minimal if and only if the orbit of each point $x\in M$ is dense in $M$.	
	If $X$ is minimal, we say that dynamical system $(X,f)$ is minimal or, simply,  
	that $f$ is \emph{minimal map}.
	If $f^n$ is minimal for all $n>0$, $f$ is said to be \emph{totally minimal}.
	
	Map $f\colon X\to X$ is \emph{transitive} if for every pair of nonempty open subsets $U, V\subset X$ there is some $n>0$ such that $f^n(U)\cap V\neq\emptyset$.
	If $X$ does not contain isolated points, then $f$ is transitive if and only if there is $x\in X$ whose orbit is dense in $X$.	
	A map $f\colon X\to X$ is \emph{mixing} if for every pair of nonempty open subsets $U, V\subset X$ there is an $N\geq 0$ such that $f^n(U)\cap V\neq \emptyset$ for all $n\geq N$.	
	It is clear that each mixing map is transitive but not vice-versa.
	For a point $x\in X$, the limit set of its trajectoy is called \emph{$\omega$-limit set of $x$} and denoted by $\omega_f(x)$.
	In other words, $y\in\omega_f(x)$ if and only if there is a strictly increasing sequence of positive integers $(n_i)_{i\geq 0}$ such that $\left(f^{n_i}(x)\right)\to y$ as $i\to\infty$.
	A point $x\in X$ is \emph{non-wandering} if for every neighborhood $U$ of $x$ and every $N>0$ there is some $n>N$ such that $f^n(U)\cap U\neq\emptyset.$	Otherwise, we call it a \emph{wandering} point.
	A point $x\in X$ is \emph{recurrent} if $x\in\omega_f(x)$. 
	The set of all recurrent points is denoted by $\Rec (f)$ and we denote $\omega(f)=\bigcup_{x\in X}\omega_f(x).$
	
		\begin{remark}\label{rem:evperiodic}
%			Let $(X,f)$ be a dynamical system. 
If $x\in X$ is recurrent and eventually periodic, then $x$ is periodic. 
		\end{remark}

	Each dynamical system $(X,f)$ may be assigned a \textit{topological entropy} $\htop(f)\in [0,\infty]$ which to some extent reflects its complexity.
While it is hard to tell whether higher entropy means higher topological complexity of the system (cf. \cite{HK,HKO}), it is commonly accepted fact that
complex systems start with positive entropy, while entropy zero represents tame dynamics. The reader not familiar with definition of topological entropy is
referred to standard textbooks, e.g. \cite{Walters}.

\begin{definition}
	Let $f\colon X\to X$ and $g\colon Y\to Y$ be two continuous maps of compact metric spaces $X,Y$ and $K\subset X$ be a closed invariant set.
	A semi-conjugation $\phi\colon X\to Y$ is an almost conjugacy between $f\vert_K$ and $g$ if
	\begin{enumerate}[(1)]
		\item\label{almost1} $\phi(K)=Y$,
		\item $\forall y\in Y$, $\phi^{-1}(y)$ is connected,
		\item $\forall y\in Y$, $\phi^{-1}(y)\cap K=\partial\phi^{-1}(y)$, where $\partial A$ denotes the boundary of $A$ in $X$,
		\item\label{alomst4} $\exists N\geq 1$ such that $\forall y\in Y$, $\phi^{-1}(y)\cap K$ has at most $N$ elements (and, by \eqref{almost1}, at least one element).
	\end{enumerate}
\end{definition}
When $X$ and $Y$ are graphs then the last condition is a consequence of the previous ones since both singletons and subgraphs of $G$ have boundaries of uniformly bounded cardinality.

By results of Bowen \cite{Bow71} condition \eqref{alomst4} implies that  $f|_K$ and $g$ have the same topological entropies. 

\subsection{Hyperspaces}

	A \emph{hyperspace} of a metric space $(X,\dist)$ is a specified family of nonempty closed subsets of $X$. 
	The hyperspaces which will be of our interest are $2^X$, the hyperspace of all nonempty compact subsets of $X$ and $C(X)$ which is the hyperspace of all continua, i.e. the connected elements of $2^X$.
	For a point $x\in X$ and a nonempty subset $A\subset X$ we define the \emph{distance from the point $x$ to the set $A$} as $\dist(x,A)=\inf\{\dist(x,y)\colon y\in A\}$
	and for each nonempty set $A\subset X$ and each $\epsilon>0$, we define the \emph{$\epsilon$-neighborhood of the set $A$} as $N(A,\epsilon)=\{x\in X\colon \dist(x,A)<\epsilon\}$.
	We can now endow $2^X$ with a function $\dist_H\colon 2^X\times 2^X\to\mathbb{R}$, defined for each pair $A,B\in 2^X$ as:
	\[\dist_H(A,B)=\inf\{\epsilon\geq 0\colon A\subset N(A,\epsilon)\text{ and }B\subset N(A,\epsilon)\}.\] 
	It is well known that $\dist_H$ is a metric, called \emph{Hausdorff metric}, and that $(2^X,\dist_H)$ is a compact metric space (e.g. see \cite{Nadler}). 
	It is also well know that $C(X)$ is a compact subset of $2^X$. 
	The topology generated by $\dist_H$ does not depend on the metric $\dist$ but only on the topology that $\dist$ generates on $X$.

	If $f\colon X\to X$ is a continuous map, then it naturally extends to $\bar{f}\colon 2^X\to 2^X$ via $\bar{f}(A)=f(A)$.
	It is well known that $\bar{f}$ defined this way is a continuous map on $2^X.$
	Obviously, $C(X)$ is $\bar{f}$-invariant subset of $2^X$ and we may denote $\tilde{f}=\bar{f}\vert_{C(X)}$.
	This way we obtain two \emph{induced systems} $(2^X,\bar{f})$ and $(C(X),\tilde{f})$ generated by $(X,f)$.
	Observe that $(X,f)$ may be considered a subsystem (that is, a closed invariant subset) of $(C(X),\tilde{f})$ by identifying $x\in X$ with $\{x\}\in C(X)$,
	and thus also a subsystem of $(2^X,\bar f)$. 

\subsection{Limit sets in graph maps}
	In~\cite{MaiClosed}, authors proved that the family of all $\omega$-limit sets of a graph map is closed with respect to the Hausdorff metric in the hyperspace of all closed subsets of the graph.
	As a direct consequence, $\omega$-limit set of any point in $G$ lies in a maximal $\omega$-limit set.
	
%	A subgraph $M$ of $G$ is said to be \emph{periodic} if it is a periodic point of $\tilde f$, that is $f^p(M)=M$ for some $p$.  
%	Then the \emph{period} of $M$ is minimal $p$ with this property.

		The set $\mathcal{O}(M)=\bigcup_{i=0}^kf^i(M)$ is called a \emph{cycle of graphs of period $k$} if $M$ is a subgraph of $G$ such that $M,f(M),...,f^{k-1}(M)$ are pairwise disjoint and $f^k(M)=M.$  

 A \emph{generating sequence} or a \emph{sequence generating a solenoidal set} is any nested sequence of cycles of graphs $M_1\supset M_2\supset\cdots$ for $f$ with periods tending to infinity. By definition $Q=\bigcap_n M_n$ is closed and strongly invariant, i.e. $f(Q)=Q$. Any closed and strongly invariant subset of $Q$ is called a \emph{solenoidal set}. The characterization of Blokh (e.g. see \cite[Theorem 1]{B1}) shows that $Q$ contains a perfect minimal set $Q_{min}=Q\cap\overline{Per f}$ such that $Q_{min}=\omega(x)$, for all $x\in Q$, and a maximal $\omega$-limit set (with respect to inclusion) $Q_{max}$ such that $Q_{max}=Q\cap \omega(f)$ which we will refer to as \emph{maximal solenoid}. 

	If $x$ is a point of a graph $G$, then by a \emph{side $T$ of the point $x$} we mean a family of open, non-degenerate arcs $\{V_T (x)\}$ containing no branching points, with one endpoint at $x$, such that $\bigcap_{V\in T} \overline{V}=\{x\}$ and if $U,V\in T$, then either $V\subset U$ or $U\subset V$. Members of the family $T$ are called $T$-sided neighborhoods of $x$.\\
 Let $f\colon G\to G$ be a graph map and $M\subset G$ be a cycle of graphs. For every $x\in M$, we define the \emph{prolongation set of $x$ with respect to $f|_M$}: 
 \[P_M(x,f)=\bigcap_U\overline{\bigcup_{i=1}^{\infty}f^i(U)},\]
 where $U$ is a relative neighborhood of $x$ in $M$. If the acting map $f$ is clear from the context, we will simply write $P_M(x)$ and when $M=G$ then we will write $P(x,f)$ or simply $P(x)$.  Observe that just as $x$ being recurrent is equivalent to $x\in\omega_f(x)$, $x$ being non-wandering is equivalent to $x\in P(x)$. Obviously, $P(x)$ is an invariant closed set and the map $f|_{P(x)}$ is surjective whenever $x$ is a non-wandering point. Similarly, we define the \emph{prolongation set of $x$ with respect to a side $T$}:
 \[P^T_M(x,f)=\bigcap_{V_T(x)}\overline{\bigcup_{i=1}^{\infty}f^i(V_T(x))},\]
 where the intersection is taken over relative $T$-sided neighborhoods $V_T(x)$ of $x$ in $M$. 
 
 An arc $V\subseteq G$  is \emph{non-wandering} if there is an integer $m\geq 1$ such that $f^m(V)\cap V\neq\emptyset$. 
 %It is easy to see that if every $V_T(x)\in T$ is non-wandering then $f|_{P^T(x)}$ is surjective.
 If every set $V_T(x)\in T$ is non-wandering then $P^T(x)$ is one of the following (see \cite{Forys}){}:
 	\begin{itemize}
 		\item $P^T(x)$ is a periodic orbit,
 		\item $P^T(x)$ is a cycle of graphs,
 		\item $P^T(x)$ is a solenoidal set $Q$.
 	\end{itemize}
 	
Let $M\subset G$ be a cycle of graphs. We define the following sets:
\[E(M,f)=\{x\in M: P_M(x,f)=M\}\]
and
\[E_S(M,f)=\{x\in M: \text{there is a side $T$ such that } P^T_M(x,f)=M\}.\] 
Clearly, $E_S(M,f)\subseteq E(M,f)$. These sets are closed and invariant. If $E(M,f)$ is infinite then, by~\cite[Theorem 2]{B1}, $E_S(M,f)= E(M,f)$. In general, $E_S(M,f)\neq E(M,f)$ and $f(E(X, f))\neq E(X, f)$.
\begin{theorem}[Blokh, \cite{B1,B2}]\label{E}
	Let $M\subset G$ be a cycle of graphs such that $E_S(M,f)$ is non-empty. If $E_S(M,f)$ is finite then it is a periodic orbit. Otherwise, $E_S(M,f)= E(M,f)$ and it is an infinite maximal $\omega$-limit set.
\end{theorem}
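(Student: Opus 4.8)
The plan is to split the argument according to the cardinality of $E_S(M,f)$, using throughout that this set is closed and invariant, that $E_S(M,f)\subseteq E(M,f)$, the trichotomy for one-sided prolongations from \cite{Forys}, and the cited fact that an infinite $E(M,f)$ forces $E_S(M,f)=E(M,f)$.

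First I would treat the \emph{infinite} case, which is the more direct one. Since $E_S(M,f)\subseteq E(M,f)$, infiniteness of $E_S(M,f)$ makes $E(M,f)$ infinite, so by \cite[Theorem 2]{B1} we immediately obtain $E_S(M,f)=E(M,f)$. It then remains to identify this common set as a maximal $\omega$-limit set. Here I would first note that every $x\in E(M,f)$ satisfies $x\in P_M(x,f)=M$, hence is non-wandering, and that the filling property $P_M(x,f)=M$ means that from arbitrarily small neighbourhoods the forward orbit returns densely; this is what lets one produce a point whose $\omega$-limit set is all of $E(M,f)$, so that $E(M,f)$ is itself an $\omega$-limit set. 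Its maximality then follows from the fact, recalled from \cite{MaiClosed}, that the family of $\omega$-limit sets of a graph map is closed and each such set embeds into a maximal one; the identification of the value $E(M,f)$ with that maximal set is exactly the content I would extract from Blokh's structural analysis in \cite{B1,B2}.

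For the \emph{finite} case, fix $x\in E_S(M,f)$ and a side $T$ with $P^T_M(x,f)=M$. Since $x\in M=P^T_M(x,f)$, every $T$-sided neighbourhood $V_T(x)$ is non-wandering, so the trichotomy of \cite{Forys} applies, and because $P^T_M(x,f)=M$ is a cycle of graphs (hence infinite) we land in the cycle-of-graphs alternative. As $E_S(M,f)$ is finite and invariant, the orbit of $x$ is finite, so $x$ is eventually periodic; the first real task is to upgrade this to recurrence. I would argue that if some point of $E_S(M,f)$ were only pre-periodic, then the one-sided filling $P^T_M=M$ would transport the recurrence carried by the returning nearby orbits onto a whole neighbourhood, producing infinitely many points with full prolongation and contradicting finiteness of $E_S(M,f)$. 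Hence every point of $E_S(M,f)$ is recurrent, and being eventually periodic it is periodic by Remark~\ref{rem:evperiodic}.

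Finally I would show that $E_S(M,f)$ is a single orbit rather than a union of several periodic orbits, i.e. that it is minimal. If it contained two distinct periodic orbits, each possessing a side from which a small neighbourhood fills all of $M$ under iteration, their mutual accessibility would once more force uncountably many points into $E(M,f)$; then $E(M,f)$ would be infinite and, by \cite[Theorem 2]{B1}, $E_S(M,f)=E(M,f)$ would be infinite, contradicting the present assumption. Thus $E_S(M,f)$ is a single periodic orbit. The main obstacle throughout is the finite case: converting the one-sided filling property $P^T_M(x,f)=M$ into genuine recurrence and into uniqueness of the orbit. This is precisely the delicate part drawn from \cite{B1,B2}, whereas the infinite case becomes comparatively routine once \cite[Theorem 2]{B1} is invoked.
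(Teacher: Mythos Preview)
The paper does not prove this statement at all: Theorem~\ref{E} is quoted from Blokh's work \cite{B1,B2} and is used as a black box throughout, so there is no in-paper proof to compare your proposal against.

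As for the proposal itself, it is more of an outline than a proof, and at several junctures it defers to the very sources the theorem is cited from. In the infinite case you write that the filling property ``lets one produce a point whose $\omega$-limit set is all of $E(M,f)$'' and then that the maximality ``is exactly the content I would extract from Blokh's structural analysis in \cite{B1,B2}''; in the finite case you conclude by saying ``This is precisely the delicate part drawn from \cite{B1,B2}''. These are the substantive steps, and you are not actually supplying arguments for them---you are citing Blokh, which is what the paper does by stating the theorem with attribution. The intermediate heuristics you give (e.g.\ that a merely pre-periodic point in $E_S(M,f)$ would ``transport the recurrence \ldots\ onto a whole neighbourhood, producing infinitely many points with full prolongation'', or that two periodic orbits with one-sided filling would ``force uncountably many points into $E(M,f)$'') are plausible intuitions but are not made precise enough to stand as proofs; in particular, the mechanism by which the one-sided prolongation condition propagates to nearby points requires a genuine argument about how sides and neighbourhoods behave under iteration on a graph, and that argument is the actual content of Blokh's theorem.

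In short: your proposal correctly identifies the case split and the relevant ingredients, but the hard steps are left to the references, which is consistent with how the paper treats this result---as a citation rather than something to be reproved.
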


Let $E(M,f)$ be an infinite maximal $\omega$-limit set from Theorem ~\ref{E}. We say that $E(M,f)$ is a \emph{basic set} if $\Per (f)\cap M\neq \emptyset$ and we denote it by $D(M,f)$, and in the opposite case when $\Per (f)\cap M= \emptyset$ we say that $E(M,f)$ is a \emph{circumferential set} and we denote it by $S(M,f)$. We will simply write $D(M)$ and $S(M)$ in the case where $f$ is clear from the context.

	For an infinite $\omega$-limit set $\omega_f(x)$, let
	\[\mathcal{C}(x)=\{X\colon X\subset G \text{ is a cycle of graphs and } \omega_f(x)\subset X\}.\]
	One can show that $\mathcal{C}(x)$ is never empty.
	Following ideas of Blokh in their paper \cite{Snoha}, Ruette and Snoha proved that given $x\in G$ and a cycle of graphs $M$, if
	$\omega_f(x)=D(M)$, or $\omega_f(x)=S(M)$  , then the periods of cycles in $\mathcal{C}(x)$ are bounded from above and $M$ is minimal in $\mathcal{C}(x)$ with respect to inclusion.		
	Moreover, if $\omega_f(x)$ is solenoidal set then there exists a sequence $\left(M_n\right)_{n\geq 1}$ of cycles of graphs in $\mathcal{C}(x)$ with strictly increasing periods $\left(p_n\right)_{n\geq 1}$ such that, for all $n\geq 1$, $M_{n+1}\subset M_n$ and $\omega_f(x)\subset\bigcap_{n\geq 1}M_n$.
	Furthermore, for all $n\geq 1$, $p_{n+1}$ is a multiple of $p_n$ and every connected component of $M_n$ contains the same number (equal to $p_{n+1}/p_n\geq 2$) of components of $M_{n+1}$.

	\section{Recurrent continua and omega limit sets}\label{sec:recurrent-vs-limitsets}

In this section we inspect the relations between the recurrent subgraphs and the $\omega$-limit sets of their elements.
	The main goal is to prove Lemma~\ref{lem:containscycles}.

 While this is not directly stated in the assumptions, for each result in the rest of the paper we assume that we are given
a dynamical system $(G,f)$ on some arbitrary topological graph $G$. Let us also recall that we refer to Remark~\ref{rem:ambient}, which provides standing assumption on the metric on $G$
for the whole paper.

\begin{lemma}\label{lem:limitset}
If a continuum $A$ is recurrent then $A\cap \omega_f(x)\neq \emptyset$ for every $x\in A$.
\end{lemma}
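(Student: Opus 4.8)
The plan is to exploit the recurrence of $A$ in the hyperspace to produce, for an arbitrary fixed $x\in A$, a limit point of the $f$-trajectory of $x$ that is forced to lie in $A$ by Hausdorff convergence.

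First I would unwind the hypothesis. Saying that the continuum $A$ is recurrent means $A\in\omega_{\tilde f}(A)$, so there is a strictly increasing sequence $(n_i)_{i\geq 0}$ of positive integers with $\tilde f^{n_i}(A)=f^{n_i}(A)\to A$ in the Hausdorff metric $\dist_H$. Now fix $x\in A$. Then $f^{n_i}(x)\in f^{n_i}(A)$ for every $i$. Since $G$ is compact, after passing to a subsequence I may assume $f^{n_i}(x)\to y$ for some $y\in G$. Because $n_i\to\infty$, this already yields $y\in\omega_f(x)$ directly from the definition of the $\omega$-limit set.

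It then remains only to check that $y\in A$. Here I would invoke the elementary interaction between pointwise convergence and Hausdorff convergence. Since $f^{n_i}(x)\in f^{n_i}(A)$, we have $\dist(f^{n_i}(x),A)\leq\dist_H(f^{n_i}(A),A)$, and the right-hand side tends to $0$ by the choice of $(n_i)$. Combining this with $f^{n_i}(x)\to y$ and the fact that $A$ is closed (being a continuum, hence compact), the triangle inequality gives $\dist(y,A)=0$, that is, $y\in A$. Therefore $y\in A\cap\omega_f(x)$, which proves the claim for the chosen $x$, and $x\in A$ was arbitrary.

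Since the whole argument reduces to a single extraction of a convergent subsequence together with one routine estimate, I expect no serious obstacle. The only point that requires a moment's care is the passage from $f^{n_i}(A)\to A$ in $\dist_H$ to $\dist(f^{n_i}(x),A)\to 0$; this is exactly the upper-semicontinuity half of Hausdorff convergence, ensuring that limits of points drawn from a Hausdorff-convergent sequence of sets land in the limit set.
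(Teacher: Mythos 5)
Your proposal is correct and follows essentially the same route as the paper: extract a subsequence along which $f^{n_i}(A)\to A$ in $\dist_H$, pass to a further subsequence so that $f^{n_i}(x)$ converges, and observe that the limit lies in $\omega_f(x)$ by definition and in $A$ because $\dist(f^{n_i}(x),A)\le\dist_H(f^{n_i}(A),A)\to 0$ and $A$ is closed. The paper phrases the last step by choosing auxiliary points $y_k\in A$ with $d(f^{n_k}(x),y_k)\to 0$, but this is the same estimate.
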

\begin{proof}
Take some $x\in A$ and let $n_k$ be a sequence such that $\lim_{k\to \infty} d_H(A,f^{n_k}(A))=0$.
Then there are $y_k\in A$ such that $\lim_{k\to\infty} d(f^{n_k}(x),y_k)=0$.
Without loss of generality we may also assume that the limit $\lim_{k\to \infty} f^{n_k}(x)=z$ exists,
and therefore $\lim_{k\to \infty} y_k=z\in A\cap \omega_f(x)$.
\end{proof}

	\begin{lemma}\label{lem:contained-cir}
		If a continuum $A$ is recurrent and there is some circumferential set $\omega=S(K,f)$ such that $\omega_f(x)=\omega,$ for all $x\in A,$ then $A\subset K.$ 
	\end{lemma}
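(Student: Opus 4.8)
The plan is to argue by contradiction, assuming $A\not\subset K$, and to quantify how far $A$ reaches outside $K$. Since $\omega=S(K,f)\subset K$ and $A$ is recurrent, Lemma~\ref{lem:limitset} already yields $A\cap K\supset A\cap\omega\neq\emptyset$, so $A$ meets $K$; the whole difficulty is to promote this to full containment. For a continuum $B$ write $\rho(B)=\max_{b\in B}\dist(b,K)$, the overshoot of $B$ beyond the closed set $K$. Then $A\subset K$ is precisely the statement $\rho(A)=0$, so I suppose $\rho(A)=r>0$ and seek a contradiction.

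First I would record two consequences of the hypothesis $\omega_f(x)=\omega$ for every $x\in A$. Since $K$ is a cycle of graphs we have $f(K)=K$, so every orbit meeting $K$ stays in $K$; and because $\omega_f(x)=\omega\subset K$, for each fixed $x\in A$ only finitely many iterates overshoot $K$ by a fixed amount, i.e. $\dist(f^n(x),K)\to 0$ as $n\to\infty$. Thus $\rho(f^n(A))=\max_{x\in A}\dist(f^n(x),K)$ is controlled by a family that converges to $0$ pointwise on $A$. Recurrence of $A$ supplies times $n_k\to\infty$ with $d_H(f^{n_k}(A),A)\to 0$, whence $\rho(f^{n_k}(A))\to\rho(A)=r>0$. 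Consequently there are $x_k\in A$ and, after passing to subsequences, points $x_k\to x^\ast\in A$ and $f^{n_k}(x_k)\to a'\in A$ with $\dist(a',K)=r>0$; in particular $a'\in A\setminus K$ and $a'\in P(x^\ast,f)$. At the same time $\dist(f^{n_k}(x^\ast),K)\to 0$, so along a further subsequence $f^{n_k}(x^\ast)\to w\in\omega\subset K$. The configuration $x_k\to x^\ast$ with $f^{n_k}(x_k)\to a'\notin K$ and $f^{n_k}(x^\ast)\to w\in K$ exhibits a definite separation $\dist(a',w)\geq r>0$ of the images of two points that are becoming arbitrarily close.

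The heart of the argument, and the step I expect to be the main obstacle, is to rule out this separation using the circumferential nature of $\omega$. Because $\Per(f)\cap K=\emptyset$ and $E_S(K,f)=E(K,f)=S(K,f)$ is an infinite maximal $\omega$-limit set, Blokh's structure theory (Theorem~\ref{E} together with the Ruette--Snoha description of $\mathcal{C}(x)$) forces $f|_K$ to be of irrational-rotation type: the minimal set $\omega$ carries equicontinuous, rotation-like dynamics with no horseshoe-type separation of nearby orbits. I would make this precise as equicontinuity of the family $\{f^n\}$ at the accumulation point $w\in\omega$, or equivalently as the existence of arbitrarily small forward-invariant neighborhoods of $\omega$ adapted to the component structure of $G\setminus K$; either formulation contradicts the separation $\dist(a',w)\geq r$ produced above, since $x_k\to x^\ast$ then forces $f^{n_k}(x_k)$ and $f^{n_k}(x^\ast)$ to share a common limit. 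This contradiction gives $r=0$, i.e. $A\subset K$. The delicate point is that the required equicontinuity must hold not merely on $\omega$ itself but along orbits entering $K$ from the complementary components, which is exactly what distinguishes circumferential sets from basic sets, for which the analogous containment can fail.
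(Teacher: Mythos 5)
Your setup through the separation $\dist(a',w)\geq r>0$ is sound (the Lipschitz continuity of $B\mapsto\max_{b\in B}\dist(b,K)$ with respect to $\dist_H$, and the pointwise convergence $\dist(f^n(x),K)\to 0$ for $x\in A$, are both correct), but the step you yourself flag as the main obstacle is a genuine gap, and it is the whole proof. To contradict $\lim_k f^{n_k}(x_k)=a'\neq w=\lim_k f^{n_k}(x^\ast)$ with $x_k\to x^\ast$ you need equicontinuity of $\{f^{n_k}\}$ at $x^\ast$ (not at $w$, as written), and $x^\ast$ is an arbitrary point of $A$ that you have not yet shown to lie in $K$, let alone in $\omega$. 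The almost conjugacy of $f|_K$ with an irrational rotation says nothing about how $f^n$ separates nearby points that are merely attracted to $\omega$ from outside $K$: two points of $A$ close to $x^\ast$ can enter $K$ at very different times and hence track very different phases of the rotation, so no equicontinuity along such orbits is available from the structure theory you cite. Nothing in the paper supplies the "arbitrarily small forward-invariant neighborhoods of $\omega$" you would need as a substitute, and the lemma cannot be closed this way without a substantial new argument.

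The actual proof is much more elementary and avoids the rotation structure entirely. Since $\omega$ is infinite and $G$ has only finitely many branching points, while the boundary of $K$ in $G$ is finite, one can pick $y\in\omega$ admitting an open (in $G$) arc neighborhood $U$ with $U\subset K$. Because $\omega_f(x)=\omega\ni y$ for \emph{every} $x\in A$, the sets $\{f^{-n}(U)\}_{n\geq 0}$ form an open cover of the compact set $A$; taking a finite subcover with maximal index $m$ and using $f(K)=K$ gives $f^m(A)\subset K$, hence $f^n(A)\subset K$ for all $n\geq m$. Recurrence of $A$ then exhibits $A$ as a Hausdorff limit of continua contained in the closed set $K$, so $A\subset K$. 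Note that this is where the hypothesis that \emph{all} points of $A$ share the same circumferential limit set is used, and it replaces the uniform attraction you were trying to extract from equicontinuity by a one-line compactness argument.
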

	\begin{proof}
		Take $y\in\omega$ not being a branching point nor endpoint of $K$ and some open neighborhood $U\subset K$ of $y,$ contained in the same edge as $y.$ 
		Then the family of preimages $\{f^{-n}(U)\}_{n \geq 0}$ is an open cover for $A.$ 
		But $A$ is compact so it admits finite subcover and since $K$ is invariant we can pick some $m\geq 0$ such that $f^m(A)\subset K.$
		But $A$ is recurrent and $K$ is closed, so the lemma is proved.
	\end{proof}

\begin{lemma}\label{lem:connectingcontinua}
Let $A$ be a recurrent subcontinuum and assume that there are continua $J_1,J_2\subset A$ which satisfy $f(J_i)\supset J_i$.
Then there is $k>0$ and a continuum $J_1\cup J_2\subset J \subset A$ such that $J\subset f^k(J)$.
\end{lemma}

\begin{proof}
	Denote $K_1=\overline{\bigcup_{m\geq 0} f^m(J_1)}$ and $K_2=\overline{\bigcup_{m\geq 0} f^m(J_2)}$
	Observe that $K_1, K_2$ are continua, $f(K_1)=K_1,$ $f(K_2)=K_2$ and $K_1\cup K_2\subset A$, since $A$ is recurrent
	and $\left(f^m(J_i)\right)_{m\geq 0}$ is a nested sequence for $i=1,2$.
	We distinguish two possible cases.
	If $K_1\cap K_2\neq\emptyset$ then it is enough to put
put $J=K_1\cup K_2$ and the proof is completed, so assume that it is not the case.

Since $K_1,K_2\subset A$ there is at least one arc $L_1\subset A$ such that $L_1\cap K_i\neq \emptyset$ and $\Int L_1\cap K_i= \emptyset$ for $i=1,2$.
	But by definition $K_1\cap f(L_1)\neq \emptyset$ and $K_2\cap f(L_1)\neq \emptyset$, so there is an arc $L_2\subset f(L_1)$ such that $L_2\cap K_i\neq \emptyset$ and $\Int L_2\cap K_i=\emptyset$.
	We continue with this process obtaining a sequence of arcs $L_i$ connecting $K_1$ with $K_2$ and such that $f(L_i)\supset L_{i+1}$. Then there are $k,m$ such that $f^k(L_m)\supset L_m$. 
	If we put $J=K_1\cup L_m\cup K_2$, then by definition $J\subset f^m(A)$ and $f^k(J)\supset J$ and we may assume (by taking larger $m$ and replacing $J$ by its iterate)
that $k$ divides $m$. Then $J\subset f^{jk+m}(A)$ for each $j\geq 0$, and since $A$ is recurrent for $f^k$, we must have $J\subset A$. The proof is completed.
\end{proof}

	In what follows, we consider the case when a recurrent continuum $A$ contains a point $x$ such that $\omega_f(x)$ is a subset of some basic set $D(X)$.
	In order to deal with basic sets, we will use the model map $g\colon Y\to Y$, where $Y$ is a cycle of graphs, $g$ being almost conjugate to $f\vert_{D(X)}$ and $g^n\vert_{Y_i}$ being mixing on every component $Y_i$ of $Y$. The reader is referred to \cite{B1,B2} for more details, see also \cite{Snoha,Forys}. 	

\begin{lemma}\label{lem:modelBS}
	Let $f\colon G\rightarrow G$ be a graph map and $X\subseteq G$ be a cycle of graphs. Suppose that $D(X)$ is a basic set. Then there is a transitive map $g\colon Y\rightarrow Y$, where $Y$ is a cycle of graphs $Y_0,\ldots Y_{n-1}$ with possibly non-empty intersection in the endpoints, and $\phi\colon X\rightarrow Y$ which almost conjugates $f|_{D(X)}$ and $g$. Moreover, $g^n|Y_i$ is mixing, for $i=0,\ldots,n-1$. The period $n$ of $Y$ is a multiple of the period of $X$ and $Y_i\cap Y_j=\End(Y_i)\cap \End(Y_j)\neq \emptyset$ iff $i\neq j$ and $i$ and $j$ are congruent modulo the period of $X$.	
\end{lemma}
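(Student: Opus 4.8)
The plan is to obtain the model not by a fresh construction but by unpacking Blokh's spectral decomposition \cite{B1,B2} and recasting it in the almost-conjugacy language of the paper. Since $D(X)$ is a basic set, Theorem~\ref{E} gives $E_S(X,f)=E(X,f)=D(X)$, an infinite maximal $\omega$-limit set with $\Per(f)\cap X\neq\emptyset$. First I would form the finest upper semicontinuous monotone (connected-fiber) decomposition of the minimal cycle of graphs $X$ containing $D(X)$ that collapses the relevant connected components of $X\setminus D(X)$ to points and is compatible with $f$. This yields a quotient graph on which the orbits permute cyclically; organizing the quotient into its period cycle gives a cycle of graphs $Y=Y_0\cup\cdots\cup Y_{n-1}$, a monotone surjection $\phi\colon X\to Y$ with connected fibers, and a well-defined graph map $g\colon Y\to Y$ with $g\circ\phi=\phi\circ f$. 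Transitivity of $g$ is inherited from the fact that $D(X)$ is a single maximal $\omega$-limit set whose orbits become dense after the collapse.

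Next I would verify the four conditions of almost conjugacy. Surjectivity of $\phi|_{D(X)}$ is immediate, since $\phi$ identifies only parts of the complement of $D(X)$, whence $\phi(D(X))=\phi(X)=Y$. Connectedness of the fibers is built into the monotone decomposition. The delicate point is the boundary condition $\phi^{-1}(y)\cap D(X)=\partial\phi^{-1}(y)$: a fiber is either a single point of $D(X)$, in which case it equals its own boundary and lies in $D(X)$, or it is a subgraph arising from a collapsed complementary gap, in which case $D(X)$ meets it exactly at the finitely many points where the gap attaches to the basic set, i.e. at $\partial\phi^{-1}(y)$, while the interior of the gap avoids $D(X)$. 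The uniform cardinality bound in \eqref{alomst4} then follows from the remark in the paper that singletons and subgraphs of $G$ have boundaries of uniformly bounded cardinality.

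For the remaining assertions I would pass to the period $n$ of $Y$ and invoke the mixing part of Blokh's decomposition, by which $g^n|_{Y_i}$ is topologically mixing on each component; this is precisely the feature that characterizes basic sets, namely that the induced map on the components of the period cycle is mixing. The combinatorial description of $Y$ should then be read off from how the period-$n$ cycle refines the period-$p$ cycle of $X$, where $p$ is the period of $X$: since $\phi$ intertwines the dynamics and $g(Y_i)=Y_{i+1}$, the integer $n$ must be a multiple of $p$, and two components $Y_i,Y_j$ can share a point only when they are collapsed images of subgraphs attached at a common vertex of the period-$p$ cycle, which forces the meeting to occur only in endpoints and only when $i\equiv j\pmod p$.

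The main obstacle I expect is twofold. First, arranging the monotone decomposition so that $g$ is genuinely well defined and transitive, rather than merely semiconjugate to a system with dense orbits, requires that the collapsed pieces be exactly the complementary components of $D(X)$ and that $f$ permute them; this is where one must rely on Blokh's machinery \cite{B1,B2} instead of reproving it. Second, pinning down the mixing of $g^n|_{Y_i}$ together with the exact intersection pattern of the $Y_i$ is a bookkeeping task tied to the period structure, and the honest route is to cite the corresponding statements of the spectral decomposition and check that our normalization of the periods $p$ and $n$ matches theirs.
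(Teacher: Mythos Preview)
Your approach matches the paper's treatment: the paper does not supply a proof of this lemma at all, but states it as a known structural result and refers the reader to Blokh's spectral decomposition \cite{B1,B2} (see also \cite{Snoha,Forys}) for the details. Your plan to extract the monotone quotient from Blokh's machinery and verify the almost-conjugacy conditions is exactly the unpacking the paper declines to write out.
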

%{\color{red}With the above the assumptions are more clear here. Notation has to be introduced and made more uniform - above/below}

\begin{lemma}\label{lem:basicset-sub} 
	Assume that a continuum $A$ is recurrent and $\omega=\omega_f(x)$ %\omega(K,f)$ 
	is a basic set for some $x\in G$. 
	Denote by $K_1,K_2,...,K_m$ minimal (in the sense of inclusion) graphs such that $\omega\subset \bigcup_i K_i$, $f$ permutes graphs $K_i$
	and $m$ is such that $f^m|_{K_i}$ is almost conjugated to a mixing map.
	If $\Int A\cap \omega \cap K_i\neq\emptyset$ then $K_i\subset A$.
\end{lemma}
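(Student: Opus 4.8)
The plan is to extract from a single point $p\in\Int A\cap\omega\cap K_i$ a genuine two-dimensional-free ``thick'' piece of $A$ sitting inside $K_i$, then to use the mixing structure on $K_i$ to expand this piece onto all of $K_i$ under iteration, and finally to pull the resulting inclusion back to $A$ itself using recurrence.

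First I would build the piece. Since $p\in\Int A$, choose an open set $W$ of $G$ with $p\in W\subset A$, and let $J$ be the closure of the component of $W\cap K_i$ containing $p$. Then $J\subset A\cap K_i$ is a nondegenerate subcontinuum having $p$ in its interior relative to $K_i$; in particular $J$ meets $\omega$ and has non-empty interior in $K_i$. This is precisely where the hypothesis $\Int A\cap\omega\cap K_i\neq\emptyset$ (rather than merely $A\cap\omega\cap K_i\neq\emptyset$) is used, since only a $J$ with interior in $K_i$ will be able to spread over the whole component.

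The main step, and the one I expect to be the principal obstacle, is to show that $f^{jm}(J)\supset K_i$ for some $j\geq 1$. Here I would invoke the model from Lemma~\ref{lem:modelBS}: $f^m|_{K_i}$ is almost conjugate, via some $\phi$, to a mixing graph map $g$ on the corresponding component of $Y$. Mixing graph maps are locally eventually onto, i.e.\ some forward iterate of every non-empty open set equals the whole component, so $g^{\ell}(\phi(J))$ covers that component for large $\ell$. The delicate point is transferring this covering property back through $\phi$: the almost conjugacy is only finite-to-one and collapses finitely many connected fibres, so one must check that these finitely many collapsed arcs do not obstruct surjectivity of the iterate, and then recover $f^{jm}(J)\supset K_i$ using the connectedness of $J$, the non-emptiness of its interior, and the invariance $f^m(K_i)=K_i$.

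Finally I would close the argument with recurrence. From $J\subset A$ and $f^{jm}(J)\supset K_i$ we get $K_i\subset f^{jm}(A)$. Since $f$ permutes $K_1,\dots,K_m$ with $f^m(K_i)=K_i$, applying $f^{jm}$ repeatedly gives $K_i=f^{\ell jm}(K_i)\subset f^{\ell jm}(A)$ for every $\ell\geq 1$. Because $A$ is recurrent it is recurrent for $f^{jm}$ as well (the same passage to powers already used in the proof of Lemma~\ref{lem:connectingcontinua}), so there is a sequence $\ell_s\to\infty$ with $d_H\!\left(f^{\ell_s jm}(A),A\right)\to 0$. As $K_i\subset f^{\ell_s jm}(A)$ for all $s$ and $A$ is closed, every point of $K_i$ lies within arbitrarily small distance of $A$, whence $K_i\subset A$, completing the proof.
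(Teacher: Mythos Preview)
Your overall strategy—find a seed arc $J\subset A\cap K_i$ with nonempty interior in $K_i$, blow it up to all of $K_i$ under iteration, then use recurrence to pull $K_i$ back into $A$—is sound, and the final recurrence step is carried out correctly. The gap is precisely where you flag it as ``delicate'': the passage from $g^{\ell}(\phi(J))=Y_i$ to $f^{jm}(J)\supset K_i$.

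Your justification rests on the assertion that the almost conjugacy ``collapses finitely many connected fibres''. This is not true in general: the nontrivial fibres of $\phi$ are the closures of the components of $K_i\setminus\omega$, and for a basic set there are typically countably infinitely many such gaps. Thus $\phi(f^{\ell m}(J))=Y_i$ only tells you that $f^{\ell m}(J)$ meets every fibre, not that it contains every fibre; the complement $K_i\setminus f^{\ell m}(J)$ can hide inside these infinitely many nontrivial fibres, and the ingredients you list (connectedness of $J$, nonempty interior, $f^m(K_i)=K_i$) do not by themselves rule this out. What you would need is that $f^m|_{K_i}$ is locally eventually onto from neighbourhoods of basic-set points, and that statement is essentially as hard as the lemma.

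The paper sidesteps this by a two-step argument that never asserts $f^{jm}(J)\supset K_i$. First it proves the weaker inclusion $\omega\cap K_i\subset A$, using only mixing of $f^m$ restricted to the basic set together with recurrence of $A$. Second, to upgrade to $K_i\subset A$, it argues by contradiction via backward fibres: if some fibre $D_0=\phi^{-1}(x_0)$ escapes $A$, choose a $g$-backward orbit of pairwise distinct points $x_j$ and set $D_j=\phi^{-1}(x_j)$. Each $\partial D_j\subset\omega\cap K_i\subset A$, the $D_j$ are pairwise disjoint, and $A$ is connected, so all but finitely many $D_j$ lie entirely in $A$; hence $D_0\subset f^{jm}(A)$ for all large $j$, and recurrence forces $D_0\subset A$. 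That backward-fibre device is exactly what handles the infinitely many collapsed arcs your outline does not account for.
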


\begin{proof}
We will rely on the structure of basic sets from Lemma~\ref{lem:modelBS}.
	Since $f^m|_{K_i\cap\omega}$ is mixing and $\Int A\cap K_i$ is an open set intersecting $\omega$,
	 $A\cap K_i\cap \omega$ is an open set in relative topology of $\omega$ and hence
	 we have $\lim_{j\to \infty} d_H(K_i\cap \omega, f^{jm}(A\cap K_i\cap \omega))=0$. 
	 Since $A$ is recurrent set of $f^m$, for every $n>0$ there is arbitrarily large $j_n$ such that $f^{j_n m}(A)\subset N(A,1/n)$. By mixing (e.g. see \cite[Theorem~49]{POCoherent})
	 there is a dense set of points $Q$ in $\omega\cap K_i$ such that for an continuous function $F\colon Q\to \omega\cap K_i$ and any $q\in Q$ values $f^{ j_nm}(q)$ converge to $F(q)$ over a subsequence $\{j_n\}$. As a consequence
	 \[\omega \cap K_i=\omega_f(y)\cap K_i=\omega_{f^m}(y)\subset A,\]
	 therefore $K_i\cap \omega\subset A$.

 We claim that the following stronger condition $K_i\subset A$ holds.
	Let $g\colon Y\to Y$ be a mixing graph map such that there is a semi-conjugacy $\phi\colon K_i\to Y$ between $(K_i,f^m)$ and $(Y,g)$, almost conjugating $f^m\vert_{\omega\cap K_i}$ and $g$.
Suppose on the contrary, that $K_i$ is not a subset of $A$.
	Then there exists a connected open set $D_0$, a fiber of $\phi,$ not fully contained in $A.$ 
	Since the model map is mixing, there exists a sequence $(x_j)_{j\geq 0}$ in $Y$ of pairwise distinct points where $x_0=\phi(D_0)$ and $g(x_j)=x_{j-1}.$
	This way we construct a sequence of pairwise disjoint connected sets $D_j=\phi^{-1}(x_j)\subset K_i$ (generated by a backward extension of the trajectory of $D_0$) such that $f^m(D_j)=D_{j-1}$.
	By the definition of almost conjugacy, each $D_j$ intersects $\omega$ in its boundary.
	Since $A$ is connected, 

		$D_j\subset A$ for all $j$ sufficiently large. But then $D_0\subset f^{jm}(A)$ for all $j$ sufficiently large, which implies that $D_0\subset A$
		by the fact that $A$ is recurrent under $f^m$. A contradiction.
		This shows that $K_i\subset A$, completing the proof.
\end{proof}
\begin{corollary}\label{cor:basicset-sub} 

	 Assume that a nondegenerate continuum $A$ is recurrent and $\omega=\omega_f(x)$ %\omega(K,f)$ 
is a basic set for some $x\in G$.
	Denote by $K_1,K_2,...,K_r$ minimal (in the sense of inclusion) graphs such that $\omega\subset \bigcup_i K_i$, $f$ permutes graphs $K_i$
	and $r$ is such that $f^r|_{K_i}$ is almost conjugated to a mixing map.
	If $A\cap\omega\cap K_i\neq\emptyset$ then either $K_i\subset A$
or  $A\cap\omega\cap K_i$ is finite set and each $y\in A\cap\omega\cap K_i$ is eventually periodic and % (if we include periodic points in the definition of ev. periodic).
moreover, $A\cap K_i\cap \omega$ contains a periodic point from orbit of each such $y.$
\end{corollary}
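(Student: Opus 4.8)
The plan is to split the argument according to whether $A$ meets $\omega\cap K_i$ in its interior. If $\Int A\cap\omega\cap K_i\neq\emptyset$, then Lemma~\ref{lem:basicset-sub} applies directly and yields $K_i\subseteq A$, which is the first alternative; so I would assume from now on that $\Int A\cap\omega\cap K_i=\emptyset$, i.e. $A\cap\omega\cap K_i\subseteq A\setminus\Int A=\partial A$. Since $A$ is a nondegenerate subcontinuum of a graph it is a subgraph, and the frontier of a subgraph in $G$ is a finite set; hence $F:=A\cap\omega\cap K_i$ is finite, which settles the finiteness claim. It remains to show that every $y\in F$ is eventually periodic and that the forward orbit of each such $y$ meets $A\cap\omega\cap K_i$ in a periodic point.

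The engine of the rest of the proof is recurrence transported by mixing. Writing $T=f^r$, the recurrence of $A$ for $f$ passes to $T$ on the induced system (as $\Rec(g)=\Rec(g^r)$ for any selfmap $g$ of a compact metric space), so there are $j_k\to\infty$ with $d_H(T^{j_k}(A),A)\to 0$. Since $T$ preserves $\omega\cap K_i$, for every $w\in F\subseteq A$ the points $T^{j_k}(w)$ remain in $\omega\cap K_i$ while approaching $A$, so all their accumulation points lie in $A\cap\omega\cap K_i=F$; passing to a subsequence one gets a self-map $\sigma\colon F\to F$ with $T^{j_k}(w)\to\sigma(w)$. The first structural fact I would extract is that \emph{no side of any $y\in F$ that lies in $A$ can be expanding}: if some in-$A$ side $S$ had $P^S_{K_i}(y,T)=K_i$, then a fixed nondegenerate subarc $V\subseteq A$ of $S$ would, by mixing of the model map $g$ from Lemma~\ref{lem:modelBS}, have $T^{i}(V)$ eventually $\epsilon$-dense in $K_i$; choosing a recurrence time $j_k$ beyond this threshold and combining $T^{j_k}(A)\supseteq T^{j_k}(V)$ with $d_H(T^{j_k}(A),A)\to 0$ would force $A$ to be dense in $K_i$, hence $K_i\subseteq A$, contradicting the standing assumption $\Int A\cap\omega\cap K_i=\emptyset$.

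To finish I would analyse a side $S$ of $y$ contained in $A$, which exists because $A$ is a nondegenerate continuum through $y$. By the previous paragraph $S$ is non-expanding, so $P^S_{K_i}(y,T)\subsetneq K_i$; since $T|_{K_i}$ is mixing it carries no proper invariant cycle of graphs, and $\omega$ being a basic set excludes a solenoid, so the trichotomy for prolongations along non-wandering sides leaves only the possibility that $P^S_{K_i}(y,T)$ is a periodic orbit. This confines $\omega_T(y)$ to a single periodic orbit $\mathcal P\subseteq\omega\cap K_i$ and forces $y$ to be eventually periodic. Combining this with the recurrence return, $T^{j_k}(y)\to y^\ast\in F$ gives $y^\ast\in\omega_T(y)=\mathcal P$, so $y^\ast$ is a periodic point of $\mathcal P$ lying in $A\cap\omega\cap K_i$ and in the forward orbit of $y$, which is precisely the last assertion; Remark~\ref{rem:evperiodic} keeps this consistent, since a recurrent eventually periodic contact point is in fact periodic. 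The step I expect to be the main obstacle is upgrading the local picture to genuine \emph{eventual} (rather than merely asymptotic) periodicity, together with verifying that the in-$A$ side-neighborhoods are non-wandering so that Blokh's trichotomy applies; the delicate case is when these in-$A$ side-neighborhoods are wandering, so that $\omega$ accumulates at $y$ only from directions leaving $A$, and this is where the argument will need the most care.
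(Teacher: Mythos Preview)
Your opening is exactly right: the dichotomy via Lemma~\ref{lem:basicset-sub} and the observation that $A\cap\omega\cap K_i\subset\partial A$ (hence finite) is how the paper begins as well. The divergence, and the trouble, starts with eventual periodicity.

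Your route through prolongation sets and Blokh's trichotomy has two gaps that you partly flag yourself. First, even if the argument went through, it only shows that $\omega_T(y)$ is a periodic orbit, i.e.\ that $y$ is \emph{asymptotically} periodic; a point of a basic set can converge to a periodic orbit without being eventually periodic, so the conclusion ``$y$ has finite orbit'' does not follow. Second, the side $S$ of $y$ you produce lies in $A$, but there is no reason it lies in $K_i$: $A$ is not assumed to be a subset of $K_i$, and $A\cap K_i$ may reduce to $\{y\}$ locally. Without $S\subset K_i$ the quantity $P^S_{K_i}(y,T)$ is not defined, and the mixing argument you invoke (which needs the side-neighborhood to live where the model map is mixing) cannot start. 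The non-wandering hypothesis for the trichotomy is a further unchecked assumption, as you note.

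The paper avoids all of this with a short direct contradiction. Assume the $T$-orbit of $y$ is infinite. Along a recurrence sequence $n_k$ for $A$ under $T=f^r$, one has $\diam T^{n_k}(A)>\tfrac12\diam A$, and after passing to a subsequence the points $T^{n_k}(y)\in\omega\cap K_i$ are pairwise distinct and monotone on a free arc. Since each $T^{n_k}(A)$ is a connected set of uniformly positive diameter passing through $T^{n_k}(y)$, for some $k_0$ one of these iterates must contain a neighbor $T^{n_{k_0\pm1}}(y)\in\omega\cap K_i$ in its interior. Lemma~\ref{lem:basicset-sub} then gives $K_i\subset T^{m}(A)$ for all $m\ge n_{k_0}$, contradicting recurrence of $A$ (because $K_i\not\subset A$). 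This yields finiteness of $\mathcal O(y)$ in one stroke, and the periodic point in $A\cap\omega\cap K_i$ then comes from Lemma~\ref{lem:limitset} applied to $f^r$, which is the same accumulation idea you used at the end.
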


\begin{proof}
	If $\Int A\cap K_i\cap \omega\neq\emptyset$ 
then $K_i\subset A$ by Lemma~\ref{lem:basicset-sub},
	so suppose that $\Int A\cap K_i\cap \omega=\emptyset,$ i.e.
$A\cap K_i\cap \omega\subset \partial A$. 	 		In particular $K_i\setminus A\neq \emptyset$.
	The intersection $A\cap K_i\cap \omega$ is obviously finite so, to finish the proof, take some $y\in A\cap\omega\cap K_i$ 
	and let us prove that its orbit is finite.
Suppose on the contrary, that the trajectory of $y$ consists of pairwise distinct elements.
Since $A\in\Rec(\widetilde{f^r}),$ we may choose increasing sequence $(n_k)_{k\geq 1}$ of positive integers such that $\dist_H(A,f^{rn_k}(A))\to 0.$	
	In particular, we may assume that  $\diam f^{n_k}(A)>\diam A/2$ for all $k\geq 1$.
	We may also assume that the limit $\hat{y}=\lim_k f^{rn_k}(y)\in \omega\cap K_i$ exists and that the sequence $f^{rn_k}(y)\in \omega\cap K_i$ is monotone in an edge of $G$.
Since $f^{rn_k}(y)$ are all distinct and each $f^{rn_k}(A)$ is 	connected, we have that for some $k_0$,
$\left(\Int f^{rn_{k_0}}(A)\right)\cap\omega\cap K_i\neq\emptyset$ and hence $K_i\subset f^{rm}(A)$ for each $m\geq n_{k_0}$ by Lemma~\ref{lem:basicset-sub}, contradicting the recurrence of $A$.	
	We conclude that $\mathcal{O}(y)$ is finite, hence contains a periodic point and $y$ is eventually periodic. By Lemma~\ref{lem:limitset} applied to $A$ and $f^r,$ $A$ contains a periodic point from %$\mathcal{O}(A)\cap K_i.$ 	
  $\mathcal{O}(y)\cap K_i.$
 \end{proof}

	After dealing with basic sets, in the following two lemmas we consider the case when a nondegenerate recurrent continuum $A$ contains a point $x$ such that $\omega_f(x)$ is a subset of a maximal solenoid $\omega$.
	Recall that, unlike for the other types of maximal $\omega$-limit sets, here we can not point out minimal cycle of graphs containing $\omega$.
	Still, we will show that that we can always pick some cycle of graphs $K\supset\omega$ such that, for each its component $K_i$ we either have $A\cap\omega\cap K_i=\emptyset$ or $K_i\subset A$.
	\begin{lemma}\label{lem:solenoid_1}
%If a nondegenerate continuum $A$ is recurrent and $\omega_f(x)$ is contained in some maximal solenoid $\omega$ for some $x\in A$, then there is some cycle of graphs $K$ containing $\omega$ with some component lying %in $A.$ 
Let $A$ be a nondegenerate recurrent continuum such that for some $x\in A$ the $\omega$-limit set $\omega_f(x)$ is contained in a maximal solenoid $\omega$. 
	Let $(K^n)_{n\geq 1}$ be a nested sequence of cycles of graphs containing $\omega$ with strictly increasing periods $p_n$.
	 Then there is some cycle of graphs $K^m$ from that sequence with some component contained in $A.$ 
\end{lemma}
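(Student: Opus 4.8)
The plan is to reduce the statement to a spreading-and-pullback argument inside a single, suitably deep component of the cycle, mimicking the strategy used for basic sets in Lemma~\ref{lem:basicset-sub} and Corollary~\ref{cor:basicset-sub}, with the role of mixing played instead by the minimality of the odometer-type quotient of the solenoid. First I would record the easy reductions. By Lemma~\ref{lem:limitset} applied to $x\in A$ we have $A\cap\omega\neq\emptyset$, so fix $z\in A\cap\omega$; since $A$ is nondegenerate, $\delta:=\diam A>0$. Because $\omega$ is a (maximal) solenoid it contains no periodic points, so the orbit of $z$ is infinite. Recurrence of $A$ gives an increasing sequence $(n_k)$ with $\dist_H(A,f^{n_k}(A))\to 0$, and after discarding finitely many terms we may assume $\diam f^{n_k}(A)>\delta/2$ for all $k$.

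The heart of the argument is to upgrade ``$A$ meets $\omega$'' to ``some forward image of $A$ has nonempty interior meeting $\omega$ inside one component of some $K^{m}$''. Here I would run the monotonicity argument of Corollary~\ref{cor:basicset-sub}: passing to a power $f^{r}$ under which $A$ is recurrent, the points $f^{rn_k}(z)$ are pairwise distinct (infinite orbit) and, after passing to a subsequence, converge monotonically along an edge of $G$ to a point $\hat z\in\omega$; connectedness of the sets $f^{rn_k}(A)$ together with $\diam f^{rn_k}(A)>\delta/2$ then forces $\big(\Int f^{rn_{k_0}}(A)\big)\cap\omega\neq\emptyset$ for some $k_0$. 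Call this image $B=f^{rn_{k_0}}(A)$ and fix $u\in(\Int B)\cap\omega$; for every level $n$ the point $u$ lies in a unique component $K^{n}_{j(n)}$, and I will choose $m$ large in the next step.

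Next I would replace the mixing input of Lemma~\ref{lem:basicset-sub} by the solenoidal model. Writing $g=f^{p_m}$, the component $C:=K^{m}_{j(m)}$ is $g$-invariant and $\omega\cap C$ is itself a solenoidal set for $g$ whose minimal subsystem is minimal; hence the relatively open set $(\Int B)\cap\omega\cap C$ spreads under the iterates of $g$ to cover $\omega\cap C$, giving $\omega\cap C\subset B$ and, by recurrence, $\omega\cap C\subset A$. To pass from $\omega\cap C$ to all of $C$ I would repeat the fiber argument from the second half of Lemma~\ref{lem:basicset-sub}: the quotient map onto the invertible odometer model admits a backward orbit of the fiber over $u$, producing pairwise disjoint connected fibers $D_\ell\subset C$ with $g(D_\ell)=D_{\ell-1}$, each meeting $\omega$ in its boundary; connectedness of $A$ forces $D_\ell\subset A$ for all large $\ell$, and then recurrence of $A$ under $g$ pulls $D_0$, hence the whole component $C$, into $A$. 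Since $C$ is a component of $K^{m}$, this proves the claim.

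The main obstacle I anticipate is precisely the replacement of mixing: unlike the basic-set case there is no Lemma~\ref{lem:modelBS}-type mixing model available, so I must first develop the solenoidal analogue — the odometer quotient of $g$ on $\omega\cap C$ and the minimality that makes an interior slice spread over the entire slice — and then verify that the fiber-pullback of Lemma~\ref{lem:basicset-sub} still goes through when ``mixing'' is weakened to ``minimal, invertible quotient''. A secondary subtlety is securing the infinite orbit of $z$, equivalently ruling out that $A\cap\omega$ consists only of eventually periodic points; I resolve this using the absence of periodic points in a solenoid, and should it fail in a degenerate way the eventually periodic subcase is handled exactly as in Corollary~\ref{cor:basicset-sub}.
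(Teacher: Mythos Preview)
Your first reduction --- obtaining a monotone sequence $f^{rn_k}(z)\in\omega$ along a free arc, with each $f^{rn_k}(A)$ connected of diameter at least $\delta/2$ --- is fine and is essentially what the paper does. The gap is in what you do next.

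The spreading step fails. You argue that the relatively open set $(\Int B)\cap\omega\cap C$ ``spreads under the iterates of $g=f^{p_m}$ to cover $\omega\cap C$'', and from this conclude $\omega\cap C\subset B$. But the minimal quotient of a solenoid is an odometer, hence \emph{equicontinuous}: for a nonempty relatively open $U\subset\omega\cap C$ one has $\bigcup_{n\ge0}g^n(U)=\omega\cap C$, yet at no single time (and not for all large $n$) does $g^n(U)$ cover $\omega\cap C$. That eventual covering under a \emph{single} iterate is exactly what mixing supplies in Lemma~\ref{lem:basicset-sub}, and it is what your recurrence pullback would need; minimality alone does not deliver it. Consequently the inference $\omega\cap C\subset B$ (and then $\omega\cap C\subset A$) is unjustified. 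The subsequent fiber argument inherits the same problem and has an additional one: for solenoids there is no almost conjugacy $\phi\colon C\to Y$ with connected fibers analogous to Lemma~\ref{lem:modelBS}, so the backward-orbit-of-fibers mechanism from Lemma~\ref{lem:basicset-sub} has no model map to run on.

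The paper avoids all of this with a one-line trapping argument that you are already in position to execute. From your monotone points in a free arc $J$, pick $s$ so that $f^s(A)$ contains three consecutive ones, say $y_k,y_{k+1},y_{k+2}\in J$. Now choose $m$ so large that $p_m>n_{k+2}$; then $y_k,y_{k+1},y_{k+2}$ lie in three \emph{different} components of $K^m$. The component containing $y_{k+1}$ is therefore an arc in $J$ trapped between $y_k$ and $y_{k+2}$, hence contained in the connected set $f^s(A)$. Adjusting by a few more iterates to make the exponent a multiple of $p_m$ and using recurrence of $A$ under $f^{p_m}$ pulls this component into $A$. No spreading, no fibers, no model map.
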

\begin{proof}
	First, we are going to show that there is some cycle of graphs $K^m$ from the sequence with period $p^m\geq 1$ and some $s\geq 0$ such that  $K^m_j\subset f^s(A)$ for some component  $K^m_j$ of $K^m.$ 
	
	By Lemma~\ref{lem:limitset} we have $A\cap\omega\neq \emptyset$, so let us fix a point $x\in A\cap\omega$
and let $n_k$ be an increasing sequence such that $\lim_{k\to \infty} d_H(A,f^{n_k}(A))=0$, in particular $\diam f^{n_k}(A)>\diam A/2$ for every $k$.
We may also assume that there is a free arc $J$ such that $y_k:=f^{n_k}(x)\in J$ is a monotone sequence in the ordering of $J$.
Since $\omega$ does not contain periodic points, all $y_k$ are different.

	But if we take sufficiently large $r$ and denote $s=n_r$ then there is $k$ such that $f^s(A)\ni y_k,y_{k+1},y_{k+2}$.
If we fix a cycle of graphs $K^m=\cup_{i=0}^{p_m-1} f^i(K^m_0)\supset \omega$ of period $p_m>n_{k+2}$ then by definition $y_k,y_{k+1},y_{k+2}$ belong to different elements $f^i(K^m_0)$ of the cycle.
In particular there is $j$ such that $y_{k+1}\in f^j(K^m_0)=K^m_j\subset f^s(A)$, and so $K^m_j$ is a subarc of $J$. 
Taking a few more iterations of $A$, we find $r$ and $i$ such that $K_i^m=f^i(K^m_0)\subset f^{rp_m}(A)$. 
Since $A$ is recurrent under $f^{p_m}$ and, 
	for all $l\geq 0$,
\[
K^m_i=f^{i}(K^m_0)=f^{i+lp_m}(K^m_0)\subset f^{(r+l)p_m}(A),
\] 
the result follows.
\end{proof}

\begin{lemma}\label{lem:solenoid-components}
	If a nondegenerate continuum $A$ is recurrent and there is $x\in A$ such that $\omega_f(x)$ is contained in a maximal solenoid $\omega,$ then there is a cycle of graphs $K$ containing $\omega$ with the property that for each $y\in\omega\cap A,$ $A$ contains a component of $K$ to which $y$ belongs.
\end{lemma}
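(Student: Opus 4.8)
The plan is to produce the cycle $K$ by refining the nested sequence $(K^n)$ of cycles of graphs containing $\omega$ (with periods $p_n\to\infty$) supplied by Lemma~\ref{lem:solenoid_1}, and to reduce the assertion to a single point. Write $C_n(y)$ for the component of $K^n$ containing $y$. Two standing facts will be used repeatedly: first, since the $p_n$ tend to infinity and the components of $K^n$ are pairwise disjoint subgraphs of the fixed graph $G$, the solenoid $\omega$ is totally disconnected and $\diam C_n(y)\to 0$; second, if a \emph{fixed} compact set $C$ satisfies $C\subset B_k$ with $\dist_H(B_k,A)\to 0$, then $C\subset A$.

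First I reduce to the following per-point claim: \emph{for every $y\in\omega\cap A$ there is an $n$ with $C_n(y)\subset A$.} Granting this, note that each $\omega\cap C_n(y)$ is relatively clopen in $\omega$ and that $C_n(y)\subset A$ forces $C_n(y')\subset A$ for every $y'\in\omega\cap C_n(y)$. Hence the sets $\omega\cap C_{n(y)}(y)$, $y\in\omega\cap A$, form an open cover of the compact set $\omega\cap A$; choosing a finite subcover indexed by $y_1,\dots,y_p$ and setting $K=K^N$ with $N=\max_i n(y_i)$, any component of $K^N$ that meets $\omega\cap A$ sits inside one of the $C_{n(y_i)}(y_i)\subset A$ and is therefore contained in $A$. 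This is exactly the stated dichotomy, so it remains to prove the per-point claim.

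Fix $y\in\omega\cap A$ and suppose toward a contradiction that $C_n(y)\not\subset A$ for all $n$; picking $w_n\in C_n(y)\setminus A$ and using $\diam C_n(y)\to 0$ gives $w_n\to y$, so $y$ is approached by points of $G\setminus A$ lying in its own solenoidal components at every scale. Now I run the monotonicity-and-trapping scheme of Lemma~\ref{lem:solenoid_1} at the point $y$ itself (which is legitimate since $y\in A$ and $\omega_f(y)\subset\omega$): as $\omega$ carries no periodic points the orbit of $y$ is infinite, and using recurrence of $A$ I pass to times $n_k$ with $\dist_H(f^{n_k}(A),A)\to 0$ so that the points $f^{n_k}(y)\in\omega\cap f^{n_k}(A)$ are pairwise distinct and monotone along a free arc $J$, converging to some $\hat y\in\omega\cap A$. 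As in the proof of Lemma~\ref{lem:solenoid_1}, a single image $f^{s}(A)$ then contains three of these points, with the middle one lying in a component $D$ that is a subarc of $J$, trapped between the other two; hence $D\subset f^{s}(A)$. Choosing $s$ inside an arithmetic progression with difference $p_m$ (where $m$ is the level of $D$) and invoking the closing argument of Lemma~\ref{lem:solenoid_1} — namely $D=f^{\ell p_m}(D)\subset f^{s+\ell p_m}(A)$ for all $\ell\ge 0$, together with recurrence of $A$ under $f^{p_m}$ — upgrades this to $D\subset A$. Finally, because $f$ permutes the components of $K^m$ as a single $p_m$-cycle and $A$ is recurrent, the family $\{i:K^m_i\subset A\}$ is invariant under the return-time residues and is a union of cosets of the subgroup they generate; since $D$ is a return-time translate of $C_m(y)$, its coset lands inside this family, forcing $C_m(y)\subset A$ and contradicting the standing assumption.

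The delicate point — and the reason this does not reduce to the basic-set case — is the absence of mixing: for basic sets Lemma~\ref{lem:basicset-sub} turns a single interior intersection with $\omega$ directly into containment of a whole component, whereas on a solenoid no mixing is available, so a trapped interior point need not fill out its component. The plan copes with this by descending to arbitrarily fine cycles, where an interior point of $f^{s}(A)$ automatically contains an entire (tiny) component, and by exploiting the $\ge 2$-fold refinement of Ruette--Snoha to locate, at a deep enough level, distinct components of the \emph{same} coset lying consecutively along a free arc, which is precisely the configuration the trapping requires. The genuine obstacle is the possible odometer-type synchronization between the return times of $A$ and the solenoid periods $p_n$, which may obstruct such straddling configurations at any fixed level; overcoming it is exactly where the fine-scale refinement and the coset structure of the return times must be combined with care.
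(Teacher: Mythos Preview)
Your reduction from the global statement to the per-point claim via compactness of $\omega\cap A$ is correct and in fact cleaner than the paper's endpoint-by-endpoint argument on edges. The difficulty lies entirely in the per-point claim, and there your argument has a genuine gap.

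The problem is the final coset step. You obtain a component $D=K^m_{j+n_{k+1}}$ with $D\subset A$, where $C_m(y)=K^m_j$, and you want to deduce $K^m_j\subset A$. For this you assert that $S=\{i:K^m_i\subset A\}$ is a union of cosets of the subgroup $\langle R\rangle$ generated by ``return-time residues'' modulo $p_m$, and that $n_{k+1}\in\langle R\rangle$. The first assertion is fine \emph{provided} $R$ is taken to be the set of residues $r$ for which there exist return times $n_\ell\equiv r\pmod{p_m}$ with $\dist_H(f^{n_\ell}(A),A)\to 0$; only such residues give the containment $K^m_{i+r}\subset A$ from $K^m_i\subset A$. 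But then the second assertion fails: $n_{k+1}$ is a single, fixed return time chosen \emph{before} $m$ (you need $p_m>n_{k+2}$ for the trapping), and there is no reason $n_{k+1}\bmod p_m$ lies in $R$, let alone in $\langle R\rangle$. You cannot first restrict the sequence $(n_k)$ to residues in $R$ and then pick $m$, because $R$ depends on $m$. You yourself flag this ``odometer-type synchronization'' as the genuine obstacle in your closing paragraph, but the argument as written does not overcome it; it simply asserts the desired conclusion. (A minor side remark: your standing claim that $\diam C_n(y)\to 0$ is not justified and can fail---the nested components $C_n(y)$ may intersect in a nondegenerate continuum---though you do not actually use it.)

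The paper's route is quite different and avoids any group-theoretic transfer from $D$ to $C_m(y)$. Assuming $C_n(y)\not\subset A$ for all $n$, it iterates Lemma~\ref{lem:solenoid_1} to produce, at two successive depths, components $K^{n_k}_{i^k_A}\subset A$ and $K^{n_{k+r}}_{i^{k+r}_A}\subset A$ lying in a single edge on the same side of $y$, together with the corresponding $y$-components $K^{n_k}_{i^k_y},K^{n_{k+r}}_{i^{k+r}_y}$ (not contained in $A$) interleaved with them in a specific order. It then builds an arc $L\subset A$ joining the two $A$-components and shows $L\subset f^{mp_{n_k}}(L)$ for some $m$; the resulting expansion forces $K^{n_{k+r}}_{i^{k+r}_y}\subset f^{jmp_{n_k}}(A)$ for all large $j$, contradicting recurrence of $A$ since $K^{n_{k+r}}_{i^{k+r}_y}\setminus A\neq\emptyset$. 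This expanding-arc mechanism is the substitute for the missing algebraic link in your approach.
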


\begin{proof}
	Fix any $x\in A$ and a maximal solenoid $\omega\supset \omega_f(x)$. There is $z\in G$ such that $\omega= \omega_f(z)$.

%	
%	It is enough to prove that for each such point $y$ of each edge there is some component $K_j^n,\ y\in K_j^n$ of  some cycle of graphs $K^n$ containing $\omega,$ which lies fully in $A.$
%	Passing to a cycle $K$ from the same nested sequence of cycles containing $\omega$ whose period is common multiple of periods of those cycles, its respective component of each such $y$ lies fully in $A.$ 
%	For any other $y^\prime\in \omega\cap A,$ $y^\prime$ lies between two of such extreme points and its respective component of $K$ lies in $A$ since components of the same cycle are pairwise disjoint. 

%	By Lemma~\ref{lem:limitset}, $\omega$ intersects $A,$ so pick some point $y\in \omega\cap A.$ 
%Fix $y=y_i^C$ for some $i=1,2$.
%	Suppose on the contrary%, i.e. for each element $K^n$ of the 
Fix any $y\in A\cap\omega$ and suppose that there exists a nested sequence $\{K^n\}_{n\geq 0}$ of cycles of graphs with increasing sequence of periods $p_n$ such that $\omega\subset K^n$  and, for each $n$, the unique component $K^n_{j^n_y}\ni y$ of $K^n$ is not fully contained in $A$, i.e. $K^n_{j^n_y}\setminus A\neq \emptyset$. 
	Note that, for each $n$, $y\in \omega\cap K^n_{j^n_y}=\omega_{f^{p_n}}(f^{j^n_y}(z))$, $\omega_{f^{p_n}}(f^{j^n_y}(z))$ is a solenoid for $f^{p_n}$ and $A$ is recurrent under $f^{p_n}$. Therefore, if we fix a sequence $(K^m\cap K_{j_y^n}^n)_{m>n}$ of cycles of graphs for $f^{p_n}$, by Lemma~\ref{lem:solenoid_1} there is a component $K^s_{j^s_A}$ of  a smaller cycle of graphs { $K^s$ (for some $s=s(n)>n$)} from the starting sequence such that  $K^s_{j^s_A}\subset A\cap K^n_{j^n_y}$.
%	 {\color{magenta} this part is not needed now: "Since all but countably many points of $\omega$ can be obtained as an intersection of a nested sequence of components of $K^n$ and $K_{i^n_A}\cap \omega$ is uncountable, there exists $m$ such that $K^m\subset K^n_A$."}
%	 
 Using this procedure, we construct strictly increasing sequence of naturals $(n_k)_{k\geq 1}$ as follows.
	 We start with $n_1=1$, use $n=1$ in the previous paragraph, fix a component $K^1_{i^1_y}\ni y$ and, by setting $n_2\coloneqq { s(n_1)}$, we find a cycle of graphs $K^{n_2}$ with component $K_{i^{2}_A}^{n_2}$ such that $K_{i^{2}_A}^{n_2}\subset A\cap K^1_{i^1_y}\ni y$.
	 We repeat the procedure inductively by setting $n\coloneqq n_{k}$ in step $k+1$, obtaining $n_{k+1}\coloneqq {s(n_k)}$.
This way we obtain decreasing sequence $K^{n_k}$ of cycles of graphs containing $\omega$ such that for each element of the sequence, there is a component $K^{n_k}_{i^k_y}$ containing $y$ such that $K^{n_k}_{i^k_y}\setminus A\neq \emptyset$ and a component $K^{n_k}_{i^k_A}\subset A.$ 
	Also, the sequence satisfies $K^{n_{k+1}}_{i^{k+1}_A}\subset K^{n_k}_{i^k_y}$.

Observe that for any edge $E,$ if $E\subset K_{i^k_y}^{n_k}$ for all $k\geq 0$ then obviously $E$ does not intersect $K_{i^k_A}^{n_k}$ for any $k$ since different components of the same cycle are disjoint.
	Also, if $E\cap K_{i^{k_0}_y}^{n_{k_0}}=\emptyset$ for some $k_0,$ then $E\cap K_{i^k_y}^{n_{k}}=\emptyset$ holds for each $k\geq k_0.$
	Since there are finitely many branching points, all but finitely many elements of the family $\{K_{i^k_A}^{n_k}\}_{k\geq 0}$ are subsets of the edges which are intersected with but not contained in $\bigcap_{k\geq 0}K_{i^k_y}^{n_k}.$ 
	We conclude that there has to be an edge $E_0$ and some $k,r\geq 1$ such that
	
	\begin{enumerate}
\item\label{con1:definingK} there is a point $q\in E_0\setminus K^{n_{k-1}}_{i^{k-1}_y}$ and there is a component $K^{n_k}_{i^k_A}$ of  $K^{n_k}$ contained in $A\cap E_0$, %{\color{red}Adjusted condition here and in the next condition, which should help in the argument}
\item\label{con2:definingK} $K^{n_{k+r}}$ has a component $K^{n_{k+r}}_{i^{k+r}_A}\subset A$ such that $K^{n_{k+r}}_{i^{k+r}_A}$ and $K^{n_k}_{i^k_A}$ are contained in the same component $C$ of  $K^{n_{k-1}}_{i^{k-1}_y} \cap E_0\setminus\{y\}$ (whether $y\in E_0$ or not).
%{\color{magenta}Maybe can be written more explicitly. We want $K^{n_{k+r}}_{i^{k+r}_A}$ to be further from $q$ than $K^{n_{k}}_{i^{k}_A}$ as on Figure; do not need to write $q$ since it is not in $K^{n_{k-1}}_{i^k_y}$ but maybe it helps to say that $C$ is between $q$ and $y$?}{\color{red}Agree with the second part. The next blue sentence considers the first part.}
\end{enumerate}
%{\color{magenta}As explained below, we can require $r=1$.}{\blue I do not think we explain it, neither use. Remove?}
When \eqref{con1:definingK} is satisfied, to ensure \eqref{con2:definingK} it is enough to consider a subsequence of $n_k$ such that all sets $K^{n_{k}}_{i^k_A}$
are contained in the same component of $E_0\setminus\{y\}$.
	Note that, from the construction, $K_{i^{k+r}_A}^{n_{k+r}}\subset K^{n_k}_{i^k_y}$ while $K_{i^k_A}^{n_{k}}\cap K^{n_k}_{i^k_y}=\emptyset.$

We claim that $K_{i_A^{k+r}}^{n_{k+r}}$ lies between $K_{i_A^{k}}^{n_{k}}$ and $K_{i_y^{k+r}}^{n_{k+r}}\cap C$ in the ordering of $E_0.$
To prove the claim, 
first, notice that $K_{i_A^{k}}^{n_{k}}$, $ K_{i_A^{k+r}}^{n_{k+r}}$ and $K_{i_y^{k+r}}^{n_{k+r}}\cap C$ are all subintervals of $C$ since $E_0\setminus C\neq\emptyset.$
	Moreover, those three sets are pairwise disjoint by the assumptions.
	If $K_{i_y^{k+r}}^{n_{k+r}}\cap C$ would lie between the other two then we would have $ K_{i_y^{k+r}}^{n_{k+r}}\cap C=K_{i_y^{k+r}}^{n_{k+r}}\ni y$ which would contradict % \eqref{con2:definingK}.
	the definition of $C$.
Similarly, $K_{i_y^{k}}^{n_{k}}\cap C\subset C\setminus K_{i_A^{k}}^{n_{k}}$ is necessarily connected set containing $ K_{i_A^{k+r}}^{n_{k+r}}$ and $K_{i_y^{k+r}}^{n_{k+r}}\cap C$ so $K_{i_A^{k}}^{n_{k}}$ cannot lie between $K_{i_A^{k+r}}^{n_{k+r}}$ and $K_{i_y^{k+r}}^{n_{k+r}}\cap C$ and the asserted ordering follows.
	 Indeed, the claim holds.
 
	Let arc $L$ be the union of $K^{n_k}_{i^k_A}\cup K_{i^{k+r}_A}^{n_{k+r}}$ and the unique arc in $C\subset E_0$ connecting those two sets.
	We can also assume that $L\subset A$ since $A\cap E_0$ can have at most two components and we can choose infinitely many such pairs $K_{i^{k+r}_A}^{n_{k+r}},K_{i^{k}_A}^{n_k}$ as subsets of some edge. 
	The map $f^{p_{n_k}}$ fixes $K^{n_{k-1}}_{i^{k-1}_y}$ and $K^{n_k}_{i^k_y}$ but $K^{n_{k+r}}$ has period $p=\frac{p_{n_{k+r}}}{p_{n_k}}\in\mathbb{N}\setminus\{1\}$ under $f^{p_{n_k}}.$ 

	There is some $m\leq p$ such that $K^{n_{k+r}}_{i^{k+r}_y}\subset f^{mp_{n_k}}(L).$ 
Furthermore, $K^{n_{k}}_{i^{k}_A}=f^{mp_{n_k}}(K^{n_{k}}_{i^{k}_A})\subset f^{mp_{n_k}}(L)$ and $f^{mp_{n_k}}(L)\subset f^{mp_{n_k}}(K^{n_{k-1}}_{i^{k-1}_y})=K^{n_{k-1}}_{i^{k-1}_y}\not\ni q$, 
	 and $K_{i_A^{k+r}}^{n_{k+r}}$ lies between $K_{i_A^{k}}^{n_{k}}$ and $K_{i_y^{k+r}}^{n_{k+r}}\cap C$ in ordering of $E_0$, so since $f^{mp_{n_k}}(L)$ is connected, 
we must have $L\subset f^{mp_{n_k}}(L)$
	and as a consequence $K^{n_{k+r}}_{i^{k+r}_y}\subset f^{jmp_{n_k}}(A)$ for every $j>1$. This contradicts recurrence of $A$
	under $f^{mp_{n_k}}$ because by definition $K^{n_{k+r}}_{i^{k+r}_y}\setminus A\neq \emptyset$.
	We have just proved that for any {$y\in A\cap\omega$} there exists a cycle of graphs $K$ such that  $\omega\subset K^n$  and the unique component $K_{i}\ni y$ of $K$ satisfies $y\in K_i\subset A$.

	For each edge $E$ of $G$ such that $E\setminus A\neq\emptyset$ and each component $B$ of $E\cap A,$ if  $C=B\cap \omega\neq \emptyset$ then $C$ is a closed nonempty subset of $E,$ so there exist minimal and maximal points $y^C_1,y^C_2$ of $C$ in the ordering of $E.$ 
	Fix any edge $E$ of $G$ such that $E\setminus A\neq\emptyset$ and associated sets $B,C$ as above.
	
	By previous arguments there is a cycle of graphs $K$ containing $\omega$ and we can select its components $y_i^C\in K_i\subset A$.
	Then any $y\in C$ is contained in a component of $K$ intersecting $B$. But then it is one of $K_i$ or a component of $K$
	between them, since components of $K$ are pairwise disjoint. In any case, it belongs to a component of $K$ fully contained in $B\subset A$, because $y_i^C$
	are extreme points of $B\cap \omega$. This property is hereditary by any cycle of graphs $\hat{K}$ containing $\omega$ such that $\hat{K}\subset K$.
	
	The above argument about $C$ can be repeated with $B=E$ for any edge $E\subset A$ such that $\Int E\cap \omega\neq \emptyset$.
	
	Since we have finitely many sets $C$, we have also finitely many points $y^C_i$, and so the result follows by an easy induction.

\begin{figure}[H]

\includegraphics[scale=0.5]{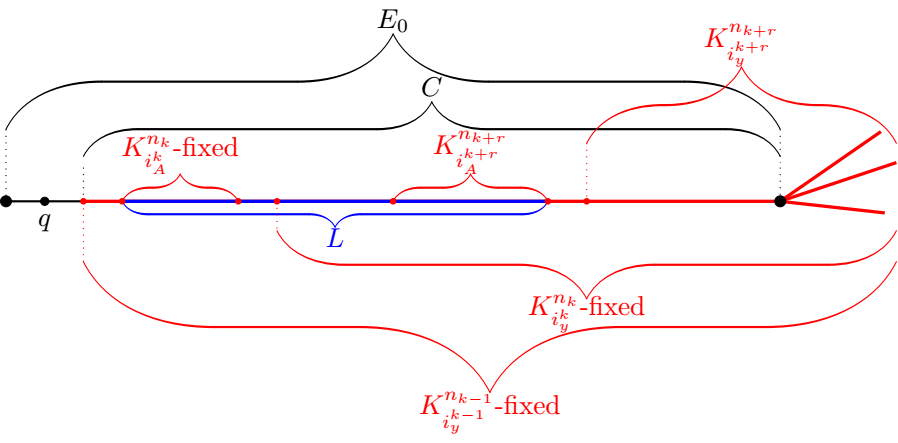}
\caption{Sketch for the case $y\notin E_0$: $K^{n_{k+r}}_{i^{k+r}_y}$ is not necessarily an arc or a subset of $E_0$. $L$ is subset of $K^{n_{k-1}}_{i^{k-1}_y}$ which is invariant under $f^{p_{n_k}}$ and does not contain $E_0$ so $L$ has to grow. Also, the order of $K_{i_A^{k+r}}^{n_{k+r}}$, $K_{i_A^{k}}^{n_{k}}$ and $K_{i_y^{k+r}}^{n_{k+r}}\cap C$ in $E$ is shown.}\label{fig:solenoid}
\end{figure}
\end{proof}
	The following three lemmas cover the case when a nondegenerate recurrent continuum $A$ contains a point $x$ such that $\omega_f(x)$ is a subset of some circumferential set $S(X)$.
	Since $S(X)$ is always a minimal set, we necessarily have that $\omega_f(x)=S(X)$.
	The reader not familiar with structure of graph maps without periodic points is referred to \cite{Shao}.
	Note that Lemma~\ref{lem:contained-cir}	gives us the right to partition the set of all nondegenerate recurrent subcontinua into two disjoint sets.
	The one contains continua whose each point has the same circumferential set as their $\omega$-limit set and the other contains all the remaining continua.
	In what follows, we deal with the latter type of subcontinua, while the former will be considered in Lemma~\ref{lem:recurrent:circ}.

\begin{lemma}\label{lem:periodic-point}
If a nondegenerate continuum $A$ is recurrent and there is $x\in A$ such that $\omega_f(x)$ is not a circumferential set, then $A$ contains a periodic point.
\end{lemma}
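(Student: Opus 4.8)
The plan is to split according to the type of the limit set $\omega_f(x)$, invoking Blokh's classification of $\omega$-limit sets of graph maps together with the structural lemmas already established. By Lemma~\ref{lem:limitset} we always have $A\cap\omega_f(x)\neq\emptyset$, so fix a point $y\in A\cap\omega_f(x)$. If $\omega_f(x)$ is finite, then it is a periodic orbit and $y$ is already the desired periodic point, so from now on assume $\omega_f(x)$ is infinite. An infinite $\omega$-limit set of a graph map is a basic set, a circumferential set, or a solenoidal set; since the hypothesis rules out the circumferential case (recall circumferential sets are minimal, so containment in one would force equality), $\omega_f(x)$ is either a basic set or a solenoidal set. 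I treat these two cases separately.

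In the basic set case I would apply Corollary~\ref{cor:basicset-sub} directly to $A$ and $\omega=\omega_f(x)$. Since $A\cap\omega\neq\emptyset$, there is a component $K_i$ of the associated cycle of graphs with $A\cap\omega\cap K_i\neq\emptyset$, and the corollary yields the dichotomy: either $K_i\subset A$, or $A\cap\omega\cap K_i$ is a finite set of eventually periodic points whose orbits meet $A\cap K_i\cap\omega$ in a periodic point. In the latter subcase we are immediately done. In the former, $f^r(K_i)=K_i$ and $f^r|_{K_i}$ is almost conjugate to a mixing map; since mixing graph maps possess a dense set of periodic points, $K_i\subset A$ contains a periodic point of $f^r$, hence of $f$.

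The solenoid case is where the real work lies. Here I would invoke Lemma~\ref{lem:solenoid-components} to produce a cycle of graphs $K\supset\omega$ together with, since $y\in\omega\cap A$, a component $K_i$ of $K$ satisfying $y\in K_i\subset A$ and $f^N(K_i)=K_i$, where $N$ is the period of $K_i$. It then remains to locate a periodic point inside this invariant subgraph. The key observation is that $\omega\cap K_i$ is an infinite solenoidal $\omega$-limit set of $f^N|_{K_i}$. Were $f^N|_{K_i}$ free of periodic points, the structure theory of periodic-point-free graph maps (see \cite{Shao}) would force all of its infinite $\omega$-limit sets to be circumferential, which is incompatible with the solenoidal set $\omega\cap K_i$ sitting inside $K_i$. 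Hence $f^N|_{K_i}$ has a periodic point, and it lies in $K_i\subset A$, completing the proof.

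The step I expect to be the main obstacle is exactly this last extraction of a periodic point from the invariant subgraph $K_i$ in the solenoid case. Unlike the basic set case, where mixing supplies periodic points for free, here the periodic point cannot come from the solenoid (which contains none), but rather from the renormalization-type structure underlying it. Making the appeal to the classification of periodic-point-free graph maps precise --- in particular ruling out that a subgraph carrying a genuine solenoid could be semi-conjugate to an irrational rotation --- is the delicate point; a self-contained alternative would be a fixed-point argument on the cycle of graphs $\mathcal{O}(K_i)$, using that a self-map of a subgraph carrying nested cycles of graphs with increasing periods cannot be fixed-point-free.
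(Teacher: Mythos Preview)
Your proof is correct and follows the same overall architecture as the paper's: pick $y\in A\cap\omega_f(x)$ via Lemma~\ref{lem:limitset}, dispose of the finite case, and then split into the basic-set and solenoid cases using Lemmas~\ref{lem:basicset-sub}/\ref{lem:solenoid-components} and Corollary~\ref{cor:basicset-sub}. The one substantive difference is in the solenoid endgame. The paper does not appeal to \cite{Shao}; instead it observes that the component $C\subset A$ carries, for $f^p$, a further nested sequence of cycles of graphs with increasing periods, and since components of such cycles are pairwise disjoint subsets of a graph, infinitely many of them must be arcs, so the interval fixed-point property yields a periodic point inside $C$. This is exactly the ``self-contained alternative'' you sketch in your last paragraph, and it is more elementary than routing through the classification of periodic-point-free graph maps. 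Your route via \cite{Shao} is also valid (a graph map with empty $\Per$ has a single minimal $\omega$-limit set semiconjugate to an irrational rotation, which cannot coincide with a solenoid since solenoids sit inside cycles of arbitrarily large period), but it imports more machinery than is needed here. One small wording issue: when you ``apply Corollary~\ref{cor:basicset-sub} directly to $A$ and $\omega=\omega_f(x)$'', note that the corollary is stated for $\omega$ a basic set, not merely a subset of one; you should take $\omega$ to be the ambient basic set $D(K,f)$ (which is itself an $\omega$-limit set of some point of $G$), exactly as the paper does.
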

\begin{proof}
	By Lemma~\ref{lem:limitset}, there is $y\in\omega_f(x)\cap A.$
	If $\omega_f(x)$ is finite and therefore a periodic orbit then there is nothing to prove.
	
	Let $\omega_f(x)$ be some infinite subset of a basic set $D(K,f)$.
	 Let $C\ni y$ be one of graphs described in Lemma~\ref{lem:basicset-sub}, subgraph of some component of $K$, fixed by $f^r$.
 If $\omega_f(x)\cap \Int A\neq \emptyset$ is infinite then Lemma~\ref{lem:basicset-sub} gives that $C\subset A$ so $A$ contains a periodic point in this case.
	In the opposite case $A$ contains a periodic point by Corollary~\ref{cor:basicset-sub}.
	Finally, let $\omega_f(x)$ lie in some maximal solenoid $\omega.$
	By Lemma~\ref{lem:solenoid-components}, there is a component $y\in C\subset A$ of some cycle of graphs with period $p$ containing $\omega.$
	We claim that $C$ contains a periodic point.
	Indeed, since $C$ contains nested sequence of cycles of graphs for $f^p$ containing $\omega$ with higher periods and each cycle has pairwise disjoint components, some of those components are intervals and hence contain a periodic point by fixed-point property.
 \end{proof}

\begin{lemma}\label{lem:growing-arc}
If a nondegenerate continuum $A$ is recurrent and there are $x_1,x_2\in A$ such that $\omega_f(x_1)=S(K,f)$ is circumferential set and $\omega_f(x_2)\neq \omega_f(x_1)$ then there are some $s\geq 0,t\geq 1$ and some arc $L\subset f^s(A)$ intersecting $K$ such that $f^t(L)\supset L.$
\end{lemma}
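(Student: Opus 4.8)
The plan is to isolate the two different limit behaviours carried by $A$ and then grow an arc anchored at $K$ using recurrence. I would first show that $\omega_f(x_2)\cap S(K,f)=\emptyset$. Since $S(K,f)$ is minimal and, by Theorem~\ref{E}, a maximal $\omega$-limit set, a common point $p$ would give $\omega_f(p)=S(K,f)$ by minimality together with $\omega_f(p)\subseteq\omega_f(x_2)$; as $\omega_f(x_2)$ lies in some maximal $\omega$-limit set and $S(K,f)$ is already maximal, this forces $\omega_f(x_2)=S(K,f)$, contradicting the hypothesis. Applying Lemma~\ref{lem:limitset} to $x_1$ and to $x_2$ then produces $y_1\in A\cap S(K,f)$ and $y_2\in A\cap\omega_f(x_2)$, and the separation yields $y_2\notin S(K,f)$.

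Next I would argue that $A\not\subseteq K$. In circumferential dynamics every point of $K$ is asymptotic to the unique minimal set, so has $\omega$-limit set equal to $S(K,f)$ (cf.\ \cite{Shao}); hence $A\subseteq K$ would give $\omega_f(x_2)=S(K,f)$, a contradiction. Since $A$ is a continuum meeting both $S(K,f)\subseteq K$ and $G\setminus K$, connectedness forces it to cross $\partial K$: there are a vertex $v\in\partial K\cap A$ and a nondegenerate arc $P\subseteq A$ with $P\cap K=\{v\}$ protruding from $K$ by a definite amount $\delta=\dist(\text{tip of }P,K)>0$.

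To grow the arc I would put $g=f^{n}$ with $n$ the period of $K$, so that $g$ fixes each component of $K$ and $A$ is recurrent under $\tilde g$ (recurrence passes to powers, as used in the earlier proofs). I would choose an increasing sequence $(m_k)$ with $\dist_H(A,g^{m_k}(A))\to0$ and $\diam g^{m_k}(A)>\diam A/2$. Since $g^{m_k}(A)\to A$ and $A$ protrudes from $K$ by $\delta$ near $P$, each $g^{m_k}(A)$ must protrude by about $\delta$ there, its protruding part being $g^{m_k}$ of a subarc of $A$ anchored, through $S(K,f)$, at a point of $K$ close to $v$. Iterating this and retaining only minimal arcs crossing $\partial K$, I would obtain crossing arcs $L_1,L_2,\dots$ with $g(L_i)\supseteq L_{i+1}$; because $G$ is a finite graph there are finitely many combinatorial types of such arcs, so one type recurs and yields $g^{t'}(L_m)\supseteq L_m$, exactly mirroring the extraction in the proof of Lemma~\ref{lem:connectingcontinua}. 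Taking $L=L_m$, $t=nt'$, and $s$ with $L\subseteq f^{s}(A)$ then gives an arc $L\subseteq f^{s}(A)$ meeting $K$ with $f^{t}(L)\supseteq L$.

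The delicate point, and the step I expect to be the main obstacle, is converting the Hausdorff closeness $g^{m_k}(A)\to A$ into an honest covering $g^{t'}(L_m)\supseteq L_m$. The rotational dynamics on $K$ is non-expanding, its projection to the model circle being an isometry, so no self-covering arc can live inside $K$ and all expansion must be extracted from the excursion $P$. Moreover, unlike in Lemma~\ref{lem:connectingcontinua}, only one end of the crossing arcs is pinned to an invariant set (namely $K$); guaranteeing that the chain $g(L_i)\supseteq L_{i+1}$ persists and that the protrusion length stays bounded below by $\delta$ must be secured by recurrence together with the normalization $\diam g^{m_k}(A)>\diam A/2$, and this is the heart of the argument.
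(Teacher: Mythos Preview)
Your opening moves are fine: $\omega_f(x_2)$ is indeed disjoint from $S(K,f)$, $A$ meets both, and $A\not\subset K$. The gap is in the engine of the argument, exactly where you flag it. With only one end of your arcs pinned to the invariant set $K$, the chain ``$g(L_i)\supset L_{i+1}$'' is not justified. If $L_i$ is a crossing arc with one endpoint $v_i\in K$ and tip $w_i\notin K$, then $g(v_i)\in K$, but nothing prevents $g(w_i)\in K$ as well, so $g(L_i)$ may be entirely contained in $K$ and contain no crossing arc at all. Recurrence of $A$ under $\tilde g$ tells you that \emph{some} part of $g^{m_k}(A)$ protrudes near $P$, but that protruding piece is the image of an unspecified subarc of $A$, not of $L_{k-1}$; so you get no covering relation between successive $L_i$'s. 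Likewise your ``finitely many combinatorial types'' claim needs both ends of the arcs to range over a finite set, and the free tip does not.

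The paper's proof closes this by producing a \emph{second} invariant set $K'$ disjoint from $K$, so that both ends are anchored. If $\omega_f(x_2)$ is itself circumferential, take $K'$ to be its minimal cycle of graphs; otherwise invoke Lemma~\ref{lem:periodic-point} to find a periodic point in $A$ and let $K'$ be its orbit. In either case $K\cap K'=\emptyset$, and $A$ meets components $K_1$ and $K_1'$. Now the family $\mathcal L$ of arcs contiguous to $K_1$ and $K_1'$ is genuinely finite, and since $f^{rr'}$ sends $K_1$ into $K_1$ and $K_1'$ into $K_1'$, the image $f^{rr'}(L_0)$ of any $L_0\in\mathcal L$ with $L_0\subset A$ is a continuum meeting both $K_1$ and $K_1'$, hence contains some $L_1\in\mathcal L$. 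Pigeonhole on $\mathcal L$ then gives the self-covering arc immediately, with no appeal to Hausdorff recurrence or diameter bounds. The missing idea in your sketch is precisely this second anchor.
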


\begin{proof}
	If $\omega_f(x_2)$ is a circumferential set, then
	let $K^\prime$ be a minimal cycle of graphs containing $\omega_f(x_2).$ 
	Since $\omega_f(x_1)$ is the unique minimal set in $K$ and $\omega_f(x_2)$ is the unique minimal set in $K^\prime,$ we have that $K\cap K^\prime=\emptyset.$ 
	If $\omega_f(x_2)$ is not circumferential, then by Lemma~\ref{lem:periodic-point}, $A$ contains a periodic point. 
	Denote its orbit by $K^\prime.$ 
	Obviously, in this case we also have $K\cap K^\prime=\emptyset.$  	
	%}
	Let us enumerate the connected components of $K$ by $K_i,$ $i=1,...,r$ and of $K^\prime$ by $K_i^\prime,$ $i=1,...,r^\prime.$
	By Lemma ~\ref{lem:limitset}, we can assume that $A$ intersects $K_1$ and $K_1^\prime.$
	Consider the finite family $\mathcal{L}$ of all arcs contiguous to $K_1$ and $K_1^\prime.$
%Suppose the lemma is not true.
	There is some $L_0\in\mathcal{L}$ such that $L_0\subset A,\ L_0\cap K_1=\{y\},\ L_0\cap K_1^\prime=\{y^\prime\}.$
%Then $L_1\not\subset f^n(L_1),$ for all $n\geq 1.$
	Since $f^{r^\prime r}(y)\in K_1$ and $f^{r^\prime r}(y^\prime)\in K_1^\prime,$ there is some $L_1\in\mathcal{L},$ $L_1\subset f^{rr^\prime}(L_0).$
	This way, for each $k\geq 1,$ we obtain some $L_k\in \mathcal{L},$ $L_k\subset f^{rr^\prime}(L_{k-1})\subset f^{krr^\prime}(A).$
	Since $\mathcal{L}$ is finite, there have to be some $s,t,$ multiples of $rr^\prime$ and some $L\in\mathcal{L},$ $L\subset f^s(A)$ such that $f^t(L)\supset L.$ 
\end{proof}

\begin{lemma}\label{lem:circumferential}
If a nondegenerate continuum $A$ is recurrent and there is $x\in A$ such that $\omega_f(x)\neq S(K,f),$ where $S(K,f)$ is a circumferential set 
then $C\subset A$ for each component $C$ of $K$ intersecting $A.$
\end{lemma}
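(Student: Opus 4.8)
The plan is to feed the growing arc produced by Lemma~\ref{lem:growing-arc} into an expansion argument: once an arc that is carried into an iterate of $A$ touches a component of the circumferential cycle, the rotation-like dynamics on that component force the arc to swell up and engulf the whole component, after which recurrence pulls the component back into $A$.

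First I would dispose of the trivial case: if no component of $K$ meets $A$ there is nothing to prove, so I fix an arbitrary component $C$ of $K$ with $C\cap A\neq\emptyset$ and relabel the components of $K$ so that $C=K_1$. Pick any $p\in A\cap C$. Since $C\subseteq K$ and $S(K,f)$ is circumferential, $K$ carries no periodic points and $S(K,f)$ is its unique minimal set; consequently every point of $K$ has $\omega$-limit set contained in $\omega(f)\cap K=S(K,f)$, whence $\omega_f(p)=S(K,f)$. Thus $p$ and the hypothesised $x$ (with $\omega_f(x)\neq S(K,f)$) play the roles of $x_1,x_2$ in Lemma~\ref{lem:growing-arc}, which yields $s\geq 0$, $t\geq 1$ (a multiple of the period $r$ of $K$, so that $f^t(C)=C$) and an arc $L\subseteq f^s(A)$ with $L\cap C\neq\emptyset$ and $f^t(L)\supseteq L$. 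Replacing $L$ by a suitable forward image I may also assume that $s$ is a multiple of $t$, keeping $L\cap C\neq\emptyset$ because $t-(s\bmod t)$ is a multiple of $r$.

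The heart of the argument is to show that some iterate of $L$ already contains $C$. Set $W=\overline{\bigcup_{m\geq 0}f^{mt}(L)}$, an $f^t$-invariant continuum with $W\cap C\neq\emptyset$. For $y\in L\cap C$ the points $f^{mt}(y)$ all lie in $W\cap C$, so $\omega_{f^t}(y)\subseteq W$; since $f^t|_C$ inherits from $S(K,f)$ its unique minimal set and has no periodic points, $\omega_{f^t}(y)=S(K,f)\cap C$, and by minimality of the cycle $K$ the smallest subcontinuum of $G$ containing $S(K,f)\cap C$ is $C$ itself, so $C\subseteq W$. To upgrade this to $C\subseteq f^{Mt}(L)$ for a finite $M$ I would invoke the structure of graph maps without periodic points (\cite{Shao}): $f^t|_C$ is semiconjugate to an irrational rotation of a circle, so the nested continua $f^{mt}(L)\cap C$ sweep arcs of $C$ whose rotation images wrap around and cover $C$ after finitely many steps. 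This filling step---passing from ``$C$ lies in the closure of the increasing union'' to ``$C$ lies in a single iterate''---is the main obstacle, since for abstract nested compacta the former does not imply the latter; it is precisely the rotation-like dynamics on the circumferential component that forces the growth to cover $C$ in finitely many steps rather than merely accumulate on it.

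Once $C\subseteq f^{Mt}(L)\subseteq f^{Mt+s}(A)$ is established, the transfer to $A$ is routine. Because $f^t(C)=C$ and $Mt+s$ is a multiple of $t$, say $Mt+s=K_0t$, we get $C=f^{jt}(C)\subseteq f^{jt+K_0t}(A)=f^{(j+K_0)t}(A)$, that is, $C\subseteq f^{lt}(A)$ for every $l\geq K_0$. Since $A$ is recurrent under $f^t$ (recurrence passing to iterates, as already used in Lemma~\ref{lem:connectingcontinua} and Corollary~\ref{cor:basicset-sub}) there are $l_i\to\infty$ with $f^{l_it}(A)\to A$ in the Hausdorff metric, each term containing the fixed compact set $C$; taking the limit gives $C\subseteq A$. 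As $C$ was an arbitrary component of $K$ meeting $A$, this proves the lemma.
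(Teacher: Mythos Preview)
Your overall strategy matches the paper's: invoke Lemma~\ref{lem:growing-arc} to obtain a growing arc $L\subset f^s(A)$ touching $C$ with $L\subset f^t(L)$, argue that some iterate $f^{Mt}(L)$ swallows $C$, and then use recurrence to pull $C$ back into $A$. The final recurrence step is fine and essentially identical to the paper's.

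There are two soft spots. First, the claim that ``by minimality of the cycle $K$ the smallest subcontinuum of $G$ containing $S(K,f)\cap C$ is $C$ itself'' is not what minimality of $K$ says: $K$ is minimal among \emph{invariant cycles of graphs}, which does not a priori rule out a smaller continuum in $G$ (possibly using edges outside $C$) containing $S(K,f)\cap C$. Fortunately you do not actually rely on this, since you immediately pass to the rotation structure for the filling step.

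Second, and more substantially, the filling step that you correctly flag as ``the main obstacle'' is exactly where the paper does real work, and your sketch (``nested continua $f^{mt}(L)\cap C$ sweep arcs \ldots'') is not yet a proof: $f^{mt}(L)\cap C$ need not be connected, and the rotation semiconjugacy alone does not automatically turn ``closure of the nested union contains $C$'' into ``some single iterate contains $C$''. The paper's device is concrete: pick a free arc $J\subset C$ and three monotone points $y_1,y_2,y_3\in S(K,f)\cap\Int J$ with disjoint interval neighbourhoods $U_1,U_2,U_3$; since $\omega_{f^t}(y)=S(K,f)\cap C$ for $y\in L\cap C$, some $f^{mt}(L)$ meets all three $U_i$, and connectedness of $f^{mt}(L)$ forces the middle one into $\Int f^{(m+i)t}(L)$ for all $i\ge 0$. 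Now a finite union $\bigcup_{i=0}^l f^{ti}(U_j)$ covers $S(C,f^t)$ and misses at most finitely many fibres of the almost $1$-$1$ factor map to the circle, which yields $C\subset f^{ti}(L)$ for large $i$. This three-point trick is the missing ingredient in your argument; once you insert it, your proof and the paper's coincide.
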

\begin{proof}
	Take some component $C$ of $K$ such that there is $x^\prime\in A\cap C.$ 
	We will show that $C\subset A.$
	If $K$ has period $r,$ then $\omega_{f^r}(x^\prime)=S(K,f)\cap C=S(C,f^r).$ 
	In particular, $\omega_{f^r}(x)\neq\omega_{f^r}(x^\prime)$ so we can apply Lemma~\ref{lem:growing-arc}.
	Hence, there are some $s\geq 0, t\geq 1,$ multiples of $r,$ and some arc $L\subset f^s(A)$ intersecting $C$ such that $f^t(L)\supset L.$ 
	 
	Take any $y\in L\cap C.$ 
		Since %$\omega_f(x_1)\cap C$ 
$\omega_{f}(x')\cap C$ is the unique minimal set of $(C,f^r|_{C})$~(e.g. see \cite{Shao}),  	it is totally minimal, $r$ divides $t$ and $(C,f^r|_{C})$
is almost 1-1 extension of an irrational rotation,
%and $K_1$ is invariant subgraph free of periodic points,
we have that 
\[
\omega_{f^t}(y)=\omega_{f^r}(y)=\omega_f(x')\cap C.
\]
	Since the above intersection is infinite, there is some free arc $J\subset C$ and a sequence $y_1,y_2,y_3\in\omega_f(x^\prime)\cap\Int J$, monotone in the ordering of $J.$ 
	Let $U_1,U_2,U_3$ be pairwise disjoint connected open neighborhoods (that is, open intervals) of $y_1,y_2,y_3$ in $J,$ respectively. 
	There are $m_1,m_2,m_3,$ such that $f^{tm_i}(y)\in U_i,$ for $1\leq i\leq 3.$ 
	For $m=\max\{m_1,m_2,m_3\},$ since consecutive iterations of $L$ form a nested sequence:
\begin{equation}
L\subset f^t(L)\subset \ldots \subset f^{mt}(L)\subset \ldots\label{eq:recLnest}
\end{equation}
and %since $f^{mt}(L)$ is connected we have 
	therefore $f^{mt}(L)\cap U_j\neq \emptyset$ for $j=1,2,3$.
%{\color{green} Should "since $f^{mt}(L)$ is connected" belong to the following sentence?}
	 Since $f^{mt}(L)$ is connected we immediately have that $U_j\subset \Int f^{t(m+i)}(L)$ 
	 for at least one $1\leq j\leq 3$ and all $i\geq 0$.
	 For the above $j$ there is $l$ such that $\cup_{i=0}^{l}f^{ti}(U_j)\supset \omega_{f^t}(y)=S(C,f^t)$, and since $\cup_{i=0}^{l}f^{ti}(U_j)$ possibly does not cover at most finitely many fibers of the associated 
	 	almost 1-1 extension $(C,f^r|_{C})$ of an irrational rotation, we obtain that $C\subset f^{ti}(L)$ for all sufficiently large $i$.	 	
 By recurrence of $A$, we obtain that $C\subset f^s(A)$
	
	Since $A$ is recurrent under $f^r$, $r$ divides $s$ and for each $n\geq 0$ we have $C=f^{nr}(C)\subset f^{s+nr}(A)$,  we conclude $C\subset A.$
%Since $\{f^{nt}(L)\}_{n\geq 0}$ is increasing sequence and $f^s(A)$ is recurrent under $f^t$, the lemma is proved for $f^s(A)$ and hence also for $A$.
\end{proof}

\begin{lemma}\label{lem:similarimages}
Let $A\in \Rec(\tilde{f})$ be a nondegenerate continuum. Then there exist some $k\geq 0$ and some $\epsilon>0$ such that for each $m\geq k,$ the condition $\dist_H(f^m(A),f^k(A))<\epsilon$ implies following two properties: \\
Firstly, that  for each edge $E$ of $G$, each its endpoint $e$ and
for each side $T$ of $e$ with elements contained in $E,$
the following two statements are equivalent:
\begin{enumerate}[(1)]
\item there is a one-sided neighborhood $V_k\in T$ with $V_k\subset f^k(A)$;
\item there is  $V_m\in T$ with $V_m\subset f^m(A).$
\end{enumerate}  
Secondly, for each edge $E$ of $G$, if $C$ is a component of $\Int E\cap f^k(A)$ and $D$ is a component of $\Int E\cap f^m(A)$ and  $C\cap D\neq \emptyset$
then $D$ is a unique component of $\Int E\cap f^m(A)$ that $C$ intersects and vice-versa.
\end{lemma}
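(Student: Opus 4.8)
The plan is to reduce both assertions to a finite, combinatorial statement about the ``shape'' of the iterates, and then to stabilise that shape by an extremal choice of $k$. The one fact that makes everything finite is a consequence of connectedness: since each iterate $f^n(A)$ is a connected subgraph, for every edge $E$ the set $f^n(A)\cap\Int E$ has at most two components, each an arc abutting an endpoint of $E$ (a component not abutting either endpoint of $E$ would be a proper clopen subset of the connected set $f^n(A)$, which is impossible), the alternative being $f^n(A)\supseteq E$. Thus the shape of $f^n(A)$ is encoded by finite data: the set $\mathcal V(f^n(A))$ of vertices lying in $f^n(A)$, the set $\mathcal F(f^n(A))$ of edges contained in $f^n(A)$, and, for each endpoint $e$ of each edge $E$, a bit recording whether $f^n(A)$ contains a one-sided neighbourhood of $e$ inside $E$ (``$f^n(A)$ pokes into $E$ at $e$''). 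I would then observe that assertion~(1) is precisely the requirement that the poke-data of $f^m(A)$ and $f^k(A)$ agree, and that assertion~(2) follows once the full-edge data agree and the finitely many arcs are matched; the arc matching is automatic once $\epsilon$ is smaller than the minimal length and minimal gap of the (finitely many) arcs of $f^k(A)$.

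Next I would fix a small $\epsilon_0>0$ and use recurrence to make $R_0=\{n:\dist_H(f^n(A),A)<\epsilon_0\}$ infinite. The conditions ``$v\in B$'' and ``$E\subseteq B$'' are closed in $C(G)$ (a Hausdorff limit of continua containing a fixed compact set still contains it), so their negations are open: if $v\notin B_0$ then every continuum within $\dist(v,B_0)$ of $B_0$ omits $v$, and similarly for $E\not\subseteq B_0$ once $\epsilon$ is below half the smallest gap of $B_0$ in $E$. I would therefore choose $k\in R_0$ minimising $|\mathcal V(f^n(A))|+|\mathcal F(f^n(A))|$, and then take $\epsilon$ below all of these finitely many positive thresholds attached to $f^k(A)$, and also below $\epsilon_0-\dist_H(f^k(A),A)$ so that every competing $m$ again lies in $R_0$. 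For such $m$ the openness of the negations gives $\mathcal V(f^m(A))\subseteq\mathcal V(f^k(A))$ and $\mathcal F(f^m(A))\subseteq\mathcal F(f^k(A))$, while minimality forces equality. Together with the two-component bound this yields assertion~(2): on a commonly-full edge both iterates contribute the single component $\Int E$, and on a commonly-non-full edge the at most two arcs biject because $\epsilon$ lies below the gaps; crucially the only way assertion~(2) can fail, namely one iterate giving a single spanning arc while the other splits, requires the fullness of $E$ to disagree, which is now excluded.

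Finally, for assertion~(1) I would compare pokes edge by edge. If the endpoint $e$ is not in the (common) vertex set, neither iterate pokes there; if $E$ is a common full edge, both poke at every endpoint; so both cases are settled by the previous paragraph. The remaining situation is a poke into a \emph{non-full} edge at an endpoint carried by the image, where the graph may let the single arc ``switch'' which endpoint it abuts without changing $\mathcal V$ or $\mathcal F$. When $e$ is absent from the image on the gap side this is once more killed by vertex matching, and when $f^k(A)$ exhibits an interior gap of definite size the connectedness of $f^m(A)\subseteq N(f^k(A),\epsilon)$ forbids $f^m(A)$ to bridge that gap, forcing the same poke pattern. The hard core, which I expect to be the main obstacle, is the case in which $E$ is a full edge of $A$ whose two endpoints are branch points carried by every nearby image (so $E$ sits on a cycle of $A$): then arbitrarily close iterates can be almost full with a tiny gap sliding between the two ends, producing type ``$L$'' versus type ``$R$'' pokes that soft semicontinuity matches in neither direction. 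To close this case I would use recurrence decisively, passing to a suitable power of $f$ and invoking the rigidity of the invariant structures that can support such an edge — cycles of graphs and the almost one-to-one extensions of irrational rotations furnished by Blokh's theory together with Lemmas~\ref{lem:solenoid-components} and~\ref{lem:circumferential} — to rule out a genuinely oscillating gap, thereby exhibiting a type-stable $k$ and $\epsilon$.
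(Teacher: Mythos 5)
Your reduction to finite combinatorial data is sound as far as it goes, and the semicontinuity-plus-minimization trick correctly stabilises the monotone-\emph{decreasing} data: ``$v\in f^n(A)$'' and ``$E\subseteq f^n(A)$'' are closed conditions, so choosing $k\in R_0$ minimising $|\mathcal V|+|\mathcal F|$ and $\epsilon$ below the relevant thresholds does force $\mathcal V(f^m(A))=\mathcal V(f^k(A))$ and $\mathcal F(f^m(A))=\mathcal F(f^k(A))$. The genuine gap is that the poke data are \emph{not} semicontinuous in either direction and are not controlled by your minimised quantity. The concrete failure mode you have not excluded is the appearance of a \emph{new short poke}: take a vertex $e\in\mathcal V(f^k(A))$ and an edge $E\ni e$ with $E\notin\mathcal F(f^k(A))$ in which $f^k(A)$ contains no one-sided neighbourhood of $e$ (so $E\setminus f^k(A)$ has a component abutting $e$ of some definite length), while $f^m(A)$ contains a one-sided neighbourhood of $e$ in $E$ of length $d<\epsilon$. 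This configuration keeps $\dist_H(f^m(A),f^k(A))$ as small as you like (the new arc lies within $d$ of the common point $e$), changes neither $\mathcal V$ nor $\mathcal F$, bridges no gap of $f^k(A)$, and respects all your length and gap thresholds --- yet it violates the implication $(2)\Rightarrow(1)$ of the first assertion, and through it the component matching in the second. Your discussion of assertion (1) only rules out gap-bridging and vertex loss, neither of which is at stake here. Relatedly, the case you flag as the ``hard core'' (a full edge of $A$ with a sliding gap) is already dispatched by your own thresholds once $k$ is chosen so that $f^k(A)$ is non-full on $E$: the comparison continuum must omit the midpoint of the reference gap and must meet each reference arc of length at least $2\epsilon$ near its midpoint, which pins the gap down. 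No appeal to Blokh's structure theory, powers of $f$, or Lemmas~\ref{lem:solenoid-components} and~\ref{lem:circumferential} is needed or appropriate; the lemma is purely metric and combinatorial.

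The paper closes the new-poke case (together with the transitions ``one component becomes two'' and ``full becomes non-full'') by a finite iteration rather than a one-shot extremal choice: it starts with $A$ and a threshold built from the lengths of the components of $\Int E\cap A$ \emph{and} of $E\setminus A$, and whenever some iterate within half that threshold exhibits a forbidden transition, it replaces the reference by that iterate and halves the threshold. Because each component of $\Int E\cap f^n(A)$ must abut an endpoint of $E$ (otherwise it would be clopen in the connected set $f^n(A)$), there are at most two components per edge, fullness can only be lost, and pokes can only be gained relative to a reference all of whose arcs have length at least $2\epsilon$; so the per-edge complexity is monotone and bounded and the procedure stops after at most two or three updates per edge. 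To salvage your one-shot scheme you would have to add the number of pokes (equivalently, of components per edge) to the optimisation and argue that an extremal reference admits an $\epsilon$ for which no new poke can appear; as written, the proposal does not establish the first assertion.
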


\begin{proof}
Pick some edge $E_1$ of $G.$ 
	Let $\epsilon_1^\prime$ be the minimal value among the lengths of components of $\Int E\cap A$ and of $E\setminus A$ (we take into account only nonempty ones).
Note that if $\dist_H(A,f^p(A))<\frac{1}{2}\epsilon_1^\prime,$ for some $p>0$ and $E_1\subset f^p(A)$ then we must also have $E_1\subset A$.

	If for each $n$ such that $\dist_H(A,f^n(A))<\frac{1}{2}\epsilon_1^\prime,$ 
%$E\subset f(A)\iff E\subset f^n(A)$ and $A\cap \Int E_1$ and $f^n(A)\cap \Int E_1$ have the same number of components, 
all the conditions from the statement regarding $E_1$ are valid then set $\epsilon_1=\epsilon_1^\prime$ and we proceed to the next edge.
	Now suppose there is some $p$ such that $\dist_H(A,f^p(A))<\frac{1}{2}\epsilon_1^\prime,$ but some of the conditions are not fulfilled.
	This is only possible if  one of the following cases arose:
	\begin{enumerate}
\item $E_1\subset A$ and $E_1\setminus f^p(A)\neq\emptyset.$ 
\item $E_1\setminus A\neq\emptyset,$  $\Int E_1\cap A $ has one component and $\Int E_1\cap f^p(A)$ has two components.
\item $\Int E_1\cap A=\emptyset$ and $\Int E_1\cap f^p(A)\neq\emptyset.$
 \end{enumerate}
%	Note that, from the choice of $\epsilon_1^{\prime},$ the first case is the only possibility for the condition regarding the uniqueness of intersections to be unfulfilled.
	
%	Now 
	Repeat the process with $f^p(A)$ instead of $A$ and with new $\epsilon_1^{\prime\prime}<\frac{\epsilon^\prime_1}{2}$ calculated analogously.    	
	If for each $n\geq p,$ $\dist_H(f^p(A),f^n(A))<\frac{1}{2}\epsilon_1^{\prime\prime}$ implies that %$E_1\subset f^p(A)\iff E_1\subset f^n(A)$ and $f^p(A)\cap \Int E_1$ and $f^n(A)\cap \Int E_1$ have the same number of components 
	all the conditions from the statement regarding $E_1$ are valid then set $\epsilon_1=\epsilon_1^{\prime\prime}$ and we proceed to the next edge.
	The opposite is only possible if there is some $r\geq p$ such that { $\dist_H(f^p(A),f^r(A))<\frac{1}{2}\epsilon_1^{\prime\prime}$, while}:
\begin{enumerate}
\item $\Int E_1\cap A=\emptyset$ and $\Int E_1\cap f^p(A)$ has one component and $\Int E_1\cap f^r(A)$ has two components, or
\item {$E_1\subset A,$} $\Int E_1\cap f^p(A)$ has one component and $\Int E_1\cap f^r(A)$ has two components.
\end{enumerate}	

	In the next iteration of this procedure, using analogous $\epsilon_1<\frac{\epsilon_1^{\prime\prime}}{2},$ the conditions for edge $E_1$ are certainly fulfilled.
	Now we proceed with another edge $E_2$ but starting with the last obtained iterate of $A$ and $\epsilon_2<\frac{\epsilon_1}{2}$ calculated analogously. 
	We are done after finitely many steps by setting $\epsilon=\frac{1}{2}\epsilon_s$ for last considered edge $E_s$ and $k$ obtained from the last observed iterate of $A.$
\end{proof}

\begin{lemma}\label{lem:containscycles}
Let $A\in \Rec(\tilde{f})$ be a nondegenerate continuum. For each $x\in A$ let $M_x\supset \omega_f(x)$ be a maximal $\omega$-limit set. If $M_x$ is not circumferential set for some $x$
	or there are $x,y\in A$ such that $M_x\neq M_y$ then there exist $m,k\geq0$ such that $\omega_{f^m}(x)\subset f^k(A)$ for each $x\in f^k(A).$
Even more, $\bigcup_{x\in f^k(A)}\omega_{f^m}(x)$ is closed and for each $x\in f^k(A),$ $f^k(A)$ contains 
%some cycle of graphs containing 
some periodic points of $\tilde{f}^m$, containing
$\omega_{f^m}(x).$
\end{lemma}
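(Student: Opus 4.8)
The plan is to run every point of $A$ through the case analysis built up in this section so as to exhibit, inside one fixed iterate of $A$, a finite family of periodic subgraphs that absorb all the limit dynamics. First I would stabilise the iterate: apply Lemma~\ref{lem:similarimages} to get $k\ge 0$ and $\epsilon>0$ such that the edgewise intersection pattern of $f^m(A)$ agrees with that of $f^k(A)$ whenever $m\ge k$ and $\dist_H(f^m(A),f^k(A))<\epsilon$. Since $A\in\Rec(\tilde f)$ there are infinitely many such $m$, and $B:=f^k(A)$ is again a nondegenerate recurrent continuum. Because $\omega_f(f(z))=\omega_f(z)$, the maximal $\omega$-limit sets realised by the points of $B=f^k(A)$ are exactly those realised by the points of $A$, so it suffices to locate carriers inside $A$ and then transport them into $B$.

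Next I would classify each $x\in A$ by the type of a maximal $\omega$-limit set $M_x\supset\omega_f(x)$ and invoke the matching lemma to push a periodic carrier into $A$. If $M_x$ is a basic set, Corollary~\ref{cor:basicset-sub} yields a mixing component $K_i\subset A$, or else a finite, eventually periodic intersection together with a periodic point of its orbit inside $A$. If $M_x$ is solenoidal, Lemma~\ref{lem:solenoid-components} supplies a cycle of graphs whose components through the points of $\omega\cap A$ lie in $A$. If $M_x$ is a periodic orbit, Lemma~\ref{lem:limitset} places a point of that orbit in $A$. Finally, if $M_x=S(K,f)$ is circumferential, the standing hypothesis rules out the purely circumferential situation, so there is $y\in A$ with $\omega_f(y)\neq S(K,f)$, and Lemma~\ref{lem:circumferential} forces every component of $K$ meeting $A$ to be contained in $A$. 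In every case $A$ contains a periodic subgraph (or periodic point) carrying the part of $\omega_f(x)$ that lies in $A$.

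I would then argue that only finitely many carriers $W_1,\dots,W_N\subset A$ arise, using that distinct maximal $\omega$-limit sets are disjoint and that the absorbed components occupy pairwise disjoint subarcs of the finitely many edges covering $A$, so their number is bounded by the edge structure. Let $m$ be a common multiple of the periods of $W_1,\dots,W_N$, so that each $W_i$ is a fixed point of $\tilde f^{\,m}$, hence in particular a periodic point of $\tilde f^{\,m}$; choosing $k$ to be a multiple of $m$ gives $W_i=f^k(W_i)\subset f^k(A)=B$. For arbitrary $x\in B$ the set $\omega_{f^m}(x)$ is a single $f^m$-invariant piece of $\omega_f(x)$; applying Lemma~\ref{lem:limitset} to $f^m$ shows it meets $A$, and combined with the carrier containments above this places all of $\omega_{f^m}(x)$ inside one $W_i\subset B$, which is the required periodic continuum of $\tilde f^{\,m}$ containing it. Since $\bigcup_{x\in B}\omega_{f^m}(x)$ meets each $W_i$ exactly in the (closed) recurrent part of $f^m|_{W_i}$, it is a finite union of closed sets and hence closed.

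The step I expect to be the main obstacle is the solenoidal case together with the transfer from \emph{absorbed in $A$} to \emph{valid for every point of $B$}. A solenoid admits no minimal carrier cycle, so Lemma~\ref{lem:solenoid-components} must be applied edge by edge and glued across the finitely many extreme points of $\omega\cap A$; and it is precisely Lemma~\ref{lem:similarimages} that guarantees the carrier structure detected in $A$ persists in the iterate $B$ and accounts uniformly for the transient and boundary points. Establishing the finiteness of the carriers, so that a single common power $m$ exists, is the other delicate point.
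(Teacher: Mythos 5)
Your overall strategy---stabilise an iterate via Lemma~\ref{lem:similarimages}, classify points of $A$ by the type of their maximal $\omega$-limit set, and absorb the limit dynamics into finitely many periodic carriers---matches the skeleton of the paper's argument. But there is a genuine gap at the step where you claim that ``in every case $A$ contains a periodic subgraph (or periodic point) carrying the part of $\omega_f(x)$ that lies in $A$.'' This fails precisely in the basic-set case that Corollary~\ref{cor:basicset-sub} leaves open: when a basic set $B$ meets $f^k(A)$ only in $\partial f^k(A)$, the corollary tells you that the finitely many points of $f^k(A)\cap B\cap K_i$ are \emph{eventually} periodic and that $f^k(A)$ contains \emph{some} periodic point of their orbits---but such a point $y$ need not itself be periodic, and no periodic subgraph of $\tilde f$ containing $y$ need lie inside $f^k(A)$ (the mixing component $C_y\ni y$ is explicitly \emph{not} contained in $f^k(A)$ in this case). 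So $y\in\omega(f)\cap f^k(A)$ has no carrier $W_i$, your finite family does not cover $\omega(f)\cap f^k(A)$, and the inclusion $\omega_{f^m}(x)\subset f^k(A)$ cannot be read off from carrier containments. The paper's proof spends most of its length on exactly this obstruction: it introduces the set $Y_0'$ of such bad extreme points, the shrinking constants $\epsilon_0>\epsilon_1>\cdots$, and an induction showing that each passage to a further iterate $f^{k'}(A)$ trades a bad extreme point for a periodic boundary point, terminating because a subcontinuum of $G$ has at most $2r$ boundary points. Without some version of this argument your choice of a single common period $m$ for ``all carriers'' does not exist.

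Two further points. First, you also need the reverse inclusion $\omega(f)\cap f^{k+m}(A)\subset\omega(f)\cap f^k(A)$ (the paper proves it by a separate contradiction counting how many components $C_i$ of the relevant cycle of graphs the iterates of $A$ can meet); without it, your final claim that $\omega_{f^m}(x)$ for arbitrary $x\in f^k(A)$ lands wholly in one $W_i$ is unjustified---e.g.\ when $\omega_{f^m}(x)$ is a periodic orbit of $f^m$ with several points, Lemma~\ref{lem:limitset} only puts \emph{one} of them in $f^k(A)$. Second, there is a mild circularity in fixing $k$ via Lemma~\ref{lem:similarimages} and later asking $k$ to be a multiple of $m$, which itself is defined from carriers found inside $f^k(A)$; this is repairable but should be arranged in the right order, as the paper does by choosing $m$ with $p\mid m$ and $\dist_H(f^k(A),f^{k+m}(A))$ small \emph{after} $k$ is fixed.
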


\begin{proof}
	Let $\epsilon>0$ and $k\geq 0$ be provided by Lemma~\ref{lem:similarimages}. Clearly $f^k(A)\in \Rec(\tilde{f})$.
	For each edge $E$ of $G$ such that $E\cap f^k(A)\cap\omega(f)\neq\emptyset$ and $E\setminus f^k(A)\neq\emptyset$ and for each component $D$ of $E\cap f^k(A)$ such that $D\cap\omega(f)\neq\emptyset$ let $y_D^i,$ $i\in\{1,2\}$, be minimal and maximal points (possibly $y_D^1=y_D^2$) of $\omega(f)\cap D$ in the ordering of $D$, i.e. each $y_D^i$ is at the boundary of the convex hull of $\omega(f)\cap D$.
	Denote by $\mathcal{D}$ the family of all such components $D$, and let $Y_0$ be the set of all such points $y_D^i$ (over all components $D\in \mathcal{D}$). 
	We distinguish two cases.

	First, suppose that for each $y\in Y_0$ and $z$ such that $y\in\omega_f(z),$ there is a graph $C\ni y,$ $C\subset f^k(A),$ being a periodic point of $\tilde{f}$, whose orbit contains $\omega_f(z)$.
	For technical reasons, we consider periodic orbit as (degenerated) cycle of graphs. %{\color{green}...periodic points as (degenerated) periodic graphs.}
%	We pick the components of minimal cycle containing $\omega_f(z),$ if such exists. {\color{blue}If minimal cycle does not exist, i.e. $\oemga_f(z)$ is a solenoid, we take period of any cycle whose component satisfies  $C\subset f^k(A)$. 
%		
	Denote by $p$ the least common multiple of the periods of those graphs.
	Since $f^k(A)$ is recurrent under $f^p,$ there is some $m\geq 1,$ such that $p\vert m$ and $\dist_H(f^k(A),f^{k+m}(A))<\epsilon.$
	By the virtue of construction, we have $\omega(f)\cap f^k(A)\subset\omega(f)\cap f^{k+m}(A)$ because $C\subset f^k(A)\cap f^{k+m}(A)$ for each periodic continuum $C$ considered for each $y_D^i$ before and
	by the definition of $k,\epsilon$ provided by Lemma~\ref{lem:similarimages}.
%{\color{red}Here is the moment where we need uniqueness of intersection of components, to ensure that there will be no hole between periodic graphs
%around two extremal $y_D$. Otherwise, if we use the statement literaly, hole can arise here and dissapear in other place.}

	Now suppose that there is some $\emptyset\neq Y_0^\prime\subset Y_0$ such that, for each $y\in Y_0^\prime$, $f^k(A)$ does not contain any graph periodic under $\tilde{f}$ and containing $y$.
	Fix any $y\in Y_0^\prime.$ By definition, $y$ is not a periodic point.	
	Observe that $y$ is an eventually periodic element of some basic set $B$.
	Indeed, $y$ has to be an element of some basic set $B$ by Lemmas~\ref{lem:circumferential} and~\ref{lem:solenoid-components} and additionally,
	by Corollary~\ref{cor:basicset-sub}, it has finite orbit.
	This shows that $y$ is an eventually periodic point, that is, it is eventually mapped onto a periodic point $y^\prime\in C_y\cap\partial f^k(A)$,
	 where $C_y\ni y$ is a subgraph described in Lemma~\ref{lem:basicset-sub}, periodic under $\tilde{f}$ with period $r$.
	 %component of minimal cycle of graphs containing $B$ and having period $r$, where $r$ is the period from Corollary~\ref{cor:modelBS} 
	 (i.e. $f^r|_{C_y}$ lifts to a mixing map by almost conjugacy).
	We also have that $\vert f^k(A)\cap C_y\cap B\vert>1$ since $y$ itself is not periodic.
If $(\partial f^k(A))\setminus (C_y\cap B)\neq\emptyset$ then denote $\epsilon_0^y=\frac{1}{2}\min\left\{\epsilon,\dist_H((\partial f^k(A))\setminus (C_y\cap B) , C_y\cap B)\right\},$ otherwise $\epsilon_0^y=\frac{1}{2}\epsilon.$ 
	Observe that if $z\in f^k(A)\cap C_y\cap B$ and for some $j\geq 0$ we have $f^{k+j}(z)\in C_y\cap B$ and $\dist(f^{k+j}(z),f^k(A))<\epsilon_0^y$ then $f^{k+j}(z)\in C_y\cap \partial f^k(A)$.
By definition $Y_0^\prime$ is finite, so we may define $\epsilon_0=\min\{\epsilon_0^y\vert y\in Y_0^\prime\}.$
	
	Denote by $p$ the least common multiple of the periods of 
 periodic points of $(C(G),\tilde{f})$, which are selected for points in $Y_0\setminus Y_0^\prime$ contained in $A$ and of the periods of periodic points from $\partial f^k(A)$.
	{ Note that if $B$ is a basic set with $B\cap f^k(A)\subset\partial f^k(A)$ then $p$ 
	 may be smaller than period of graphs described in Lemma~\ref{lem:basicset-sub} and containing $B$ (e.g. there is a point from $Y_0^\prime$ in the intersection of those subgraphs periodic under $\tilde{f}^p$).
	%	  {\color{green} is it necessarily in the boundary? The cycles in the range of almost conjugacy touch in boundary, but the original ones may share whole subgraph which is a fiber of almost conjugacy?, so the periodic point can be in the interior?}
	 If it is the case, we change $p$ by its multiple, so that it becomes a multiple of a period of mentioned graphs, periodic under $\tilde{f}$ and containing $B$.}
	Again, since $f^k(A)$ is recurrent under $f^p,$ we may choose some $m\geq 1,$ such that $p\vert m$ and $\dist_H(f^k(A),f^{k+m}(A))<\epsilon_0.$
	If we have 
	\begin{equation}
	\omega(f)\cap f^k(A)\subset\omega(f)\cap f^{k+m}(A),\label{includion:lemma:k+m}
	\end{equation} we are done with this part of the proof.

%{\color{red}I am not completely happy from explanation below, but now believe it may be complete argument.}
	Now suppose that the inclusion \eqref{includion:lemma:k+m} does not hold.
	By Lemma~\ref{lem:similarimages} and definition of $\epsilon_0$, there is 1-1 correspondence between connected components of $f^k(A)$ and $f^{k+m}(A)$ in each edge of the graph, therefore there is some $y\in Y_0^\prime$ such that $y\in \omega(f)\cap f^k(A)\setminus\omega(f)\cap f^{k+m}(A)$.
	Let $C\ni y$ be a subgraph described in Lemma~\ref{lem:basicset-sub}, periodic under $\tilde{f}$ with orbit containing $B$ and let $r$ be its period.
%	{\blue at the moment I do not see "clearly" part, except period of periodic point is $m$ or smaller. Is there any connetion between $m$ here and the one in Corollary~\ref{cor:basicset-sub}?}
	We claim that $f^m(C)=C$.
Indeed, since $f^k(A)\cap B\cap C\neq\emptyset$ and $C\not\subset f^k(A)$ and since $f^k(A)\in\Rec(\widetilde{f^r}),$ Corollary~\ref{cor:basicset-sub} implies that $\partial f^k(A)$ contains some periodic point from $C.$
	By definition, $f^m$ fixes $C$, so the claim holds.
	Observe that, $(\Int f^{k+m}(A)) \cap C\cap B\neq \emptyset$
		would lead to a contradiction with recurrence of $f^k(A)$.  Namely, Lemma~\ref{lem:basicset-sub} would imply that for each $n\geq 1,$ $C\subset f^{k+nm}(A)$ and then $C\subset f^k(A)$ which is impossible.
	For a periodic point $z\in\partial f^k(A)\cap C$ we also have $z\in\partial f^{k+m}(A)$ since 
	$z$ is fixed by $f^m$.
	Note that $(f^k(A)\cup f^{k+m}(A))\cap C\cap B\subset \partial f^k(A)\cup  \partial f^{k+m}(A)$ and if  there existed $z\in (\partial f^{k+m}(A)\cap C\cap B)\setminus(\partial  f^{k}(A)\cap C)$ then we would have $\dist(z,f^k(A))\geq\epsilon_0$ and as a consequence $\dist_H(f^k(A),f^{k+m}(A))\geq\epsilon_0$ or we would have $z\in \Int f^k(A)$ which in both cases is a contradiction. 
This implies $f^{k+m}(A)\cap C\cap B\subset f^k(A)\cap C\cap B$,
which together with $y\in C\cap B\cap \partial f^k(A)\setminus f^{k+m}(A)$ leads to $\vert f^{k+m}(A) \cap C\cap B\vert<\vert f^k(A) \cap C\cap B\vert$.
		
	Let $D\in\mathcal{D}$ be such that $y=y_D^i$, for some $i\in\{1,2\}$.
	If $D\cap \omega(f)\cap f^{k+m}(A)\neq\emptyset,$ pick its new maximal point $y^\prime.$
	If it happens that $f^{k+m}(A)$ does not contain any subgraph, periodic under $\tilde{f}^m$ and containing $y^\prime$ then again $y^\prime\in B^\prime$ for some basic set $B^\prime.$
	Let $C^\prime\ni y$ be  subgraph described in Lemma~\ref{lem:basicset-sub} which is a periodic point of $\tilde{f}^m$ and whose $f^m$-orbit contains $B^\prime.$
	%a component of the minimal cycle of graphs under $f^m$ containing $B^\prime.$
	Observe that $C\neq C^\prime$ since  $\Int f^k(A)\cap C\cap B=\emptyset$.
	Now repeat the process with $Y_1,Y_1^\prime$ and $\epsilon_1<\frac{\epsilon_0}{2}$ calculated analogously.
	Observe that the starting condition provided by Lemma~\ref{lem:similarimages} is preserved.
	In finitely many steps of the procedure, we see that there are indeed some $k^\prime$ and $m^\prime$ such that $\omega(f)\cap f^{k^\prime}(A)\subset\omega(f)\cap f^{k^\prime+m^\prime}(A).$
	This is because a continuum in $G$ can have at most $2r$ boundary points, where $r$ is the number of edges in $G,$ and in each step points from $Y_{i+1}^\prime\setminus Y_i^\prime$ yield some periodic point from the boundary of our continuum which is also in the boundary of its further iterates.
	Also, by the way $\epsilon_i$ are picked, each point from $Y_{i+1}^\prime\setminus Y_i^\prime$ belongs to a basic set different from the basic sets corresponding to the points of $Y^\prime_i.$ 
	This completes the induction, obtaining at the end integers $k\geq 0,m\geq 1$ such that $\omega(f)\cap f^k(A)\subset\omega(f)\cap f^{k+m}(A)$. 

Now, let us prove the opposite inclusion, i.e. $\omega(f)\cap f^{k+m}(A)\subset\omega(f)\cap f^k(A).$
	Suppose on the contrary, i.e. that there is some $y\in \omega(f)\cap f^{k+m}(A)\setminus\left(\omega(f)\cap f^k(A)\right)$.
	Let $M_y$ be some maximal $\omega$-limit set containing $y.$
If $M_y$ is a circumferential set or a periodic orbit, then let $C$ be 
	 a minimal cycle of graphs (possibly degenerate) containing $M_y$
	 and if $M_y$ is a basic set, let $C$ be an orbit of a subgraph described in Lemma~\ref{lem:basicset-sub}.
	%minimal cycle of graphs (possibly degenerate) containing $M_y$. 
	Note that, since $(f^m)^{-1}(y)\cap f^k(A)\neq\emptyset$, Lemma~\ref{lem:limitset} ensures $M_y\cap f^k(A)\neq \emptyset.$
	If $M_y$ is some maximal solenoidal $\omega$-limit set then %by Lemma~\ref{lem:limitset}, 
	since $f^k(A)\cap M_y\neq\emptyset$, we can pick some cycle $C,$ containing $M_y$ and provided by Lemma~\ref{lem:solenoid-components} for $f^k(A)$.
		Then $C$ is provided for $M_y$ in any of possible cases.
	
	Let us enumerate the elements of the above orbit $C$, say $C_i,\ i=1,...,r$.		
	Suppose $y\in C_t.$ 
	We claim that $f^k(A)\cap C_t\cap M_y=\emptyset.$
%The construction performed before ensures that each $C_i$ which intersect $f^k(A)$ satisfies {\color{cyan}$C_i\subset f^k(A).$} %or \underline{$f^{k+m}(A)\cap C_i\subset f^k(A).$}
	Indeed, if $M_y$ is maximal periodic orbit then it follows trivially.
	If $M_y$ is a circumferential set then, since {by assumptions} there is $z$ with $M_z\neq M_y$, we may apply Lemma~\ref{lem:circumferential} and therefore $C_t\cap f^k(A)\neq \emptyset$ would imply $y\in C_t\subset f^k(A)$ which is a contradiction with the choice of $y$.
If $M_y$ is solenoid then since $C$ was provided by Lemma~\ref{lem:solenoid-components}, we see that $f^k(A)\cap C_t\cap M_y=\emptyset$ as otherwise $y\in C_t\subset f^k(A).$
	%If $M_y$ is a basic set then, like before, by the choice of $\epsilon_0$ we know that the case when $r=1$ and $C_1\cap M_y$  only intersects $f^k(A)$ is impossible (remember that $f^k(A)$ contains  components of minimal cycles containing basic sets whenever it intersects their interior).
	Repeating previous arguments, we see that if  $f^k(A)\cap C_t\cap M_y\neq \emptyset$ and $M_y$ is a basic set, then we have a contradiction with the choice of $\epsilon_0$.
	Indeed, $f^k(A)\cap C_t\cap M_y=\emptyset$. 

	 Suppose (after renumbering components $C_i$) that $f^k(A)$ intersects $C_1\cap M_y,...,C_s\cap M_y$ for some $s$, 
$f^k(A)\cap\left(\bigcup_{i=s+1}^r C_i\right)\cap M_y=\emptyset$ and $t>s.$ 
	 Note that, since $\omega(f)\cap f^k(A)\subset\omega(f)\cap f^{k+m}(A)$, $f^{k+m}(A)$ intersects $C_1\cap M_y,...,C_s\cap M_y$.
	Clearly also $f^{k+m}(A)$ intersects $C_t$.
	But then for all $n\geq 1$, $f^{k+nm}(A)$ intersects $C_i\cap M_y$ for at least $s+1$ components $C_i$ of $C$, and this property extends onto $f^k(A)$ since it is a recurrent set.  
%and hence it intersects closed set $\bigcup_{i=s+1}^r C_i\cap M_y$ while $f^k(A)$ does not, which is impossible since $f^k(A)$ is recurrent for $f^m$. 
This contradicts the choice of $s$.
Indeed, 
\begin{equation}
\label{kmeqk} \omega(f)\cap f^k(A)=\omega(f)\cap f^{k+m}(A).
\end{equation}
	
	Now we prove $\omega=\bigcup_{x\in f^k(A)}\omega_{f^{m}}(x)\subset f^k(A).$
	Take some $y\in\omega_{f^m}(z),$ $z\in f^k(A).$
	By Lemma~\ref{lem:limitset}, we have that $\omega_{f^m}(z)\cap f^k(A)\neq\emptyset.$ If $\omega_{f^m}(z)$ is a periodic orbit then there is nothing to prove {by \eqref{kmeqk}}.
	Similarly, if %$\omega_{f^m}(z)\cap f^k(A)$ 
	{$\omega_{f^m}(z)$} is a minimal solenoid or circumferential set then $\omega_{f^m}(z)\subset f^k(A)$. But then, Lemma~\ref{lem:solenoid-components}
	ensures that  $\omega_{f^m}(z)\subset f^k(A)$ also in the case that $\omega_{f^m}(z)$ is a solenoid with non-minimal points. The case of basic set intersecting interior of $f^k(A)$ is covered by Lemma~\ref{lem:basicset-sub}.
The only remaining possibility is that $\omega_{f^m}(z)\subset B$ where $B$ is a basic set such that $B\cap \Int f^k(A)=\emptyset$.
	 But Corollary~\ref{cor:basicset-sub} implies that $\omega_f(z)$ is a periodic orbit, and this case is already covered.
	 Indeed, $\omega\subset f^k(A).$
	
It is well known that  $\omega(f)$ is closed (e.g. see~~\cite{B1}). 
	We claim that $\omega$ is also a closed set. Fix any $z\in \omega(f)\cap f^k(A)\setminus \omega$. Note that $z\not\in \Int f^k(A)$ because then there is $y\in f^k(A)$ such that $z\in \omega_{f^m}(y)\subset \omega$.
	But if $z\in \partial f^k(A)$ then we have two possibilities. If $\dist (\omega,z)>0$ then $z\not \in \overline{\omega}$, In the other case, there exist points $y_n\in \Int f^k(A)$
	such that $\lim_{n\to \infty}\dist (z,\omega_{f^m}(y_n))=0$. Since the space of all $\omega$-limit sets is closed (see\cite{MaiClosed}), we have $z\in \omega_{f^m}(y)=\lim \omega_{f^m}(y_n)\subset f^k(A)$ for some $y$. Since $\omega_{f^m}(y)\subset f^k(A)$, we have two possibilities. Either $\omega_{f^m}(y)\cap \Int f^k(A)\neq \emptyset$, which implies $z\in \omega_{f^m}(y)\subset \omega$ or $\omega_{f^m}(y)\subset \partial f^k(A)$, so we have that $\omega_{f^m}(y)$ is a periodic orbit, so $z$ is periodic point and hence $z\in \omega_{f^m}(z)\subset \omega$ as well. Indeed $\omega$ is a closed set.
	
	Since $m$ fixed the subgraphs with orbits containing basic sets and only intersecting $f^k(A),$ from the last observation we conclude that such intersections are unions of periodic orbits. Hence, the last statement of the Lemma is proved.
	\end{proof}
	
\section{Characterization of $\Rec(\tilde{f})$}\label{sec:characterization}	

In this section, using the results from Section~\ref{sec:recurrent-vs-limitsets}, we finally provide full characterization of set $\Rec(\tilde{f})$.
	 First we deal with nondegenerate recurrent subcontinua whose elements do not all converge to the same circumferential set.
	 Later we deal with the rest of them and close by proving the main theorem of the section, Theorem~\ref{thm:rec-cont}.

\begin{lemma}\label{lem:omegasingleton}
	Let $A$ be a recurrent subcontinuum containing a fixed point $x_0$ such that for each $x\in A,$ sequence $(f^n(x))_{n\geq 1}$ converges to $x_0.$ Then $A$ is periodic point of $\tilde{f}$.
\end{lemma}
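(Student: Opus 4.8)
The plan is to show that the hypotheses force $A=\{x_0\}$, which is a fixed point of $\tilde f$ and hence periodic; the whole difficulty is therefore to rule out a nondegenerate $A$. The key preliminary observation is that the convergence assumption pins down the only periodic point that $A$ can contain: if $p\in A$ is periodic, then its finite orbit lies in $A$ and the sequence $\left(f^n(p)\right)_n$ converges to $x_0$; a convergent sequence taking finitely many values is eventually constant, so the whole (periodic) orbit of $p$ equals $\{x_0\}$ and $p=x_0$ (cf. Remark~\ref{rem:evperiodic}). Thus it suffices, assuming $A$ nondegenerate, to exhibit a periodic point of $f$ lying in $A$ and different from $x_0$, and so reach a contradiction.

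To manufacture such a point I would look for a nondegenerate free arc $J\subseteq A$ avoiding $x_0$ together with some $t\ge 1$ such that $f^t(J)\supseteq J$. Any such self-covering arc carries a fixed point of $f^t$ by the intermediate value property on an arc, and that fixed point is a periodic point of $f$ contained in $J\subseteq A\setminus\{x_0\}$, which is exactly the forbidden configuration. To produce $J$ I would first locate genuine \emph{stretching}: choose $a^*\in A$ maximising $d(\cdot,x_0)$, put $\rho=d(a^*,x_0)>0$, and use recurrence to fix an increasing $n_k$ with $f^{n_k}(A)\to A$. Then there are $x_k\in A$ with $f^{n_k}(x_k)\to a^*$, and after passing to a subsequence $x_k\to\xi\in A$, while $f^{n_k}(\xi)\to x_0$ by hypothesis. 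Consequently $f^{n_k}$ maps the arcs in $A$ joining $\xi$ to $x_k$ (whose diameters tend to $0$) onto continua stretching from near $x_0$ out to near $a^*$. This is honest expansion, of the sort excluded by the rotational behaviour that the convergence hypothesis forbids, and it feeds an arc-chasing argument in the spirit of Lemmas~\ref{lem:connectingcontinua} and~\ref{lem:growing-arc}: because $A$ has only finitely many free arcs, iterating the relation ``a small arc is mapped over a macroscopic far arc'' and applying the pigeonhole principle yields a free arc $J\subseteq A$ lying in the region $\{d(\cdot,x_0)>\rho/2\}$, hence missing $x_0$, with $f^t(J)\supseteq J$; here recurrence of $A$ under $f^t$ is invoked, as in Lemma~\ref{lem:connectingcontinua}, to keep the self-covering arc and the relevant preimages inside $A$. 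Combining this with the preliminary observation gives the contradiction and forces $A=\{x_0\}$.

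The main obstacle is precisely the last step: turning the stretching into a self-covering arc that is genuinely contained in $A$ and disjoint from $x_0$. The delicate points are controlling the branch points crossed by the stretched images, ensuring that the chasing can be confined to free arcs in the far region $\{d(\cdot,x_0)>\rho/2\}$ (the images $f^{n_k}(A)$ necessarily run all the way back to $x_0$, since $x_0\in f^{n_k}(A)$ for every $k$), and certifying via recurrence that the cycle of arcs obtained from the finite family actually closes up inside $A$ rather than only near it. The role of the hypothesis must be kept explicit throughout, since an irrational rotation on a circle shows that recurrence of a nondegenerate continuum alone never yields a self-covering arc; it is the attraction of every point of $A$ to the fixed point $x_0$ that produces the expansion making the chase succeed.
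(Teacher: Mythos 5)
Your opening reduction is where the proof breaks: the hypotheses do \emph{not} force $A=\{x_0\}$, so the contradiction you are chasing need not exist. Take $G=\mathbb{S}^1=\mathbb{R}/2\pi\mathbb{Z}$ and $f(\theta)=\theta+\epsilon(1-\cos\theta)$ for small $\epsilon>0$. This is a circle homeomorphism whose unique fixed (indeed unique periodic) point is $x_0=0$, and since $f(\theta)\geq\theta$ with equality only at $0$, every orbit converges to $x_0$. The continuum $A=G$ satisfies $\tilde{f}(A)=A$, so it is recurrent, nondegenerate, contains $x_0$, and satisfies the convergence hypothesis -- yet $A\neq\{x_0\}$ and $A$ contains no periodic point other than $x_0$. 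This shows both that your target conclusion is false and that the mechanism you propose cannot fire: in this example your ``stretching'' is genuinely present (the small arcs from $\xi=x_0$ to $x_k=f^{-n_k}(a^*)$ are blown up onto half the circle), but every stretched image emanates from $x_0$ itself, and no free arc $J\subseteq A$ avoiding $x_0$ with $f^t(J)\supseteq J$ can ever be produced, precisely because such a $J$ would yield a second periodic point, which does not exist here. So the convergence hypothesis does not ``forbid'' the expansion you exhibit, and the pigeonhole closing-up step has no cycle to find.

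The lemma's conclusion is genuinely weaker than $A=\{x_0\}$: it only asserts $f^p(A)=A$ for some $p$, and the paper's proof is built to accommodate nondegenerate $A$ such as the one above. Its argument fixes a good iterate and scale via Lemma~\ref{lem:similarimages}, classifies the sides $T_i$ of $x_0$ according to whether one-sided neighborhoods $K_i\subseteq A$ have all forward images staying in $A$, and shows (by exactly the kind of arc-covering-plus-recurrence contradiction you envisage, but applied to the one-sided neighborhoods \emph{at} $x_0$ rather than to arcs far from it) that any side whose neighborhoods eventually leave $A$ cannot meet $A$ at all. Attraction to $x_0$ then funnels all of $A$ into the union of the ``good'' sided neighborhoods after finitely many steps, giving $f^{nn_0}(A)\subseteq A$ and hence equality by recurrence. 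If you want to salvage your approach, the correct dichotomy is not ``$A=\{x_0\}$ or a second periodic point exists,'' but rather ``some iterate of $A$ maps into $A$ or a one-sided neighborhood of $x_0$ that leaves $A$ must return over itself,'' and only the second alternative produces the forbidden periodic point.
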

\begin{proof}
If $A=\{x_0\}$ then the result is obvious, so suppose that $A$ is not degenerate.
	First we claim that there exist $m\geq 0$ and $\epsilon>0$ such that for each $k\geq m,$ from $\dist_H(f^m(A),f^k(A))<\epsilon$ follows that $f^k(A)$ contains one-sided neighborhoods on the same sides of $x_0$ as $f^m(A).$
%	First suppose $x_0\in \Int f^t(A)$ for some $t$. 
If $x_0$ is a branching point then the claim follows from Lemma~\ref{lem:similarimages},  so let us assume that $x_0\in \Int E$ for some edge $E$ of $G$. 	%If $x_0\in \Int E$ for some edge $E$ of $G$ then we 
First suppose $x_0\in \Int f^t(A)$ for some $t$. We have two possibilities. if $E\subset f^k(A)$ for every $k\geq t$ then we 
	take %sufficiently small 
	any $\epsilon$ 
	 and put $m=t$. If there is $m\geq t$ such that $E\setminus f^m(A)\neq\emptyset$ and $x_0\in \Int  f^m(A)\neq\emptyset$
		then again we are done by taking sufficiently small $\epsilon$. In the other case we take $m$ such that $x_0\in \partial  f^m(A)\neq\emptyset$ and sufficiently small $\epsilon$
		so that ``hole'' $E\setminus f^m(A)$ is not covered by continuum close to $f^m(A)$.

It is easy to see that by recurrence of $A$, if $f^m(A)$ is periodic then so is $A$, therefore without loss of generality we may assume that $m=0$.

Note that we can always select side $T$ of $x_0$ in such a way that either all of its elements (one-sided neighborhoods) are completely contained in $A$ or do not intersect it. 
For each side $T_i$ of $x_0$ as above, let $\zeta_i$ be the supremum over lengths of one-sided neighborhoods in $T_i$ and let
$\zeta$ be the minimum over all $\zeta_i.$

	Consider a side $T_1$ of $x_0$ such %that for every sufficiently small one 
	every $T_1$-sided neighborhood $L$ of $x_0$ satisfies $L\subset A$.
	If there exist a $T_1$-sided neighborhood $J_1\subset A$ of $x_0$ and an iterate $m_1$ such that all of its points never leave $A$ under $f^{m_1},$ i.e. $\bigcup_{k\geq 0} f^{km_1}(J_1)\subset A,$ then denote its length by $l_{J_1}.$ If $J_1$ does not exist, put $l_{J_1}=\infty$.
	Apply the above procedure over all sides $T_i$ of $x_0$ for which one-sided neighborhood is a subset of $A$.
	If there are such sides {with $l_{J_k}<\infty$}, let $\kappa_1$ be the minimum of all lengths $l_{J_k}$ and let $m$ be the product over all $m_k,$ otherwise set $\kappa_1=\zeta$ and $m=1$. 
	Note that if $l_{J_k}<\infty$ then for each $J\subset J_k$ and each $r\geq 0$ we have $f^{rm}(J)=f^{r'm_k}(J)\subset A$.
	Denote the $T_i$-sided neighborhood of $x_0$ of length $\kappa_1$ by $K_i$.
	We may renumerate sides of $x_0$ and find $r_1\geq 0$ such that each $1\leq i\leq r_1$ satisfies $f^{ms}(K_i)\subset A$ for all $s\geq 1$.
	Let $\kappa_2<\frac{1}{2}$ be half the distance from $x_0$ to the closest branching point $e$, $x_0\neq e$ and put $\kappa_2=\infty$ if $G$ is interval.
	Denote $\kappa=\min\{\kappa_1,\kappa_2,\zeta,\epsilon\}.$
	Since $A$ is recurrent under $f^m$ there is $n,\ m\vert n,$ such that $\dist_H(f^n(A),A)<\kappa.$	
	
		Since $f^n(x_0)=x_0,$ by continuity of $f^n,$ there exists some $\delta<\kappa$ such that all the points in $\delta$-neighborhood of $x_0$ are mapped by $f^n$ inside a $\kappa$-neighbourhood of $x_0.$ Denote by $L_k$ the $T_k$-sided neighborhood $L_k\subset K_k$ of length $\delta$.
For each $k$ either $f^{ns}(L_k)\subset B(x_0,\kappa)$		
%	{\color{red} $f^{ns}(L_k)\subset B(x_0,\kappa)$?
%	Also, it is not stated what are $L_k$. I suppose those are $L_k\subset K_k$ from the two paragraphs before, but with length $\delta$?} 
for each $s\geq 1$ or there exists a minimal number $r>0$ such that $f^{(r+1)n}(L_k)\setminus B(x_0,\kappa)\neq \emptyset$.
But in that case there is $j$ such that $f^{rn}(L_k)\supset L_j$. 
By the choice of $\epsilon$
we also have that if $L_k\subset A$ then also $L_j\subset A$.
Assume that $f^{sn}(L_k)$ eventually leaves $A$ for some $s>0$.
	Then without loss of generality (increasing $r$ when necessary) we
may assume that there exists at least one such $j>r_1$ as otherwise 
\[
f^{(r+1)n}(L_k)\subset  (B(x_0,\delta)\cap A)\cup \bigcup_{i=1}^{r_1}f^n(K_i)%\subset B(x_0,\kappa)\cup A
\]
and consequently, for each $s>0$ we have %{\color{red}$ \ast$ (look under the Figure \ref{fig:impossible_cases})}
\[
f^{(r+s)n}(L_k)\subset  (B(x_0,\delta)\cap A)\cup \bigcup_{t=1}^s\bigcup_{i=1}^{r_1}f^{tn}(K_i) \subset A.
\]
%which contradicts the choice of $r$. 
In other words, each $L_k$ which eventually leaves $A$ covers at least one $L_j$ with the same property. 
Since the number of sides of $x_0$ is finite, it will imply that there exist $k^\prime$ and $r^\prime\geq 1$ such that $f^{r^\prime n}(L_{k^\prime})\supset L_{k^\prime}.$
Increasing $r^\prime$ when necessary, we obtain a sequence $\{y_i\}_{i\leq 0}\subset A$ such that $f^{r^\prime n}(y_i)=y_{i+1}$ and $\dist(f^{r^\prime n}(y_0),A)>0$.
This would contradict recurrence of $A$ under $f^{r^\prime n}$. We have just proved that if $K_i\cap A\neq \emptyset$ then $i\leq r_1$.

Observe that there is some $n_0$ such that any point from $A$ enters some $\overline{K_k}$ after at most $n_0$ iterates of $f^n.$
Fix any $x\in A$. We can find (sufficiently large) $m$ such that $\dist_H(f^{nm}(A),A)<\kappa,\ \dist(x_0,f^{nm}(x))<\kappa$. 
By continuity $\dist(x_0,f^{nm}(\overline{U_x}))<\kappa$ for some sufficiently small neighborhood $U_x\ni x$.
Then $f^{nm}(\overline{U_x})\subset \cup_{i=1}^{r_1}K_i$. Selecting finite subcover from $U_x$ we obtain $n_0$.
We obtain that $f^ {nn_0}(A)\subset A.$ Due to recurrence, we must have $A=f^ {nn_0}(A)$ which proves that $A$ is periodic.
\end{proof}

	We proceed to show that each recurrent subcontinuum $A,$ which is not a singleton and such that $\omega_f(x)$ is not the same circumferential set for all $x\in A,$ is in fact periodic.

In what follows, we will use a standard method of collapsing of invariant sets, which we briefly present below.
	Let $(G,f)$ be a dynamical system on a graph and $M\subset G$ some non-empty, invariant and closed subset of $G.$
	We can define a factor system $(G/_M,f/_M),$ where $G/_M$ is the factor space and $f/_M\colon G/_M\to G/_M $ is given by $f/_M\circ\pi_M=\pi_M\circ f,$ where $\pi_M\colon G\to G/_M$ is the canonical projection.
	Note that if $M\subset A$ then $A=\pi_M^{-1}(\pi_M(A))$.
	The following lemma follows straightforwardly.
\begin{lemma}\label{lem:projection}
	Let $G$ be a graph, $A\subset G$ its subcontinuum and $M\subset A$ some non-empty, closed and invariant subset such that $M\subset f^n(A),$ for all $n\in\mathbb{N}.$ Then:
\begin{enumerate}[(1)]
\item If $A$ is recurrent in $(G,f),$ then $A/_M$ is recurrent in $(G/_M,f/_M).$
\item If $\omega=\bigcup_{x\in A}(\omega_f(x))$ then $\bigcup_{x\in A/_M}(\omega_{f/_M}(x))=\omega/_M.$
\item If $A$ is recurrent and $f^n(A)/_M$ is periodic in  $(G/_M,f/_M),$ then $A$ is periodic in $(G,f).$
\end{enumerate}
\end{lemma}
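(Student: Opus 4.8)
The plan is to treat $\pi_M$ as a factor map and transport everything to the hyperspace level, the whole argument resting on one elementary fact: $\pi_M$ is injective on sets that contain $M$. First I would record the standard preliminaries. Since $M$ is closed and $f$-invariant, the quotient map $f/_M$ is well defined and continuous, so $\pi_M\colon (G,f)\to (G/_M,f/_M)$ is a semi-conjugacy of compact metric systems; consequently $\pi_M$ induces a continuous hyperspace map $\Pi\colon C(G)\to C(G/_M)$, $\Pi(C)=\pi_M(C)$, which intertwines the induced maps, $\Pi\circ\tilde f=\widetilde{f/_M}\circ\Pi$, and hence $\Pi\circ\tilde f^{\,n}=\widetilde{f/_M}^{\,n}\circ\Pi$ for all $n$. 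The second observation is that for any set $B$ with $M\subseteq B$ one has $\pi_M^{-1}(\pi_M(B))=B$, because the fiber over the collapsed point is exactly $M\subseteq B$; in particular $\pi_M$, and therefore $\Pi$, is injective on the family of continua containing $M$.

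For part (1) I would argue purely by continuity. Recurrence of $A$ means $\tilde f^{\,n_k}(A)\to A$ in the Hausdorff metric for some $n_k\to\infty$. Applying $\Pi$ and the intertwining relation gives $\widetilde{f/_M}^{\,n_k}(A/_M)=\Pi(\tilde f^{\,n_k}(A))\to\Pi(A)=A/_M$, so $A/_M$ is recurrent in $(G/_M,f/_M)$; no use of the invariance hypothesis is needed here. For part (2) I would invoke the standard fact that a factor map carries $\omega$-limit sets onto $\omega$-limit sets, $\omega_{f/_M}(\pi_M(x))=\pi_M(\omega_f(x))$ for every $x$. The inclusion ``$\subseteq$'' uses compactness of $G$ to pull back a convergent subsequence of iterates, while ``$\supseteq$'' is immediate from continuity. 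Since $\pi_M(A)=A/_M$, unioning over $x\in A$ yields $\bigcup_{\bar x\in A/_M}\omega_{f/_M}(\bar x)=\bigcup_{x\in A}\pi_M(\omega_f(x))=\pi_M(\omega)=\omega/_M$; the only point to check is that the collapsed point causes no discrepancy, which holds because $\omega_f(z)\subseteq M$ for every $z\in M$.

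The part carrying the actual content is (3), and this is where the injectivity observation is essential. By hypothesis $\Pi(f^n(A))=f^n(A)/_M$ is periodic under $\widetilde{f/_M}$, say with period $p$, so the intertwining relation gives $\Pi(f^{n+p}(A))=\widetilde{f/_M}^{\,p}(\Pi(f^n(A)))=\Pi(f^n(A))$. Both $f^{n+p}(A)$ and $f^n(A)$ contain $M$ by the standing hypothesis $M\subseteq f^k(A)$ for all $k$, so applying $\pi_M^{-1}$ (allowed by the injectivity above) forces $f^{n+p}(A)=f^n(A)$. Thus $A$ is eventually periodic for $\tilde f$, and, being recurrent by assumption, it is periodic by Remark~\ref{rem:evperiodic} applied in $C(G)$.

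The hard part, such as it is, is not any single estimate but making sure the invariance assumption $M\subseteq f^k(A)$ is used in exactly the right place: it is precisely this condition that licenses the cancellation $\pi_M^{-1}\circ\pi_M=\mathrm{id}$ on the relevant continua, so that the collapse of $M$ loses no information about the iterates $f^k(A)$. Without it the quotient could identify distinct iterates and part (3) would fail; everything else is bookkeeping built on continuity of $\Pi$ and the factor-map behaviour of $\omega$-limit sets.
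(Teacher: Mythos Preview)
Your proof is correct and is precisely the straightforward argument the paper has in mind; indeed the paper offers no proof at all, merely remarking that the lemma ``follows straightforwardly.'' Your identification of the key mechanism---that $\pi_M^{-1}(\pi_M(B))=B$ whenever $M\subseteq B$, which is exactly what the hypothesis $M\subset f^n(A)$ provides for every iterate---is the only substantive point, and you apply it cleanly in part~(3) together with Remark~\ref{rem:evperiodic}.
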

\begin{remark}	
	It is not hard to see that if $M$ in Lemma~\ref{lem:projection} has a finite number of components then $G/M$ is topological graph.
\end{remark}

\begin{lemma}%\label{prop:periodic}
	\label{lem:periodic}
	Let $A\in \Rec(\tilde{f})$ be a nondegenerate continuum. For each $x\in A$ let $M_x\supset \omega_f(x)$ be a maximal $\omega$-limit set. If $M_x$ is not circumferential set for some $x$
	or there are $x,y\in A$ such that $M_x\neq M_y$ then $A$ is a periodic point of $\tilde f$.
\end{lemma}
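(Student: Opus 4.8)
The plan is to collapse a suitable finite--component invariant set so as to reduce the statement to the already--established case of Lemma~\ref{lem:omegasingleton}, in which every point of the continuum converges to a single fixed point. Since the hypotheses here are exactly those of Lemma~\ref{lem:containscycles}, I would first invoke it to obtain integers $k,m\geq 0$ and pass to $B:=f^k(A)$, which again lies in $\Rec(\tilde f)$ (and, being an iterate of a recurrent continuum, is recurrent under $f^m$) and satisfies $\omega_{f^m}(x)\subset B$ for every $x\in B$, with $\omega:=\bigcup_{x\in B}\omega_{f^m}(x)$ closed and contained in $B$. It then suffices to prove that $B$ is a periodic point of $\tilde f$: for then $A$ is recurrent and eventually periodic, hence periodic by Remark~\ref{rem:evperiodic}.

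Next I would assemble the collapsing set $N$. For each $x\in B$ the set $\omega_{f^m}(x)$ is a periodic orbit, a basic set, a circumferential set, or a solenoid, and in every case Lemmas~\ref{lem:basicset-sub}, \ref{lem:solenoid-components} and~\ref{lem:circumferential} (as summarized in the final assertion of Lemma~\ref{lem:containscycles}) place it inside a cycle of graphs all of whose components meeting $B$ are contained in $B$. Taking $m$ to be a common multiple of the relevant periods, I may assume each such component is fixed by $f^m$, so $\omega_{f^m}(x)$ sits inside a single $f^m$-fixed subcontinuum $C_x\subset B$; I then set $N=\bigcup_{x\in B}C_x$. The decisive point, and the step I expect to be the main obstacle, is that only finitely many distinct continua $C_x$ occur: a graph map has finitely many basic sets and finitely many infinite circumferential and solenoidal maximal $\omega$-limit sets, while the bound of at most $2r$ boundary points exploited in Lemma~\ref{lem:containscycles} controls the periodic orbits that can appear. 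Granting this, $N$ is a nonempty closed set with finitely many components satisfying $f^m(N)=N$, $N\subset B$ and $\omega\subset N$; in particular $N=f^{mn}(N)\subset f^{mn}(B)$ for all $n$, and $G/_N$ is a topological graph.

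Finally I would collapse $N$ to a single point. Write $h=(f^m)/_N$, let $\pi=\pi_N$ be the canonical projection and $x_0=\pi(N)$; since $f^m(N)=N$, the point $x_0$ is fixed by $h$. By Lemma~\ref{lem:projection}(1) the continuum $\pi(B)$ is recurrent for $\tilde h$, and for every $x\in B$ the trajectory $\left(h^n(\pi(x))\right)_n=\left(\pi(f^{mn}(x))\right)_n$ has all its limit points in $\pi(\omega_{f^m}(x))\subset\pi(N)=\{x_0\}$, whence $h^n(\pi(x))\to x_0$. Thus every point of $\pi(B)$ converges to the fixed point $x_0$, and Lemma~\ref{lem:omegasingleton} shows that $\pi(B)$ is a periodic point of $\tilde h$ (the degenerate case $\pi(B)=\{x_0\}$ being immediate). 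Applying Lemma~\ref{lem:projection}(3) to the system $(G,f^m)$, the continuum $B$, and the invariant set $N$ (legitimate since $N\subset f^{mn}(B)$ for all $n$), I conclude that $B$ is periodic under $f^m$, hence under $f$; by the reduction of the first paragraph, $A$ is a periodic point of $\tilde f$, as desired.
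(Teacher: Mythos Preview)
Your overall strategy—passing through Lemma~\ref{lem:containscycles} and then collapsing an invariant set so as to land in Lemma~\ref{lem:omegasingleton}—is precisely the paper's strategy. The genuine gap is in the construction of a single collapsing set $N=\bigcup_{x\in B}C_x$ with finitely many components, fixed by one power $f^m$.

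You yourself flag the finiteness of $\{C_x\}$ as ``the main obstacle'', and the justification you offer does not go through. A graph map need not have only finitely many basic sets (e.g.\ an interval map with countably many pairwise disjoint invariant subintervals, each carrying a mixing restriction), and similarly there is no finiteness principle for solenoidal maximal $\omega$-limit sets. The $2r$ boundary-point bound used in Lemma~\ref{lem:containscycles} controls $|\partial f^k(A)|$, not the periodic orbits lying in $\Int f^k(A)$, so it says nothing about how many periodic $C_x$ can arise. Even when $\bigcup_x C_x$ happens to have finitely many components, nothing bounds the periods of the $C_x$ under $\tilde f^m$, so the ``common multiple of the relevant periods'' you need may fail to exist.

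The paper avoids this entirely by never attempting to cover $\omega$ in one stroke. It proceeds edge by edge: for a single edge $E$ only the two extreme points $x_1,x_2$ of $\omega\cap E$ are needed, Lemma~\ref{lem:containscycles} supplies periodic continua $C_1\ni x_1$ and $C_2\ni x_2$ in $A$ with specific periods $p_1,p_2$, and then—this is the missing ingredient—Lemma~\ref{lem:connectingcontinua} is invoked to merge them into a \emph{single} periodic continuum $M\subset A$ containing $C_1\cup C_2$. Collapsing $M$ reduces by at least one the number of non-loop edges meeting $\omega$ in two or more points; iterating this finitely many times (with a separate argument for loops) leaves $\omega$ finite in every edge, at which point one last collapse of $\omega$ itself puts you in the situation of Lemma~\ref{lem:omegasingleton}. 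In short, replace the global finiteness claim by an inductive edge-by-edge reduction driven by Lemma~\ref{lem:connectingcontinua}.
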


\begin{proof}
The idea of the proof is to apply  Lemma~\ref{lem:projection} a finite number of times, 
obtaining a sequence of integers $r_i\geq 0$, where $i=0,\ldots,m$ and factor maps $\pi_i \colon (G_i, f_i^{r_i})\to (G_{i+1},f_{i+1})$
and continua $B_i\subset G_i$ such that:
\begin{enumerate}
	\item $B_0=A$, each $B_i$ is recurrent under $f_i$ and $B_m$ is a periodic continuum,
	\item $\pi_i(f_i^{r_i}(B_i))=B_{i+1}$,
	\item 
	$\pi_i^{-1}(B_{i+1})=f_i^{r_i}(B_i)$.
\end{enumerate}

Note that if $B_{i+1}$ is periodic then so is $B_i$, because by definition $f_i^s(f_i^{r_i}(B_i))\subset f_i^{r_i}(B_i)$ 
for some $s>0$ (which is obtained by periodicity of $B_{i+1}$)
which implies that $f^{r_i+s}(B_i)=f^{r_i}(B_i)$ since $f^{r_i}(B_i)$ is recurrent under $f_i^s$. But then $B_i$ is periodic, since it is recurrent as well. %This is explicitly contained in Lemma~\ref{lem:projection} which is a tool for construction of maps $\pi_i$.
Indeed, this implies that by the above construction $A$ is periodic, by recursive application of the above argument.
Let us present how the above mentioned maps $\pi_i$ are constructed.	

Let $m,k$ be provided by Lemma~\ref{lem:containscycles} for $f$ and $A$, in particular 
	\[\omega=\bigcup_{x\in f^k(A)}\omega_{f^m}(x)\subset f^k(A)\] 
	and $\omega$ is a closed set.
  Note that $f^k(A)$ is recurrent also for $f^m$, and if $M_x\neq M_y$ then analogous sets calculated for $f^m$ also are not equal. Similarly, if $\omega_f(x)$ is a circumferential set % maximal $\omega$-limit set $E(M,f)$ containing $\omega_f(x)$ is circumferential, {\color{green} or just "if $\omega_f(x)$ is circumferential" since it is always minimal}
   then so is $\omega_{f^m}(x)$.
Therefore, without loss of generality, we may assume that $m=1$. 
By Remark~\ref{rem:evperiodic}, if we show that $f ^k(A)$ is periodic, then so is $A$, therefore without loss of generality we can assume that $k=0.$
Under the above simplifying assumptions we have $\omega=\bigcup_{x\in A}(\omega_f(x))\subset A$, which will simplify the notation.

	Consider some edge $E$  %which is not a loop 
	and such that $\omega\cap E$ has at least two elements, and fix an ordering of $E$ induced by identification with $[0,1]$. 
%	By Lemma~\ref{lem:containscycles}, $\omega$ is closed.		
	Since $\omega$ is closed, there are $x_1,x_2$, the minimal and maximal points of $E\cap\omega$ in ordering of $E$, respectively.
	Recall that $\omega$ was obtained by an application of Lemma~\ref{lem:containscycles}, hence there are some, possibly degenerate, $\tilde{f}$-periodic subgraphs $C_1\ni x_1$ and $C_2\ni x_2,$ $C_1,C_2\subset A,$ with periods $p_1,p_2\geq 1$ respectively.
	If $C_1\cap C_2\neq\emptyset$ then put $M\coloneqq C_1\cup C_2\subset A$ and $k=p_1p_2$.

	Next suppose that the second case $C_1\cap C_2=\emptyset$ occurs. 
	By Lemma~\ref{lem:connectingcontinua} there is $k\geq 1$ (which is a multiply of  $p=p_1p_2$) and a continuum $J\subset A$ containing $C_1$ and $C_2$ such that
	$J\subset f^k(J)$.	
	Note that if we denote 
	\[
	M\coloneqq \overline{\bigcup_{n\geq 1}f^{kn}(J)}
	\]
	then $M=f^k(M)$
	and $M$ is a continuum.	Note that also $M\subset A$, because continua $f^{kn}(J)$ form a nested sequence and $\liminf_{n\to\infty} d_H(f^{nk}(A),A)=0$.
	
	In both cases, we obtain $k$ and a continuum $M\subset A$ such that  $f^k(M)=M$ and % $E\cap \omega$ is not a loop {\color{green} not sure if we need the "$E\cap \omega$ is not a loop" part} and 
	$(C_1\cup C_2)\cap E\subset M\cap E$.
	
%	Due to recurrence of $f^r(A)$ under $f^k,$ we have that $f^r(A)\supset M\coloneqq \overline{\bigcup_{n\geq 1}f^{kn}(J)}\cup C_1\cup C_2$.

Now, let us apply Lemma~\ref{lem:projection} and collapse $M$ to a point, obtaining a factor map $\pi (G,f^k)\to (G/_M,f^k/_M)$ and $L=\pi(E)$ and $\kappa=\pi(\omega)$. There are two possibilities. Either $L\cap \kappa$ is a singleton (in particular, it may happen that $L$ is singleton itself) or $L$ is a loop. Simply, either $M$ covers all of $E\cap \omega$ or it connects $x_1$ with $x_2$
not crossing some point in $E\cap \omega\setminus\{x_1,x_2\}$ producing that way disconnecting interval $E\setminus M$, see Figure~\ref{fig:collapsing}.

In any case, the number of edges in $G/M$ with at least two points intersecting $\kappa$ and which are not loops
	is decreased by at least one compared to $\omega$ and $G$. 
	
	Repeating this procedure a finite number of times, we may assume that $\pi(\omega\cap E)$ is a singleton
for each edge $E$ which is not a loop.
	Therefore, without loss of generality we may assume that we start with a graph with this property, i.e. we replace $G,A,\omega$ by $\pi(G), \pi(A), \pi(\omega)$. 

\begin{figure}[H]
\centering
     \begin{subfigure}[t]{0.45\textwidth}
         \centering
         \includegraphics[scale=0.9]{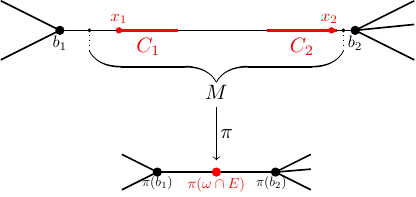}
         \caption{}
         \label{fig:first}
     \end{subfigure}
     \hfill
     \begin{subfigure}[t]{0.45\textwidth}
         \centering
         \includegraphics[scale=0.9]{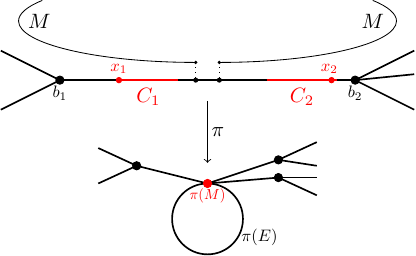}
         \caption{}
         \label{fig:second}
     \end{subfigure}
     
     \caption{ The image of a factor map is shown. $E$ is the edge with endpoints $b_1,b_2.$ All the points of $\omega\cap E$ lie between $x_1$ and $x_2$ in the order of $E$. Figure \ref{fig:first} illustrates the case when $M\subset E$ while Figure \ref{fig:second} illustrates the case when $[x_1,x_2]\setminus M\neq \emptyset$ so $\{b_1,b_2\}\subset M,$ hence $\pi(E)$ is a loop and $\pi(\omega)\cap\pi(E)$ is not necessarily finite.
}\label{fig:collapsing}
\end{figure}
%	Now we deal with the second case, where $\omega$ intersects in an infinite cardinality only loops. Fix a loop $L$ such that $L\cap \omega$ is infinite.
Now, let us deal with loops $L$ having infinite intersection with $\omega$. 
	Note that since $f(\omega)=\omega$, $L\cap f^r(A)$ is always a nondegenerate connected set {for each $r\geq 0$ (namely, }$L$ has only one branching point).
	Suppose first that there is {$r\geq 0$} such that $L\setminus f^r(A)\neq \emptyset$. Then we repeat previous construction for the arc $L\cap f^r(A)$ and the recurrent continuum $f^r(A)$.
	This way we find a connected set $\omega\cap L\subset M\subset f^r(A)$  invariant for $f^k$ for some $k>0$. By the definition $\omega\subset f^r(A)$ so we can collapse $M$ to a point by Lemma~\ref{lem:projection} and continue with continuum $f^r(A)/M$. This way we reduce number of loops intersecting $\omega$ at infinitely many points such that $L\setminus f^r(A)\neq \emptyset$ for some $r$. 
	Furthermore, if we calculate $\omega$ with help of $f^k$
	instead of $f$ resulting set $\omega$ will not increase, so we may assume $k=1$.
%	{\color{green} It seems, something is missing in the last sentence.
%	I think it would be fine if we just erased it.}
	Applying the above reductions a finite number of times, we may assume that if $L$ is a loop and $\omega\cap L$ is infinite, then $f^n(A)\supset L$ for each $n$.

Let us fix a loop $L$ such that $\omega\cap L$ is infinite and $f^n(A)\supset L$ for each $n$.
It follows that for any fixed integer $k$ and for all $n$ we have $f^k(L)\subset f^{n+k}(A)$. Due to recurrence of $A,$ we obtain that $f^k(L)\subset A.$ It follows that $M\coloneqq \overline{\bigcup_{n\geq 1}f^{n}(L)}\subset A$ and $M$ has a finite number of components, since $L\cap f^m(L)\neq \emptyset$ for some $m>0$, by the definition of $\omega$ in its infinite intersection with $L$. Clearly $f(M)\subset M$.
We then collapse $M$ to a point by application of Lemma~\ref{lem:projection}, reducing number of loops intersecting $\omega$
at infinitely many points. In finitely many iterates of this procedure, we are left with $\omega\subset A$ intersecting each edge in a finite number of points.
But $\omega$ is also invariant and closed, so we can collapse it to a point using $M=\omega$ in Lemma~\ref{lem:projection}, obtaining after projection continuum with
a singleton $\omega$, and so this continuum is periodic by Lemma~\ref{lem:omegasingleton}.
This is the last step in the construction of factor maps $\pi_i$, which satisfy all claimed properties. In particular $A$ is periodic, because all factor maps $\pi_i$ were obtained by Lemma~\ref{lem:projection}.
The proof is complete.
\end{proof}

\begin{lemma}\label{lem:recurrent:circ}
 Let $M=E(K,f)$ be circumferential set. Let $C$ be component of $K$  with period $r$ and $A\subset C$ nondegenerate continuum. 
Let $\phi\colon (C,f^r)\to (\mathbb{S}^1,R)$ be an almost conjugacy with an irrational rotation $R$. 
	Then $A$ is recurrent if and only if $A=C$ or \begin{enumerate}
%\item $A$ has exactly two endpoints $x,y\in M$,
\item\label{rec:first} $A$ has exactly two boundary points $x,y$ in the relative topology of $C$ and $x,y\in M$
\item\label{rec:second} 
$\phi^{-1}(\phi(x))\subset A$ and $\vert\phi^{-1}(\phi(y))\cap A\vert=1$ or vice-versa.
\end{enumerate}
\end{lemma}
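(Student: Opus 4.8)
The plan is to reduce everything to the selfmap $g:=f^r|_C$ of the component $C$ and its induced map $\tilde g=\widetilde{f^r}$ on $C(C)$. Since $C$ is a component of the cycle of graphs $K$ of period $r$, the sets $f^i(C)$, $0\le i<r$, are pairwise disjoint, so for $A\subset C$ any return $\dist_H(f^{n}(A),A)\to 0$ forces $r\mid n$ along the relevant subsequence; hence $A\in\Rec(\tilde f)$ iff $A\in\Rec(\tilde g)$, and it suffices to work inside $C$. I would then isolate the structural facts coming from the almost conjugacy $\phi$ (cf.\ \cite{Shao} and the definition of almost conjugacy): $C$ is a topological circle, $\phi$ is a monotone, orientation preserving, degree one map collapsing a countable family of pairwise disjoint arcs (the nondegenerate fibers) to points and injective elsewhere; each nondegenerate fiber $\phi^{-1}(\theta)=[\theta^-,\theta^+]$ has $\partial\phi^{-1}(\theta)=\{\theta^-,\theta^+\}\subset M$ while its interior misses $M$. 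Because $C$ has finite length, fiber lengths tend to $0$ near every base point, which yields the one-sided limits $\lim_{\theta_k\uparrow\theta}\phi^{-1}(\theta_k)=\{\theta^-\}$ and $\lim_{\theta_k\downarrow\theta}\phi^{-1}(\theta_k)=\{\theta^+\}$ in $\dist_H$. Finally, writing $R(\theta)=\theta+\rho$, I use that $0$ is a two-sided accumulation point of $\{n\rho\}$, so there exist $n_k\to\infty$ with $n_k\rho\to 0^+$ and with $n_k\rho\to 0^-$; along either of them $R^{n_k}\to\mathrm{id}$ uniformly.

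For sufficiency, the case $A=C$ is immediate since $g(C)=C$. Otherwise $A$ is a proper subarc $[x,y]$ with $\phi(A)=[\alpha,\beta]$, $\alpha=\phi(x)$, $\beta=\phi(y)$, and condition~\eqref{rec:first} gives $x,y\in M$. I translate condition~\eqref{rec:second}: the alternative ``$\phi^{-1}(\phi(x))\subset A$ and $|\phi^{-1}(\phi(y))\cap A|=1$'' says precisely that $x=\alpha^-$ and $y=\beta^-$ are the \emph{entering} endpoints of their fibers (a degenerate fiber counting as both endpoints), while the ``vice versa'' alternative says $x=\alpha^+$, $y=\beta^+$ are the \emph{exiting} endpoints. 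In the first situation I choose $n_k$ with $n_k\rho\to 0^-$, so that $\phi(g^{n_k}(x))=\alpha+n_k\rho\uparrow\alpha$ and $\phi(g^{n_k}(y))\uparrow\beta$, and the one-sided fiber limits give $g^{n_k}(x)\to\alpha^-=x$, $g^{n_k}(y)\to\beta^-=y$; in the second I take $n_k\rho\to 0^+$ and get $g^{n_k}(x)\to\alpha^+=x$, $g^{n_k}(y)\to\beta^+=y$. As $g$ is orientation preserving and $\phi(g^{n_k}(A))=R^{n_k}[\alpha,\beta]\to[\alpha,\beta]$ is a proper arc of base length $\beta-\alpha>0$, the image $g^{n_k}(A)$ is exactly the oriented arc from $g^{n_k}(x)$ to $g^{n_k}(y)$; an arc of a circle whose endpoints and projection converge converges in $\dist_H$, so $g^{n_k}(A)\to A$ and $A\in\Rec(\tilde g)$.

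For necessity, assume $A\in\Rec(\tilde g)$ is a proper subarc $[x,y]$ and pick $n_k$ with $g^{n_k}(A)\to A$. Projecting, $R^{n_k}[\alpha,\beta]\to[\alpha,\beta]$; since $[\alpha,\beta]$ is a proper arc and $R$ is an isometry, this forces $n_k\rho\to 0$, and passing to a subsequence I may assume $n_k\rho\to 0^+$ (the case $0^-$ is symmetric). Then the one-sided fiber limits pin the endpoint images: the backward endpoint of $g^{n_k}(A)$ tends to $\alpha^+$ and the forward endpoint to $\beta^+$. If an endpoint of $A$, say $x$, lay in the interior of a nondegenerate fiber (so $x\notin M$), then $g^{n_k}(x)$ would converge to a fiber endpoint $\neq x$, while $A$ contains a nondegenerate subarc of that fiber on the side opposite to the limit; this forces $\limsup_k\dist_H(g^{n_k}(A),A)>0$, a contradiction, so $x,y\in M$, giving~\eqref{rec:first}. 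With $x,y\in M$, the same pinning shows recurrence is possible only when both endpoints have the same type: in the ``opposite type'' configurations (both fibers inside, or both meeting $A$ in one point) the two endpoints require $n_k\rho\to 0$ from opposite sides, so along our one-sided subsequence one endpoint image lands a full fiber-length away from the corresponding endpoint of $A$, again yielding a positive $\limsup_k\dist_H(g^{n_k}(A),A)$. Hence exactly one of $\phi^{-1}(\phi(x)),\phi^{-1}(\phi(y))$ is contained in $A$ and the other meets $A$ in one point, which is~\eqref{rec:second}.

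The main obstacle is this necessity direction: one must pass from bare Hausdorff convergence along an arbitrary sequence to a statement about the one-sided approach of $n_k\rho$ to $0$ (first forcing $n_k\rho\to0$ from the projection, then extracting a one-sided subsequence), and then read off the limiting location of each endpoint image from the one-sided fiber limits, checking that in every forbidden configuration a fixed positive Hausdorff gap survives. Making the one-sided fiber-limit statement precise, including the degenerate-fiber bookkeeping in which a point fiber legitimately plays the role of either endpoint type, and confirming that $g^{n_k}(A)$ is genuinely the oriented arc spanned by the endpoint images (using that it projects onto an arc of full base length $\beta-\alpha$), are the technical points demanding the most care.
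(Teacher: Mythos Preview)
Your approach follows the same arc as the paper: reduce to $g=f^r|_C$, exploit the almost conjugacy $\phi$ to push the problem down to $(\mathbb{S}^1,R)$, and control the Hausdorff limit of $g^{n_k}(A)$ via the one-sided limits of the fibers. The sufficiency direction and the necessity argument for condition~(2) are essentially as in the paper, and your endpoint ``type'' bookkeeping is a clean way to phrase what the paper does with $\alpha=\tfrac12\min\{\diam\phi^{-1}(\phi(x)),\diam\phi^{-1}(\phi(y))\}$.

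There is, however, a genuine gap in the necessity direction. You write ``since $[\alpha,\beta]$ is a proper arc and $R$ is an isometry, this forces $n_k\rho\to 0$'', but you never exclude the possibility that $\phi(A)=\mathbb{S}^1$ while $A\neq C$. This can happen: if $C\setminus A$ is an open subarc of a single nondegenerate fiber $\phi^{-1}(\theta_0)$, then $A$ is a proper subarc of $C$ yet $\phi(A)=\mathbb{S}^1$, and your deduction of $n_k\rho\to 0$ collapses (every $R^{n_k}$ fixes $\mathbb{S}^1$). The paper treats this case separately: since $g$ is surjective it maps fibers onto fibers, so for every $n\ge 1$ one has $C\setminus g^n(A)\subset\phi^{-1}(R^n(\theta_0))$; as $R^n(\theta_0)\neq\theta_0$ and $\diam\phi^{-1}(R^n(\theta_0))\to 0$, it follows that $g^n(A)\supset\phi^{-1}(\theta_0)\supsetneq A$ for all $n$ and $g^n(A)\to C$, contradicting recurrence. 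You need this step before you can assume $\phi(A)$ is a proper arc.

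A smaller point: the assertion that ``$g^{n_k}(A)$ is exactly the oriented arc from $g^{n_k}(x)$ to $g^{n_k}(y)$'' is not literally correct, since $g$ need not be a homeomorphism. What is true (and what you actually need) is that $g$ maps fibers \emph{onto} fibers because $g$ is surjective on $C$, so $g^{n_k}(A)$ is the full preimage $\phi^{-1}$ of the open arc $(\alpha+n_k\rho,\beta+n_k\rho)$ together with the images of the two boundary fibers; its two endpoints lie in $\phi^{-1}(\alpha+n_k\rho)$ and $\phi^{-1}(\beta+n_k\rho)$ and therefore have the same one-sided limits you claim. This fiber-surjectivity is also exactly what makes the $\phi(A)=\mathbb{S}^1$ argument above work, so it is worth stating explicitly.
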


	\begin{proof}
Without loss of generality we may assume that $K$ is connected, i.e. $K=C$ and $r=1$. 

By definition $f(C)=C$ so $C$ is recurrent and so the case $A=C$ is trivial.
	Suppose now that $A\neq C$ satisfies the listed conditions and let us prove that it is recurrent.
	Denote $x_n\coloneqq f^n(x),\ y_n\coloneqq f^n(y).$
	Notice that if $\phi^{-1}(\phi(x))$ is a singleton, then by the definition of the almost conjugacy, $x$ is accumulation point of both sets $M\cap A$ and $M\setminus A$.
		Consider now the case when both $\phi^{-1}(\phi(x))$ and $\phi^{-1}(\phi(y))$ are infinite. Then condition $\phi^{-1}(\phi(x))\subset A$ implies that $x$ is an accumulation point of the set $M\setminus A$
		while $|\phi^{-1}(\phi(y)) \cap A|=1$ implies that $M\cap A$ accumulates at $y$, since otherwise $\phi$ cannot be almost conjugacy.
	The set $M$ is minimal and $\lim_{n\to\infty}\diam \phi^{-1}(\phi(x_n))=0$, so there is subsequence $x_{n_k}$ satisfying $\lim_k(x_{n_k})=x$ and we may assume that $x_{n_k}$ converges to $x$ from the mentioned side (i.e. from outside of $A$, contained in  $M\setminus A$). By definition
 $\lim_k(\phi(y_{n_k}))=\phi(y)$ because $R$ is an isometry and hence $y_{n_k}$ converges to $y$ from the corresponding side in $\mathbb{S}^1$.
	Note that the first condition implies that for each $a\in\mathbb{S}^1,$ $\phi^{-1}(a)\cap A\setminus \{x,y\}\neq\emptyset$ implies $\phi^{-1}(a)\subset A$, as otherwise there would be other boundary points of $A$ in $C.$  Since by definition $\phi(x_{n_k})\not\in \phi(A)$ for each $k$, $\phi(y_{n_k})\in \phi(A)$ %{\color{green} for almost all $k$?} 
		and therefore $\phi^{-1}(\phi(y_{n_k}))\subset A$.
	But $\lim_{n\to\infty}\diam \phi^{-1}(\phi(y_n))=0$ and therefore $\lim_k(y_{n_k})=y$. This shows that $\lim_{k\to \infty}\diam f^{n_k}(A)\setminus A=0$ and $\lim_{k\to \infty}\diam A\setminus f^{n_k}(A)=0$ which implies
	$\lim(f^{n_k}(A))=A$ in $(C(G),\dist_H)$ (see Figure~\ref{fig:ii}).
	
	% 	This is preserved for all iterates of $A$, i.e. $x_n,y_n$ are the only boundary points of $f^n(A)$ in relative topology of $C$ and hence $\lim(f^{n_k}(A))=A$ in $(C(G),\dist_H)$.
	
To prove the opposite direction, suppose $A\subset C$ is a recurrent continuum.
	If $A=C$ then we are done so we may assume $A\neq C.$

	First, let us show that $\varphi(A)\neq\mathbb{S}^1.$
	Otherwise, $C\setminus A$ consists of subsets of finitely many fibers of $\varphi.$
	But then $A$ contains almost whole backward orbits of those fibers and hence they are contained in all but finitely many forward images of $A,$ contradicting recurrence.
	 Let $x,y$ be two boundary point of $A$, such that $\phi(x),\phi(y)$ are boundary point of the interval $\phi(A)$.

			Suppose that the first condition does not hold.
			Then there is another boundary point $z$ of $A$ in $C$ besides $x$ and $y.$ 			
			Note that $\phi(A)$ is an interval and so $\phi^{-1}(\mathbb{S}^1\setminus\phi(A))$ is connected as well.
		Note that there is a sequence of points $z_n\in C\setminus A$ such that $\lim_{n\to \infty}z_n=z$.
		It is impossible that $\phi^{-1}(\phi(z))=\{z\}$ because then $\phi(z_n)\in \mathbb{S}^1\setminus \phi(A)$ for all but finitely many $n$ and
		as a consequence $\phi(z)\in \{\phi(x),\phi(y)\}$ which is impossible. 	
			Therefore, the only possibility is that there is $a_0\in \mathbb{S}^1$ such that $\phi^{-1}(a_0)\cap A\setminus\{x,y\}\neq\emptyset$ but $\phi^{-1}(a_0)\not\subset A.$ 	This can happen only for finitely many $a\in \mathbb{S}^1$ and none of them is periodic and $f(\phi^{-1}(a))=\phi^{-1}(R(a))$ for all $a\in \mathbb{S}^1$~\cite[Definition 3.3, Theorem 4.1]{Shao},
	and for all other $b\in \mathbb{S}^1$ we have either $\phi^{-1}(b)\subset A$ or $\phi^{-1}(b)\cap A=\emptyset$.
	This implies that $\phi^{-1}(a_0)\subset f^n(A),$ or $\phi^{-1}(a_0)\cap f^n(A)=\emptyset$ for all but finitely many $n$.
  If $\Int \phi^{-1}(a_0)\cap A\neq \emptyset$, i.e. $\phi^{-1}(a_0)\cap A$ is infinite, it is contradicting the recurrence of $A$.
	The same argument excludes the case $a_0\in \Int \phi(A)$.
So the only possibility is that $\phi^{-1}(a_0)\cap A$ is finite and $\phi^{-1}(a_0)\cap \{x,y\}\neq \emptyset$.
Assume that $x,z\in \phi^{-1}(a_0)$ for some $z\in A$.
Such a situation cannot happen, because if $\Int \phi^{-1}(a_0)\cap A=\emptyset$, then there are points $\lim_n(x_{n})=x$, $x_n\in A$ and with singleton fibers
and so $B(z,\delta)\cap A=\{z\}$ for sufficiently small $\delta>0$ which is impossible.
Indeed (1) holds, and also we easily get that $x,y\in M$ by the above argument.

		From what is already proved, we have that for each $a\in \mathbb{S}^1$ and each $k\geq 0,$ $\phi^{-1}(a)\subset f^k(A)$ or $\vert\phi^{-1}(a)\cap f^k(A)\vert\leq 1.$
	Now suppose that the second assertion does not hold.
	Then exactly one of the following two cases emerges:
\begin{enumerate}[(1)]
	\item\label{caseone} $\phi^{-1}(\phi(x))\not\subset A$ and $\phi^{-1}(\phi(y))\not\subset A$ or
	\item\label{casetwo} both $\phi^{-1}(\phi(x))$ and $\phi^{-1}(\phi(y))$ are infinite subsets of $A.$
\end{enumerate}
	Put $\alpha=\frac{1}{2}\min\{\diam \phi^{-1}(\phi(x)),\diam \phi^{-1}(\phi(y))\}$ and note that $\alpha>0$.
		Since $A$ is recurrent, there is $n\geq 1$ such that $\dist_H(A,f^n(A))<\alpha$ %\frac{1}{2}.$
	By Remark~\ref{rem:ambient}, if we had \ref{caseone} then also $\phi^{-1}(\phi(x))\not\subset f^n(A)$ and $\phi^{-1}(\phi(y))\not\subset f^n(A)$.
	If we had \ref{casetwo} then both $\phi^{-1}(\phi(x))$ and $\phi^{-1}(\phi(y))$ are subsets of $f^n(A).$
	Since $f^n(A)$ is connected subset of $C$, $d_H(f^n(A),A)<\alpha$ %\frac{1}{2}$ 
	and since obviously $A\neq f^n(A)$ then either $A\subsetneq f^n(A)$ or $f^n(A)\subsetneq A,$ both yielding contradiction with recurrence.
	\begin{figure}[H]
\centering
     \begin{subfigure}[t]{0.45\textwidth}
         \centering
         \includegraphics[scale=1.4]{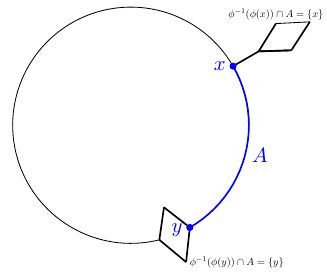}
         \caption{}
         \label{fig:i}
     \end{subfigure}
     \hfill
     \begin{subfigure}[t]{0.45\textwidth}
         \centering
         \includegraphics[scale=1.4]{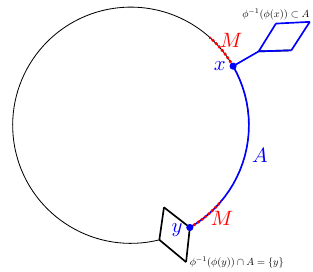}
         \caption{}
         \label{fig:ii}
     \end{subfigure}
     
     \caption{Figure \ref{fig:i} illustrates the case when $A$ fulfills condition \ref{rec:first} but it falls into case \ref{caseone} from the proof and hence is not recurrent. Figure \ref{fig:ii} illustrates the case when $A$ is recurrent.
}\label{fig:recurrent_error}
	\end{figure}
	
\end{proof}

 Let $M=E(K,f)$ be circumferential set. Fix a component $C$ of $K$  with period $r$ and let
 $\phi\colon (C,f^r)\to (\mathbb{S}^1,R)$ be an almost conjugacy with an irrational rotation $R$.
 Denote by $\mathcal{R}_C\subset C(G)$ the set consisting of $C$ and subcontinua $A$ of $C$ 
 which satisfy the following two conditions:
\begin{enumerate}
 	\item $A$ has exactly two boundary points $x,y$ in the relative topology of $C$ and $x,y\in M$
 	\item $\phi^{-1}(\phi(x))\subset A$ and $\vert\phi^{-1}(\phi(y))\cap A\vert=1$ or vice-versa.
\end{enumerate}
Denote by $\mathcal R$ the union of all $\mathcal{R}_C$ over all circumferential sets $M$ and related components $C$.
Under this notation we have the following.
\begin{theorem}\label{thm:rec-cont}
Let $G$ be a topological graph and let $f$ be a graph map on $G$.
	Then 
\begin{equation}
\Rec(\tilde{f})=\Per(\tilde{f})\cup \{\{x\}\colon x\in \Rec(f)\}\cup \mathcal{R}.\label{eq:recAdescription}
\end{equation}
\end{theorem}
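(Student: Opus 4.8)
The plan is to prove the two inclusions of \eqref{eq:recAdescription} separately. The inclusion $\supseteq$ is routine: periodic continua are recurrent, so $\Per(\tilde f)\subseteq\Rec(\tilde f)$; for a singleton $\{x\}$ with $x\in\Rec(f)$, any sequence $n_k$ with $f^{n_k}(x)\to x$ gives $\dist_H(\tilde f^{n_k}(\{x\}),\{x\})=\dist(f^{n_k}(x),x)\to 0$, whence $\{x\}\in\Rec(\tilde f)$; and every element of $\mathcal R$ belongs to some $\mathcal R_C$ and is recurrent by the sufficiency direction of Lemma~\ref{lem:recurrent:circ}.

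For the reverse inclusion $\subseteq$, I would take an arbitrary $A\in\Rec(\tilde f)$ and split on whether $A$ is degenerate. If $A=\{x\}$, then $\tilde f^{n_k}(\{x\})\to\{x\}$ unwinds to $f^{n_k}(x)\to x$, so $x\in\Rec(f)$ and $A$ lies in the second set. If $A$ is nondegenerate, then for each $x\in A$ I fix a maximal $\omega$-limit set $M_x\supseteq\omega_f(x)$ and apply the dichotomy governing Lemma~\ref{lem:periodic}: should some $M_x$ fail to be circumferential, or should $M_x\neq M_y$ for some $x,y\in A$, that lemma directly yields $A\in\Per(\tilde f)$.

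The complementary case is that every $M_x$ is circumferential and all of them coincide. Because circumferential sets are minimal and each $\omega_f(x)$ is a nonempty closed invariant subset, $\omega_f(x)$ must equal the common circumferential set $M=S(K,f)$ for every $x\in A$. Lemma~\ref{lem:contained-cir} then puts $A\subseteq K$, and since the components of the cycle of graphs $K$ are pairwise disjoint while $A$ is connected, $A$ sits inside a single component $C$ of period $r$. The characterization of Lemma~\ref{lem:recurrent:circ}, applied to the nondegenerate recurrent continuum $A\subseteq C$, then forces either $A=C$---a periodic point of $\tilde f$ since $f$ cyclically permutes the components of $K$---or $A$ to satisfy conditions \eqref{rec:first}--\eqref{rec:second}, i.e.\ $A\in\mathcal R_C\subseteq\mathcal R$. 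This exhausts every case.

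Essentially all the difficulty has been front-loaded into the lemmas, so the theorem itself is a matter of assembling them correctly; the one point demanding care is verifying that the dichotomy above is genuinely exhaustive and that the ``circumferential and equal'' branch really identifies $\omega_f(x)$ with a single $M$ for all $x$, as it is exactly this identification that makes Lemma~\ref{lem:contained-cir} and then Lemma~\ref{lem:recurrent:circ} applicable. I anticipate no obstacle beyond this organizational bookkeeping.
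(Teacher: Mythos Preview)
Your proposal is correct and follows essentially the same approach as the paper's own proof: both handle the degenerate case trivially, invoke Lemma~\ref{lem:periodic} for the non-circumferential/unequal branch, and then use Lemma~\ref{lem:contained-cir} followed by Lemma~\ref{lem:recurrent:circ} for the remaining case. Your write-up in fact supplies a bit more detail than the paper (the minimality argument forcing $\omega_f(x)=M$, and the observation that $A$ lands in a single component $C$), but the logical skeleton is identical.
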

	\begin{proof}
For each $x\in G$ let $M_x\supset \omega_f(x)$ be a maximal $\omega$-limit set. 
	Let $A\in\Rec(\tilde{f})$ be a nondegenerate continuum.
	If $M_x$ is not circumferential set for some $x\in A$
	or there are $x,y\in A$ such that $M_x\neq M_y$ then $A\in\Per(\tilde{f})$ by Lemma~\ref{lem:periodic}.
	If there is circumferential set $M$ such that $\omega_f(x)=M$ for all $x\in A$ then $A\in R$ by Lemmas~\ref{lem:contained-cir} and~\ref{lem:recurrent:circ}.
	If $A=\{x\}$ then $x\in\Rec(f).$ This proves inclusion $\subset$ in \eqref{eq:recAdescription}.

Observe that $R\subset \Rec(\tilde{f})$ by Lemma~\ref{lem:recurrent:circ} and now the converse inclusion in \eqref{eq:recAdescription} follows trivially.
\end{proof}

\section{Proof of Theorem~\ref{tm:entropy}}\label{sec:proof}

	In this section we use the results obtained in Section~\ref{sec:characterization}, namely Theorem~\ref{thm:rec-cont}, to prove the main theorem of the paper.
	We begin with a special case of a graph map without periodic points.

\begin{lemma}~\label{lem:circ}
	If $E(K,f)$ is a circumferential set then $(C(K),\tilde f)$ has zero topological entropy.
\end{lemma}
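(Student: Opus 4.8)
The plan is to reduce to a single circle-like component, transport the hyperspace dynamics down to the induced rotation on $C(\mathbb{S}^1)$, and control the fibres of that factor map by Bowen's fibre-entropy inequality. For the reduction, recall that $K$ is a cycle of graphs, so its components $C_0,\dots,C_{r-1}$ are \emph{pairwise disjoint} subgraphs which $f$ permutes cyclically; being finitely many disjoint closed sets, they are clopen in $K$, hence every continuum lies in a single component and $C(K)=\bigsqcup_i C(C_i)$. Then $\tilde f$ permutes these clopen pieces cyclically, so $\htop(\tilde f)=\tfrac1r\htop(\widetilde{f^r}\vert_{C(C_0)})$, and it suffices to prove $\htop(\widetilde g)=0$ for $g=f^r\vert_C$ on one component $C$, where by hypothesis there is an almost conjugacy $\phi\colon(C,g)\to(\mathbb{S}^1,R)$ onto an irrational rotation.

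Next I would pass to the circle. The induced map $\tilde\phi\colon C(C)\to C(\mathbb{S}^1)$, $\tilde\phi(A)=\phi(A)$, is a continuous surjection (preimages of subarcs under the monotone map $\phi$ are subcontinua) and satisfies $\tilde\phi\circ\widetilde g=\tilde R\circ\tilde\phi$, so it is a semiconjugacy. Since $R$ is an isometry of $\mathbb{S}^1$, the induced map $\tilde R$ is an isometry of $(C(\mathbb{S}^1),\dist_H)$; its Bowen metrics then coincide with $\dist_H$, so $\htop(\tilde R)=0$. By the fibre-entropy inequality of Bowen \cite{Bow71},
\[
\htop(\widetilde g)\le \htop(\tilde R)+\sup_{W\in C(\mathbb{S}^1)}\htop\!\big(\widetilde g,\tilde\phi^{-1}(W)\big),
\]
so the whole statement reduces to showing that these fibre entropies vanish, uniformly in $W$.

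The heart of the argument is the estimate on the fibres. Fix a proper subarc $W=[a,b]$ (the degenerate case and the case $W=\mathbb{S}^1$, where the fibre is $\{C\}$, are covered by the same bound). Because $\phi$ has connected fibres, all but countably many of which are singletons, a continuum $A$ with $\phi(A)=[a,b]$ is forced by connectedness to contain in full every fibre over the interior $(a,b)$; the only freedom is, over each endpoint, the choice of a subcontinuum of $\phi^{-1}(a)$ and of $\phi^{-1}(b)$ containing the endpoint adjacent to the interior. Hence for two elements $A,A'$ of one fibre the continua $g^iA$ and $g^iA'$ agree except within the end fibres $\phi^{-1}(R^ia)$ and $\phi^{-1}(R^ib)$, so $\dist_H(g^iA,g^iA')\le\max\{\diam\phi^{-1}(R^ia),\diam\phi^{-1}(R^ib)\}$. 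The nondegenerate $\phi$-fibres are pairwise disjoint subarcs of $C$, so their lengths are summable and only finitely many, say $N(\epsilon)$ of them, have diameter exceeding $\epsilon$; as $R$ is aperiodic, the forward orbit of $a$ (and of $b$) meets each of their base points at most once. Thus there are at most $2N(\epsilon)$ times $i$, \emph{independently of $n$}, at which two elements of the fibre can be $\epsilon$-apart, and at each such time they are distinguished only through a subarc of one end fibre, that is, a point of a space of $\dist_H$-diameter at most $\diam C$, which admits an $\epsilon$-net of at most $\diam C/\epsilon+1$ elements. Consequently the maximal cardinality of an $(n,\epsilon)$-separated subset of any fibre is bounded by a constant depending only on $\epsilon$, giving $\htop(\widetilde g,\tilde\phi^{-1}(W))=0$ for every $W$. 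Substituting into Bowen's inequality yields $\htop(\widetilde g)=0$, and therefore $\htop(\tilde f)=0$.

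The step I expect to be the main obstacle is precisely this last one: first pinning down the description of the fibres of $\tilde\phi$ (that a continuum over $[a,b]$ is rigid over the interior and free only at the two end fibres), and then extracting, from summability of fibre lengths together with aperiodicity of $R$, a bound on the number of ``separating times'' that is uniform in $n$ and in $W$. Everything else — the reduction via disjointness of the components and the isometry property on $C(\mathbb{S}^1)$ — is routine.
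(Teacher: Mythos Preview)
Your approach is sound in outline and takes a genuinely different route from the paper. Both proofs reduce to one component $C$, use the induced semiconjugacy $\tilde\phi\colon C(C)\to C(\mathbb{S}^1)$, and invoke $\htop(\tilde R)=0$; but the paper does \emph{not} estimate fibre entropy. Instead it appeals to the characterization of recurrent subcontinua of $C$ already established in Lemma~\ref{lem:recurrent:circ}: over each $W\in C(\mathbb{S}^1)$ there are only finitely many \emph{recurrent} $A$ with $\phi(A)=W$ (uniformly in $W$), so $\tilde\phi$ restricted to the recurrent set is uniformly finite-to-one and Bowen's bounded-fibre theorem gives $\htop(\tilde f)\le\htop(\tilde R)=0$. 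Your route avoids Lemma~\ref{lem:recurrent:circ} entirely and is in that sense more self-contained, but it pays by having to control every fibre of $\tilde\phi$, not only the recurrent continua in it.

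The step you yourself flag is where a genuine gap sits. The assertion that any continuum $A$ with $\phi(A)=[a,b]$ must contain $\phi^{-1}\big((a,b)\big)$ in full is correct when $C$ is a topological circle (then $A$ is a subarc and each interior fibre separates its two ends), but a component $C$ of the minimal cycle $K$ is in general only a graph almost conjugate to a rotation. The almost-conjugacy conditions give $\partial\phi^{-1}(c)=\phi^{-1}(c)\cap M$ with at most two points; nothing forbids a nondegenerate fibre with $|\partial\phi^{-1}(c)|=1$, i.e.\ a subgraph attached at a single point, and such a fibre does not separate $C$, so $A$ can omit it while remaining connected with $\phi(A)=[a,b]$. (The paper's own proof of Lemma~\ref{lem:recurrent:circ} explicitly treats the situation $\phi^{-1}(a_0)\not\subset A$ for interior $a_0$, so it does not rely on the rigidity you assume.) Once the fibre $\tilde\phi^{-1}([a,b])$ carries this extra interior freedom, your key inequality $\dist_H(g^iA,g^iA')\le\max\{\diam\phi^{-1}(R^ia),\diam\phi^{-1}(R^ib)\}$ no longer follows, and with it the ``at most $2N(\epsilon)$ separating times'' bound. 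The strategy can plausibly be repaired --- the extra freedom still lives in the countably many nondegenerate fibres, whose forward images under $g$ have summable diameters --- but it needs more careful bookkeeping than the two-endpoint picture you sketch, and as written the argument is incomplete at exactly this point.
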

\begin{proof}
	%{\color{blue}It can be known in the literature}
	
	%Without loss of generality we may assume that $C
	Denote by $M=E(K,f)$ the unique minimal set contained in $K$. 
%	{\color{green} Denote $M=E(K,f)=S(K,f).$
%		Note that $M$ is the unique minimal set contained in $K$. (Maybe this formulation is better since we already started with this set in the statement)}
	Assume first that $K$ is connected.
	%Then there is a unique minimal set $M\subset C$ and 
	By\cite{B1} there is a semiconjugacy $\phi\colon (K,f)\to (\mathbb{S}^1,R)$ with the irrational rotation $R$ which is almost conjugacy, that is $\phi^{-1}(x)$
	is connected for every $x$ and $\phi^{-1}(x)\cap M$ has at most $2$ points, and is a singleton for all but countably many points $x$.
	
	Let $A\subset K$ be a continuum. It is clear that if $A$ is recurrent then $\phi(A)$ is recurrent. We know that $(C(\mathbb{S}^1),\tilde R)$ has zero topological entropy (e.g. see \cite[Corollary~30]{KOCSF}).
	Note that since $\phi$ is a monotone map, for every connected set $A\subset K$ we have that $Q=\phi^{-1}(\phi(A))$ is a connected set, and $f(Q)\subset \phi^{-1}(R\circ \phi)(Q)$. 
	It is also clear that if $Q\neq A$ and there is $a\in \mathbb{S}^1$ such that $\phi^{-1}(a)\not\subset A$ but $\phi^{-1}(a)\cap A$ is infinite then $A$ cannot be recurrent (we must have $\phi^{-1}(a)\cap A\subset \partial \phi^{-1}(a)$), {see Lemma~\ref{lem:recurrent:circ}}
	Then any recurrent continuum in $C(\mathbb{S}^1)$ is covered by finitely many recurrent continua in $C(K)$. This means that induced semi-conjugacy $\tilde \phi \colon C(K)\to C(\mathbb{S}^1)$ does not decrease entropy (see \cite{Bow71}), completing the proof.
	
	If $K$ has $s$ connected components, then it is enough to consider $f^s$ which fixes each of the components and then apply previous case. Clearly in this case $C(K)$
	splits into $s$ pairwise disjoint sets, and so the result follows.
\end{proof}

\begin{proof}[Proof of Theorem~\ref{tm:entropy}]
	By the Variational Principle, topological entropy of a dynamical system equals to the supremum of metric entropy over all ergodic measures\cite{Walters}, so in particular:
	\[
	\htop(\tilde{f})=\sup_{A\in \Rec(\tilde{f})}\htop(\tilde{f}_{\omega(A)}).
	\] 
	Since we may view $f$ as a subsystem of $\tilde f$ acting on the space of all degenerate continua, we have $\htop(f)\leq \htop(\tilde{f})$. We have to show that the converse inequality holds.
	Fix any $A\in \Rec(\tilde{f})$. By the above, we may also assume that $A$ is not degenerate.
	We have the following cases:
	\begin{enumerate}
		\item The set $A$ satisfies assumptions of Lemma~\ref{lem:periodic}, then $A$ is a periodic point of $\tilde f$ and as a consequence $\htop(\tilde{f}|_{\omega(A)})=0.$
		\item The set $A$ does not satisfy assumptions of Lemma~\ref{lem:periodic}. Then there is a cycle of graphs $K$ such that $E(K,f)$ is a circumferential set and $\omega_f(x)= E(K,f)$ for each $x\in A$, because $E(K,f)$ is  minimal. Lemma~\ref{lem:contained-cir} yields that $A\subset K$.
		Then the result follows by Lemma~\ref{lem:circ}.
	\end{enumerate}
	The proof is complete.
\end{proof}

\section*{Acknowledgements}
Research of P. Oprocha was supported by National Science Centre, Poland (NCN), grant no. 2019/35/B/ST1/02239.

\bibliographystyle{amsplain}
\bibliography{EntropyContinua_06}
\end{document}